\theoremstyle{plain}
\newtheorem{thm}{Theorem}[section]
\newtheorem{cor}[thm]{Corollary}
\newtheorem{lem}[thm]{Lemma}
\newtheorem{prop}[thm]{Proposition}
\newtheorem{claim}[thm]{Claim}
\theoremstyle{definition}
\newtheorem{defi}[thm]{Definition}
\theoremstyle{remark}
\newtheorem{rem}[thm]{Remark}
\numberwithin{equation}{section}
\newcommand{\average}{{\mathchoice {\kern1ex\vcenter{\hrule height.4pt
width 6pt depth0pt} \kern-9.7pt} {\kern1ex\vcenter{\hrule
height.4pt width 4.3pt depth0pt} \kern-7pt} {} {} }}
\def\R{\mathbb{R}}
\begin{document}

\title[The Pohozaev identity for the fractional Laplacian]{The Pohozaev identity for the fractional Laplacian}

\author{Xavier Ros-Oton}

\address{Universitat Polit\`ecnica de Catalunya, Departament de Matem\`{a}tica  Aplicada I, Diagonal 647, 08028 Barcelona, Spain}
\email{xavier.ros.oton@upc.edu}

\thanks{The authors were supported by grants MTM2008-06349-C03-01, MTM2011-27739-C04-01 (Spain), and 2009SGR345 (Catalunya)}

\author{Joaquim Serra}

\address{Universitat Polit\`ecnica de Catalunya, Departament de Matem\`{a}tica  Aplicada I, Diagonal 647, 08028 Barcelona, Spain}

\email{joaquim.serra@upc.edu}

\keywords{Fractional Laplacian, Pohozaev identity, semilinear problem}

\maketitle

\begin{abstract} In this paper we prove the Pohozaev identity for the semilinear Dirichlet
problem
$(-\Delta)^s u =f(u)$ in $\Omega$,
$u\equiv0$ in $\mathbb R^n\backslash\Omega$.
Here, $s\in(0,1)$, $(-\Delta)^s$ is the fractional Laplacian in $\R^n$, and $\Omega$ is a bounded $C^{1,1}$ domain.

To establish the identity we use, among other things, that if $u$ is a bounded solution then $u/\delta^s|_{\Omega}$ is $C^{\alpha}$ up to the boundary $\partial \Omega$, where $\delta(x)={\rm dist}(x,\partial\Omega)$.
In the fractional Pohozaev identity, the function $u/\delta^s|_{\partial\Omega}$ plays the role that $\partial u/\partial\nu$ plays in the classical one.
Surprisingly, from a nonlocal problem we obtain an identity with a boundary term (an integral over $\partial\Omega$) which is completely local.

As an application of our identity, we deduce the nonexistence of nontrivial solutions in star-shaped domains for supercritical nonlinearities.
\end{abstract}


\section{Introduction and results}

Let $s\in(0,1)$ and consider the fractional elliptic problem
\begin{equation}\label{eq}
\left\{ \begin{array}{rcll} (-\Delta)^s u &=&f(u)&\textrm{in }\Omega \\
u&=&0&\textrm{in }\mathbb R^n\backslash\Omega\end{array}\right.
\end{equation}
in a bounded domain $\Omega\subset\mathbb R^n$, where
\begin{equation}
\label{laps}(-\Delta)^s u (x)= c_{n,s}{\rm PV}\int_{\R^n}\frac{u(x)-u(y)}{|x-y|^{n+2s}}dy
\end{equation}
is the fractional Laplacian. Here, $c_{n,s}$ is a normalization constant given by \eqref{cns}.

When $s=1$, a celebrated result of S. I. Pohozaev states that any solution of \eqref{eq} satisfies an identity, which is known as the Pohozaev identity \cite{P}. This classical result has many consequences, the most immediate one being the nonexistence of nontrivial bounded solutions to \eqref{eq} for supercritical nonlinearities $f$.

The aim of this paper is to give the fractional version of this identity, that is, to prove the Pohozaev identity for problem \eqref{eq} with $s\in(0,1)$.
This is the main result of the paper, and it reads as follows.
Here, since the solution $u$ is bounded, the notions of weak and viscosity solutions agree (see Remark \ref{boundedsol}).

\begin{thm}\label{thpoh} Let $\Omega$ be a bounded and $C^{1,1}$ domain, $f$ be a locally Lipschitz function, $u$ be a bounded solution of (\ref{eq}), and
\[\delta(x)={\rm dist}(x,\partial\Omega).\]
Then,
\[u/\delta^s|_{\Omega}\in C^{\alpha}(\overline\Omega)\qquad \textrm{for some }\ \alpha\in(0,1),\]
meaning that $u/\delta^s|_{\Omega}$ has a continuous extension to $\overline\Omega$ which is
$C^{\alpha}(\overline{\Omega})$, and the following identity holds
\[(2s-n)\int_{\Omega}uf(u)dx+2n\int_\Omega F(u)dx=\Gamma(1+s)^2\int_{\partial\Omega}\left(\frac{u}{\delta^s}\right)^2(x\cdot\nu)d\sigma,\]
where $F(t)=\int_0^tf$, $\nu$ is the unit outward normal to $\partial\Omega$ at $x$, and $\Gamma$ is the Gamma function.
\end{thm}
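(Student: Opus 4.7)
My plan is to mimic the classical Pohozaev argument with $x \cdot \nabla u$ as a multiplier. Interior elliptic regularity (bootstrapped from $f$ locally Lipschitz and $u$ bounded) gives $u \in C^\infty(\Omega)$, so $\nabla u$ is pointwise defined inside $\Omega$ although it blows up like $\delta^{s-1}$ near $\partial\Omega$. The local half of the identity is immediate: since $F(0)=0$ and $\delta^{s-1}$ is integrable, the ordinary divergence theorem yields
\[
\int_\Omega (x\cdot\nabla u)\, f(u)\, dx \;=\; \int_\Omega x\cdot\nabla F(u)\, dx \;=\; -n \int_\Omega F(u)\, dx.
\]
Hence the theorem reduces to the nonlocal integration-by-parts identity
\[
\int_\Omega (x\cdot\nabla u)\,(-\Delta)^s u\, dx \;=\; \tfrac{2s-n}{2}\int_\Omega u\,f(u)\, dx \;-\; \tfrac{\Gamma(1+s)^2}{2}\int_{\partial\Omega}\Bigl(\tfrac{u}{\delta^s}\Bigr)^{\!2}(x\cdot\nu)\, d\sigma.
\]

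To establish this, I would introduce the dilation family $u_\lambda(x):=u(\lambda x)$ and the auxiliary quantity
\[
I(\lambda) := \int_{\R^n} u_\lambda(x)\,(-\Delta)^s u(x)\, dx.
\]
A change of variables combined with the self-adjointness of $(-\Delta)^s$ on $H^s(\R^n)$ (applicable since $u \in H^s$, using that $u/\delta^s \in L^\infty$ and $u \in C^s$) gives the exact scaling law $I(\lambda) = \lambda^{2s-n} I(1/\lambda)$ for every $\lambda > 0$. Interpreting this at $\lambda=1$ via one-sided derivatives yields the rigid relation
\[
I'(1^+) + I'(1^-) \;=\; (2s-n)\, I(1) \;=\; (2s-n)\int_\Omega u\,f(u)\, dx,
\]
so it suffices to compute the two one-sided derivatives directly from the definition of $I$; the asymmetry between $\lambda>1$ and $\lambda<1$ is what will generate the boundary term.

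For $\lambda>1$, $\operatorname{supp} u_\lambda = \lambda^{-1}\overline\Omega\Subset\Omega$, so $I(\lambda) = \int_\Omega u_\lambda f(u)\, dx$ is smooth in $\lambda$ and $I'(1^+) = \int_\Omega (x\cdot\nabla u)\, f(u)\, dx = -n\int F(u)\, dx$. For $\lambda<1$, $u_\lambda$ leaks into a shell $\lambda^{-1}\Omega\setminus\Omega$ of thickness $\sim (1-\lambda)$ outside $\partial\Omega$, and differentiating under the integral gives
\[
I'(\lambda) = \int_\Omega (x\cdot\nabla u)(\lambda x)\, f(u(x))\, dx \;+\; J(\lambda),
\]
where
\[
J(\lambda) := \int_{\lambda^{-1}\Omega\setminus\Omega}(x\cdot\nabla u)(\lambda x)\,(-\Delta)^s u(x)\, dx.
\]
The first term converges to $-n\int F$ as $\lambda \to 1^-$ by dominated convergence (via integrability of $\delta^{s-1}$). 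The shell integral $J(\lambda)$ is the source of the boundary contribution: $|\nabla u(\lambda x)| \sim (1-\lambda)^{s-1}$ from inside matches $|(-\Delta)^s u(x)|\sim (1-\lambda)^{-s}$ from outside to produce a nontrivial $O(1)$ limit. Localizing at each point of $\partial\Omega$, flattening the boundary via the $C^{1,1}$ hypothesis, and invoking the $C^\alpha(\overline\Omega)$ regularity of $u/\delta^s$ to replace it with its boundary trace, reduces the limit to a universal one-dimensional calculation against the profile $x_+^s$, using the explicit formula for $(-\Delta)^s(x_+^s)$ on the negative axis. Beta-function identities and Euler's reflection formula assemble the resulting constant as exactly $\Gamma(1+s)^2$, yielding
\[
\lim_{\lambda \to 1^-} J(\lambda) \;=\; \Gamma(1+s)^2 \int_{\partial\Omega}(u/\delta^s)^2\,(x\cdot\nu)\, d\sigma.
\]
Substituting $I'(1^+)=-n\int F$ and $I'(1^-) = -n\int F + \lim J(\lambda)$ into the scaling relation $I'(1^+)+I'(1^-)=(2s-n)\int uf$ delivers the theorem.

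The main obstacle is the rigorous evaluation of the shell integral: the cancellation between two singular factors must be controlled uniformly, which is where the $C^\alpha(\overline\Omega)$ regularity of $u/\delta^s$ proved earlier in the paper is essential. Everything else — interior smoothness of $u$, self-adjointness of $(-\Delta)^s$, the scaling identity, and the local integration by parts for $F(u)$ — is more routine, though the boundary flattening and the explicit one-dimensional computation producing the constant $\Gamma(1+s)^2$ will demand careful bookkeeping.
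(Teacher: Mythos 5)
Your strategy shares the paper's central idea (differentiate $\int u_\lambda(-\Delta)^s u$ in the dilation parameter at $\lambda=1$), but the route you take from there is genuinely different. The paper passes to $w=(-\Delta)^{s/2}u$, writes $\int u_\lambda(-\Delta)^s u=\lambda^{(2s-n)/2}\int w_{\sqrt\lambda}w_{1/\sqrt\lambda}$, and evaluates only the one-sided derivative at $\lambda=1^+$; the boundary term then comes from the \emph{logarithmic} singularity of $(-\Delta)^{s/2}u$ along $\partial\Omega$ (Proposition \ref{thlaps/2}) together with a 1D computation built on log-profiles and hypergeometric identities (Proposition \ref{propoperador} and the Appendix). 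You instead stay with $(-\Delta)^s u$, use the scaling relation $I(\lambda)=\lambda^{2s-n}I(1/\lambda)$ to reduce to $I'(1^+)+I'(1^-)=(2s-n)I(1)$, and extract the boundary term from the shell integral $J(\lambda)$ appearing in $I'(1^-)$, where the \emph{algebraic} singularities $|\nabla u(\lambda x)|\sim\delta(\lambda x)^{s-1}$ (from inside) and $|(-\Delta)^s u(x)|\sim\delta(x)^{-s}$ (from outside) pair off through a Beta-function integral. Your observation that the one-sided derivatives are related rigidly by scaling ($I'(1^+)+I'(1^-)=(2s-n)I(1)$, so that $\lim J=I'(1^-)-I'(1^+)$) is exactly equivalent to the symmetry $g(\mu)=g(1/\mu)$ that the paper encodes via $\int w_{\sqrt\lambda}w_{1/\sqrt\lambda}$, but it is a cleaner way to organize the computation and it leads to a noticeably simpler model calculation: the constant $\Gamma(1+s)^2$ drops out of $s\,\frac{\Gamma(1+s)}{\Gamma(1-s)}\,B(1-s,s)$ rather than from $c_1^2(\pi^2+c_2^2)$ with the hypergeometric evaluation of $c_2$.

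There are, however, two substantive gaps that you would need to fill. First, the argument as written requires $\Omega$ to be star-shaped with respect to the origin so that $\lambda^{-1}\Omega\subset\Omega$ for $\lambda>1$ (giving the clean formula $I(\lambda)=\int_\Omega u_\lambda f(u)$) and $\lambda^{-1}\Omega\supset\Omega$ for $\lambda<1$ (giving a well-defined shell). Theorem \ref{thpoh} is stated for arbitrary bounded $C^{1,1}$ domains, and the passage from star-shaped to general domains is nontrivial: the paper devotes Section \ref{sec8} to a bilinearized identity combined with a partition of unity (Lemmas \ref{duesboles} and \ref{unabola}). Your proposal does not mention this step. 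Second, and more seriously, the rigorous evaluation of $\lim_{\lambda\to 1^-}J(\lambda)$ requires a precise, \emph{uniform} asymptotic expansion of $(-\Delta)^s u(x)$ for $x$ just outside $\partial\Omega$, of the form $(-\Delta)^s u(x)=-\tfrac{\Gamma(1+s)}{\Gamma(1-s)}\,(u/\delta^s)(x^*)\,\delta(x)^{-s}+(\text{lower order})$, where $x^*$ is the boundary projection; this is a nonlocal operator applied to $u$, so the leading singular coefficient is not an immediate consequence of $u/\delta^s\in C^\alpha(\overline\Omega)$. You would need an analogue of the paper's Propositions \ref{proplaps2} and \ref{prop:Lap-s/2-delta-s} for $(-\Delta)^s$ rather than $(-\Delta)^{s/2}$ (likely easier, since the singularity is algebraic rather than logarithmic, but still a real estimate), together with enough uniformity in $x_0\in\partial\Omega$ to pass to the limit. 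Calling this "the main obstacle" is accurate, but the proposal sketches rather than carries out precisely the part of the argument where all the analytic difficulty lives. Minor points: $f$ merely Lipschitz gives $u\in C^\beta_{\rm loc}(\Omega)$ for $\beta<1+2s$ but not $C^\infty$ (this is harmless, as you only need $\nabla u$); and the existence of $I'(1^-)$ as a limit, which you use in the scaling relation, is itself something to be established rather than assumed.
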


Note that in the fractional case the function $u/\delta^s|_{\partial\Omega}$ plays the role that $\partial u/\partial\nu$ plays in the classical Pohozaev identity.
Moreover, if one sets $s=1$ in the above identity one recovers the classical one, since $u/\delta|_{\partial\Omega}=\partial u/\partial\nu$ and $\Gamma(2)=1$.

It is quite surprising that from a nonlocal problem \eqref{eq} we obtain a completely local boundary term in the Pohozaev identity. That is, although the function $u$ has to be defined in all $\R^n$ in order to compute its fractional Laplacian at a given point, knowing $u$ only in a neighborhood of the boundary we can already compute $\int_{\partial\Omega}\left(\frac{u}{\delta^s}\right)^2(x\cdot\nu)d\sigma$.

Recall that problem \eqref{eq} has an equivalent formulation given by the Caffarelli-Silvestre \cite{CSext} associated extension problem ---a local PDE in $\R^{n+1}_+$. For such extension,
some Pohozaev type identities are proved in \cite{BCP,CC,CT}.
However, these identities contain boundary terms on the cylinder $\partial\Omega\times \mathbb R^+$ or in a half-sphere $\partial B_R^+\cap\mathbb R^{n+1}_+$, which have no clear interpretation in terms of the original problem in $\R^n$.
The proofs of these identities are similar to the one of the classical Pohozaev identity and use PDE tools (differential calculus identities and integration by parts).

Sometimes it may be useful to write the Pohozaev identity as
\[2s[u]^2_{H^s(\mathbb R^n)}-2n\mathcal{E}[u]=\Gamma(1+s)^2\int_{\partial\Omega}\left(\frac{u}{\delta^s}\right)^2(x\cdot\nu)d\sigma,\]
where $\mathcal{E}$ is the energy functional
\begin{equation}\label{energy}
\mathcal{E}[u]=\frac12[u]^2_{H^s(\mathbb R^n)}-\int_\Omega F(u)dx,
\end{equation}
$F'=f$, and
\begin{equation}\label{seminorm}
[u]_{H^s(\mathbb R^n)}=\||\xi|^s\mathcal F[u]\|_{L^2(\mathbb R^n)}=\frac{c_{n,s}}{2}\int_{\mathbb R^n}\int_{\mathbb R^n}\frac{\left|u(x)-u(y)\right|^2}{|x-y|^{n+2s}}dxdy.
\end{equation}
We have used that if $u$ and $v$ are $H^s(\R^n)$ functions and $u\equiv v\equiv0$ in $\R^n\setminus\Omega$, then
\begin{equation}\label{fip}
\int_\Omega v(-\Delta)^su\, dx=\int_{\R^n}(-\Delta)^{s/2}u(-\Delta)^{s/2}v\, dx,\end{equation}
which yields
\[\int_\Omega uf(u)dx=\int_{\mathbb R^n}|(-\Delta)^{s/2}u|^2dx=[u]_{H^s(\mathbb R^n)}.\]

As a consequence of our Pohozaev identity we obtain nonexistence results for problem \eqref{eq} with supercritical nonlinearities $f$ in star-shaped domains $\Omega$.
In Section \ref{sec2} we will give, however, a short proof of this result using our method to establish the Pohozaev identity.
This shorter proof will not require the full strength of the identity.

\begin{cor}\label{cornonexistence} Let $\Omega$ be a bounded, $C^{1,1}$, and star-shaped domain, and let $f$ be a locally Lipschitz function.
If
\begin{equation}\label{supercritic} \frac{n-2s}{2n}uf(u)\geq\int_0^u f(t)dt\qquad\textrm{for all}\ \ u\in\mathbb R, \end{equation}
then problem \eqref{eq} admits no positive bounded solution. Moreover, if the inequality in \eqref{supercritic} is strict, then \eqref{eq} admits no nontrivial bounded solution.
\end{cor}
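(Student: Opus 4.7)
My plan is to plug a bounded solution of \eqref{eq} into the Pohozaev identity of Theorem \ref{thpoh} and read off both nonexistence statements from the signs of its two sides.

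After a translation I may assume that $\Omega$ is star-shaped with respect to the origin, so that $x\cdot\nu(x)\ge 0$ for every $x\in\partial\Omega$. The identity can be rewritten as
\[
-2n\int_\Omega\left[\frac{n-2s}{2n}\,uf(u)-F(u)\right]dx=\Gamma(1+s)^2\int_{\partial\Omega}\left(\frac{u}{\delta^s}\right)^2(x\cdot\nu)\,d\sigma.
\]
Hypothesis \eqref{supercritic} makes the integrand on the left nonnegative, so the left-hand side is $\le 0$, while star-shapedness makes the right-hand side $\ge 0$. Both sides must therefore vanish.

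If \eqref{supercritic} is strict for every $u\neq 0$, then the integrand on the left is strictly positive on $\{u\neq 0\}$; vanishing of the integral forces this set to have zero Lebesgue measure, and since bounded solutions of \eqref{eq} are continuous this means $u\equiv 0$, proving the second assertion.

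For the first assertion I would argue by contradiction: suppose $u>0$ in $\Omega$. The divergence theorem applied to the field $x$ gives $\int_{\partial\Omega}x\cdot\nu\,d\sigma=n|\Omega|>0$, so $x\cdot\nu>0$ on an open subset of $\partial\Omega$. I then expect a fractional Hopf-type property to be available: every positive bounded solution of \eqref{eq} satisfies $u/\delta^s>0$ throughout $\partial\Omega$. This is natural given the continuous boundary extension of $u/\delta^s$ furnished by Theorem \ref{thpoh}, and at each boundary point it can be obtained by comparison with an explicit positive subsolution, for instance the solution of $(-\Delta)^s w=1$ in a small interior ball touching the point, whose quotient $w/\delta^s$ admits an explicit positive lower bound. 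Granted this positivity, the right-hand side of the identity is strictly positive, contradicting the vanishing above. The main obstacle is precisely this Hopf-type lower bound for $u/\delta^s$ on $\partial\Omega$; everything else is a sign-chasing application of Theorem \ref{thpoh}.
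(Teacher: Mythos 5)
Your argument is essentially the paper's proof given at the end of Section~\ref{sec8}: both sides of the Pohozaev identity are forced to vanish, the strict case is handled by continuity of $u$, and the positive case invokes a fractional Hopf lemma to conclude $u/\delta^s>0$ on $\partial\Omega$. The Hopf-type lower bound you flag as the main obstacle is indeed available and the paper simply cites it (Lemma~3.2 in \cite{RS}, or \cite{CRS}), so your sketch is complete once that reference is supplied; note also that for the strict-inequality case the paper gives, in Section~\ref{sec2}, a shorter self-contained proof that sidesteps the full identity and uses only the Cauchy--Schwarz inequality $I_\lambda\le I_1$.
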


For the pure power nonlinearity, the result reads as follows.

\begin{cor}\label{cornonexistence2} Let $\Omega$ be a bounded, $C^{1,1}$, and star-shaped domain. If $p\geq\frac{n+2s}{n-2s}$, then problem
\begin{equation}\label{eqp}\left\{ \begin{array}{rcll} (-\Delta)^s u &=&|u|^{p-1}u&\textrm{in }\Omega \\
u&=&0&\textrm{in }\mathbb R^n\backslash\Omega\end{array}\right.\end{equation}
admits no positive bounded solution. Moreover, if $p>\frac{n+2s}{n-2s}$ then \eqref{eqp} admits no nontrivial bounded solution.
\end{cor}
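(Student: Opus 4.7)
The plan is to deduce Corollary \ref{cornonexistence2} as a direct specialization of Corollary \ref{cornonexistence} to the pure power nonlinearity $f(u)=|u|^{p-1}u$. Since $p\geq\frac{n+2s}{n-2s}>1$, this $f$ is locally Lipschitz, so the hypotheses on $\Omega$ and $f$ in Corollary \ref{cornonexistence} are satisfied, and the only remaining task is to check that the supercriticality condition \eqref{supercritic} is equivalent to the algebraic condition on $p$ stated in Corollary \ref{cornonexistence2}.

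To this end I would compute
\[uf(u)=|u|^{p+1},\qquad F(u)=\int_0^u f(t)\,dt=\frac{|u|^{p+1}}{p+1},\]
so that \eqref{supercritic} reads
\[\frac{n-2s}{2n}\,|u|^{p+1}\;\geq\;\frac{|u|^{p+1}}{p+1}\qquad\text{for all }u\in\R.\]
Since $|u|^{p+1}\geq 0$, this is equivalent to $\frac{n-2s}{2n}\geq\frac{1}{p+1}$, i.e., $(p+1)(n-2s)\geq 2n$, i.e., $p\geq\frac{n+2s}{n-2s}$. In the critical case $p=\frac{n+2s}{n-2s}$, Corollary \ref{cornonexistence} then rules out positive bounded solutions of \eqref{eqp}. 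In the supercritical case $p>\frac{n+2s}{n-2s}$ the same inequality is strict for every $u\neq 0$, which is the regime in which the last sentence of Corollary \ref{cornonexistence} forbids any nontrivial bounded solution.

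There is essentially no obstacle here: all the analytical content---boundary regularity of $u/\delta^s$ and the fractional Pohozaev identity itself---is packaged into Theorem \ref{thpoh} and Corollary \ref{cornonexistence}, and what remains for Corollary \ref{cornonexistence2} is only the elementary identification of $\frac{n+2s}{n-2s}$ as the Sobolev-type critical exponent that the condition \eqref{supercritic} produces when specialized to pure powers.
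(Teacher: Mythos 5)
Your proposal is correct and follows essentially the same route as the paper, which obtains Corollary \ref{cornonexistence2} by specializing the general nonexistence result to the pure power $f(u)=|u|^{p-1}u$ and checking that the supercriticality condition reduces to $p\geq\frac{n+2s}{n-2s}$. The paper phrases this as a specialization of Corollary \ref{cornonexistence3} (with $x$-dependent $f$), while you specialize Corollary \ref{cornonexistence}, but since \eqref{supercritic} is exactly \eqref{supercritic2} for $x$-independent $f$, the two are the same argument, and your explicit verification that $uf(u)=|u|^{p+1}$ and $F(u)=|u|^{p+1}/(p+1)$ yield the stated threshold (with strictness precisely when $p$ is strictly supercritical) is accurate.
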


The nonexistence of changing-sign solutions to problem \eqref{eqp} for the critical power $p=\frac{n+2s}{n-2s}$ remains open.

Recently, M. M. Fall and T. Weth \cite{FW} have also proved a nonexistence result for problem \eqref{eq} with the method of moving spheres.
In their result no regularity of the domain is required, but they need to assume the solutions to be positive.
Our nonexistence result is the first one allowing changing-sign solutions.
In addition, their condition on $f$ for the nonexistence ---\eqref{condFW} in our Remark \ref{remFW}--- is more restrictive than ours, i.e., \eqref{supercritic} and,
when $f=f(x,u)$, condition \eqref{supercritic2}.

The existence of weak solutions $u\in H^s(\R^n)$ to problem \eqref{eq} for subcritical $f$ has been recently proved by R. Servadei and E. Valdinoci \cite{SV}.

The Pohozaev identity will be a consequence of the following two results.
The first one establishes $C^s(\R^n)$ regularity for $u$, $C^{\alpha}(\overline\Omega)$ regularity for $u/\delta^s|_\Omega$, and higher order interior H\"older estimates for $u$ and $u/\delta^s$. It is proved in our paper \cite{RS}.

Throughout the article, and when no confusion is possible, we will use the notation $C^\beta(U)$ with $\beta>0$ to refer to the space $C^{k,\beta'}(U)$, where $k$ is the is greatest integer such that $k<\beta$, and $\beta'=\beta-k$. This notation is specially appropriate when we work with $(-\Delta)^s$ in order to avoid the splitting of different cases in the statements of regularity results.
According to this, $[\cdot]_{C^{\beta}(U)}$ denotes the $C^{k,\beta'}(U)$ seminorm
\[[u]_{C^\beta(U)}=[u]_{C^{k,\beta'}(U)}=\sup_{x,y\in U,\ x\neq y}\frac{|D^ku(x)-D^ku(y)|}{|x-y|^{\beta'}}.\]
Here, by $f\in C^{0,1}_{\rm loc}(\overline\Omega\times\R)$ we mean that $f$ is Lipschitz in every compact subset of $\overline\Omega\times \R$.

\begin{thm}[\cite{RS}]\label{krylov} Let $\Omega$ be a bounded and $C^{1,1}$ domain, $f\in C^{0,1}_{\rm loc}(\overline\Omega\times\R)$, $u$ be a bounded solution of
\begin{equation}\label{eqlin}
\left\{ \begin{array}{rcll} (-\Delta)^s u &=&f(x,u)&\textrm{in }\Omega \\
u&=&0&\textrm{in }\mathbb R^n\backslash\Omega,\end{array}\right.\end{equation}
and $\delta(x)={\rm dist}(x,\partial\Omega)$. Then,
\begin{itemize}
\item[(a)] $u\in C^s(\R^n)$ and, for every $\beta\in[s,1+2s)$, $u$ is of class $C^{\beta}(\Omega)$ and
\[[u]_{C^{\beta}(\{x\in\Omega\,:\,\delta(x)\ge\rho\})}\le C \rho^{s-\beta}\qquad \textrm{for all}\ \  \rho\in(0,1).\]
\item[(b)] The function $u/\delta^s|_\Omega$ can be continuously extended to $\overline\Omega$.
Moreover, $u/\delta^s$ belongs to $C^{\alpha}(\overline{\Omega})$ for some $\alpha\in(0,1)$ depending only on $\Omega$, $s$, $f$, $\|u\|_{L^{\infty}(\R^n)}$.
In addition, for all $\beta\in[\alpha,s+\alpha]$, it holds the estimate
\[ [u/\delta^s]_{C^{\beta}(\{x\in\Omega\,:\,\delta(x)\ge\rho\})}\le C \rho^{\alpha-\beta}\qquad \textrm{for all}\ \  \rho\in(0,1).\]
\end{itemize}
The constant $C$ depends only on $\Omega$, $s$, $f$, $\|u\|_{L^{\infty}(\R^n)}$, and $\beta$.
\end{thm}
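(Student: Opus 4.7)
The plan is to reduce to a linear right-hand side $g(x)=f(x,u(x))\in L^\infty(\Omega)$ and then combine barriers, interior Schauder-type estimates for $(-\Delta)^s$, and a compactness/Liouville blow-up scheme at the boundary to climb from the $L^\infty$ bound on $u$ all the way up to $C^\alpha$ regularity of $u/\delta^s$.

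For the global part of (a), the guiding observation is that $(x_n)_+^s$ is $s$-harmonic in $\{x_n>0\}$. Using this explicit model together with the $C^{1,1}$ regularity of $\partial\Omega$ to flatten the boundary locally, one builds a barrier $\varphi$ with $\varphi\asymp \delta^s$ in a neighborhood of $\partial\Omega$ and $|(-\Delta)^s\varphi|\le C$ there. The comparison principle yields $|u|\le C\delta^s$ in $\Omega$, and this combined with the standard interior Hölder estimate for $(-\Delta)^s v = g\in L^\infty$ gives $u\in C^s(\mathbb R^n)$ with a quantitative bound. For the higher-order interior estimate, I rescale: for $x_0\in\Omega$ with $\delta(x_0)=2\rho$, set $v(y)=u(x_0+\rho y)$, so that $(-\Delta)^s v(y)=\rho^{2s}g(x_0+\rho y)$ in $B_1$ and the tail of $v$ is controlled uniformly via $|u|\le C\delta^s$. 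Interior Schauder estimates, bootstrapped using the Lipschitz dependence of $f$, give $[v]_{C^\beta(B_{1/2})}\le C$ for every $\beta\in[s,1+2s)$; undoing the scaling produces the claimed factor $\rho^{s-\beta}$.

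The crux is part (b). For each boundary point $x_0\in\partial\Omega$ I would prove, inductively over dyadic scales $r_k=2^{-k}r_0$, the existence of constants $Q_k(x_0)$ with $|Q_{k+1}-Q_k|\le C r_k^\alpha$ and
\[\|u-Q_k(x_0)\delta^s\|_{L^\infty(B_{r_k}(x_0)\cap\Omega)}\le C r_k^{s+\alpha}.\]
This is equivalent to $|u/\delta^s - Q_k(x_0)|\le C r_k^\alpha$ on $B_{r_k}(x_0)\cap\Omega$ and thus produces both the continuous extension $Q(x_0)=\lim_k Q_k(x_0)$ of $u/\delta^s$ to $\overline\Omega$ and its $C^\alpha$ regularity at $x_0$. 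The inductive step is a flatness-improvement lemma proved by contradiction: a hypothetical failing sequence, after flattening $\partial\Omega$ near $x_0$ and rescaling, would converge (by the Step~1 bound $|u|\le C\delta^s$ together with interior estimates) to a nonzero bounded function $w$ that is $s$-harmonic in the half-space $\{x_n>0\}$, vanishes in $\{x_n\le 0\}$, and grows at most as $|x|^s$. A Liouville-type classification then forces $w=c(x_n)_+^s$, contradicting non-improvement. Hölder dependence of $Q(x_0)$ on $x_0$ follows from comparing expansions at nearby boundary points. The interior $C^\beta$ estimate for $u/\delta^s$ on $\{\delta\ge\rho\}$ comes from combining this boundary control with the Step~2 Schauder bounds on $u$, using that $\delta^s$ is smooth and bounded away from zero in the interior.

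The main obstacle is the blow-up argument in the last step: one must simultaneously control the nonlocal tails of the rescaled functions uniformly so that precompactness holds in a topology strong enough to pass the equation to the limit, and prove the half-space Liouville theorem classifying $s$-harmonic functions in $\{x_n>0\}$ with $\delta^s$-type growth and zero exterior data. The bound $|u|\le C\delta^s$ coming out of the barrier step is precisely what is needed to make the blow-up sequence converge to an object to which this classification applies, which is why the global $C^s$ bound has to be established first.
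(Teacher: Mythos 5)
The paper does not prove this theorem itself: it is cited verbatim from \cite{RS} (where the relevant results are Propositions~1.1 and~1.4 and Theorem~1.2 there). Your outline of part (a) coincides with the cited proof: build a supersolution modeled on the $s$-harmonic function $(x_n)_+^s$, flatten $\partial\Omega$ using its $C^{1,1}$ regularity to conclude $|u|\le C\delta^s$ and global $C^s$ regularity, then rescale at $x_0$ with $\delta(x_0)\sim\rho$ and apply interior estimates to obtain the $\rho^{s-\beta}$ factor. For part (b), however, your route is genuinely different from \cite{RS}. You propose a contradiction--compactness scheme at the boundary, with a Liouville classification of $s$-harmonic functions in the half-space that vanish in the complement and grow sub-$|x|^{s+\alpha}$. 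The proof in \cite{RS} is instead a direct Krylov-type argument (hence the label of the theorem): one proves, by barrier constructions and comparison, a boundary Harnack inequality for $u/\delta^s$, and then iterates a quantitative oscillation-decay estimate on dyadic boundary scales; no compactness and no half-space Liouville theorem are needed. The direct route is more elementary and self-contained; your blow-up route requires both the Liouville classification (which, as you note, uses the $|u|\le C\delta^s$ bound to identify the limit profile) and uniform control of the nonlocal tails along the rescaled sequence, which are substantial additional inputs, though they do make the argument more robust to perturbations of the kernel. One small precision issue: the bound $\|u-Q_k\delta^s\|_{L^\infty(B_{r_k}(x_0)\cap\Omega)}\le Cr_k^{s+\alpha}$ is not literally equivalent to $|u/\delta^s-Q_k|\le Cr_k^\alpha$ throughout $B_{r_k}(x_0)\cap\Omega$, because $\delta^s$ degenerates near $\partial\Omega$ inside the ball. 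The correct deduction is to fix $x$, choose the scale $k$ with $r_k\sim\delta(x)$, divide there, and then sum the increments $|Q_{j+1}-Q_j|\le Cr_j^\alpha$ over $j\ge k$; with that adjustment the step is sound. Finally, the interior estimate in (b) on $\{\delta\ge\rho\}$ requires a little more than ``$\delta^s$ is smooth away from $\partial\Omega$'': one must also observe that the weighted bounds on $u$ from part (a) are precisely what controls the higher-order seminorms of the quotient after dividing by $\delta^s$, which is how \cite{RS} phrases it via weighted H\"older norms (cf.\ Definition~\ref{definorm} and Lemma~\ref{cosaiscalpha} in this paper).
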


\begin{rem}\label{boundedsol}
For bounded solutions of \eqref{eqlin}, the notions of energy and viscosity solutions coincide (see more details in Remark 2.9 in \cite{RS}).
Recall that $u$ is an energy (or weak) solution of problem \eqref{eqlin} if $u\in H^s(\mathbb R^n)$, $u\equiv0$ in $\mathbb R^n\backslash\Omega$, and
\[\int_{\mathbb R^n}(-\Delta)^{s/2}u(-\Delta)^{s/2}v\,dx=\int_\Omega f(x,u)v\,dx\]
for all $v\in H^s(\R^n)$ such that $v\equiv0$ in $\R^n\setminus\Omega$.

By Theorem \ref{krylov} (a), any bounded weak solution is continuous up to the boundary and solve equation \eqref{eqlin} in the classical sense, i.e., in the pointwise sense of \eqref{laps}.
Therefore, it follows from the definition of viscosity solution (see \cite{CS}) that bounded weak solutions are also viscosity solutions.

Reciprocally, by uniqueness of viscosity solutions \cite{CS} and existence of weak solution for the linear problem $(-\Delta)^sv=f(x,u(x))$, any viscosity solution $u$ belongs to $H^s(\R^n)$ and it is also a weak solution. See \cite{RS} for more details.
\end{rem}

The second result towards Theorem \ref{thpoh} is the new Pohozaev identity for the fractional Laplacian. The hypotheses of the following proposition are satisfied for any bounded solution $u$ of \eqref{eqlin} whenever $f\in C^{0,1}_{\rm loc}(\overline\Omega\times\R)$, by our results in \cite{RS} (see Theorem \ref{krylov} above).

\begin{prop}\label{intparts} Let $\Omega$ be a bounded and $C^{1,1}$ domain. Assume that $u$ is a $H^s(\R^n)$ function which vanishes in $\R^n\setminus\Omega$, and satisfies
\begin{itemize}
\item[(a)] $u\in C^s(\R^n)$ and, for every $\beta\in[s,1+2s)$, $u$ is of class $C^{\beta}(\Omega)$ and
\[[u]_{C^{\beta}(\{x\in\Omega\,:\,\delta(x)\ge\rho\})}\le C \rho^{s-\beta}\qquad \textrm{for all}\ \  \rho\in(0,1).\]
\item[(b)] The function $u/\delta^s|_\Omega$ can be continuously extended to $\overline\Omega$. Moreover, there exists $\alpha\in(0,1)$ such that $u/\delta^s\in C^{\alpha}(\overline{\Omega})$. In addition, for all $\beta\in[\alpha,s+\alpha]$, it holds the estimate
\[ [u/\delta^s]_{C^{\beta}(\{x\in\Omega\,:\,\delta(x)\ge\rho\})}\le C \rho^{\alpha-\beta}\qquad \textrm{for all}\ \  \rho\in(0,1).\]
\item[(c)] $(-\Delta)^s u$ is pointwise bounded in $\Omega$.
\end{itemize}
Then, the following identity holds
\[\int_\Omega(x\cdot\nabla u)(-\Delta)^su\ dx=\frac{2s-n}{2}\int_{\Omega}u(-\Delta)^su\ dx-\frac{\Gamma(1+s)^2}{2}\int_{\partial\Omega}\left(\frac{u}{\delta^s}\right)^2(x\cdot\nu)d\sigma,\]
where $\nu$ is the unit outward normal to $\partial\Omega$ at $x$, and $\Gamma$ is the Gamma function.
\end{prop}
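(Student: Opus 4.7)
The plan is a scaling perturbation argument applied to
\[G(\lambda) := \int_{\R^n} u(\lambda x)\,(-\Delta)^s u(x)\,dx, \qquad \lambda > 0,\]
combined with a careful analysis of the boundary layer that produces the Pohozaev term. The derivative of $G$ at $\lambda = 1$ will be evaluated in two different ways: once rigorously via an exact scaling symmetry of $G$, and once by differentiating under the integral sign. The mismatch between these two computations, localized in a thin shell near $\partial\Omega$, will yield precisely the boundary integral in the statement.

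First, using the Plancherel-type identity \eqref{fip} together with the scaling $(-\Delta)^{s/2}(u_\lambda)(x) = \lambda^s \bigl((-\Delta)^{s/2}u\bigr)(\lambda x)$, I would rewrite $G(\lambda) = \lambda^s \int g(\lambda x)\,g(x)\,dx$ with $g := (-\Delta)^{s/2}u \in L^2(\R^n)$. The change of variables $y = \sqrt{\lambda}\,x$ then produces the exact symmetry $G(\lambda) = \lambda^{2s-n} G(1/\lambda)$, valid for every $\lambda > 0$. Differentiating this relation at $\lambda = 1$ in a one-sided fashion yields the rigid identity
\[G'_+(1) + G'_-(1) = (2s-n)\,G(1) = (2s-n) \int_\Omega u(-\Delta)^s u\,dx,\]
where $G'_\pm(1)$ denote the one-sided derivatives of $G$ at $\lambda = 1$.

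Next, I would compute each of these one-sided derivatives directly. For $h > 0$ the support of $u_{1+h}$ shrinks to $\Omega/(1+h) \subset \Omega$; using the interior $C^\beta$ estimates of Theorem~\ref{krylov}(a), the pointwise boundedness of $(-\Delta)^s u$ from hypothesis~(c), and the bound $u = O(\delta^s)$ on the shell $\Omega \setminus \Omega/(1+h)$, this shell contributes only $O(h^{1+s})$, and one obtains
\[G'_+(1) = \int_\Omega (x\cdot\nabla u)(-\Delta)^s u\,dx\]
with no extra contribution. The case $h < 0$ is the delicate one: here $u_{1+h}$ extends into an outer shell $S_h \subset \R^n \setminus \overline{\Omega}$ of width $\sim |h|$, on which $u_{1+h} \sim |h|^s$ while $(-\Delta)^s u \sim |h|^{-s}$, the latter because the $\delta^s$ boundary behavior of $u$ on the interior side generates a matching singularity of $(-\Delta)^s u$ on the exterior side. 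The pointwise product is thus of order $1$ on $S_h$, and integrating over $S_h$ and dividing by $h$ yields a nontrivial boundary contribution:
\[G'_-(1) = \int_\Omega (x\cdot\nabla u)(-\Delta)^s u\,dx + \Gamma(1+s)^2 \int_{\partial\Omega} \left(\frac{u}{\delta^s}\right)^{2} (x\cdot\nu)\,d\sigma.\]
Plugging these two expressions into the previous display and dividing by $2$ then gives the identity of Proposition~\ref{intparts}.

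The main obstacle is the precise evaluation of the shell contribution in the left derivative, and in particular the identification of the constant $\Gamma(1+s)^2$. My plan here is to use Theorem~\ref{krylov}(b), i.e., $u/\delta^s \in C^\alpha(\overline{\Omega})$, to replace $u$ inside a thin neighborhood of a point $x_0 \in \partial\Omega$ by its leading-order profile $(u/\delta^s)(x_0)\,\delta^s$; after straightening $\partial\Omega$ locally and reducing to the half-space $\{x_n > 0\}$, the explicit one-dimensional computation of $(-\Delta)^s$ applied to $(x_n)_+^s$ produces the factor $\Gamma(1+s)^2$. The $C^\alpha$ regularity of $u/\delta^s$ is exactly what is needed to ensure that the error from this constant-coefficient approximation integrates to $o(|h|)$ on $S_h$, so that the leading-order local model captures the full boundary term.
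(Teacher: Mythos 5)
Your proposal takes a genuinely different route from the paper's, so let me compare. Both proofs rest on the same scaling symmetry: you observe directly that $G(\lambda)=\lambda^{2s-n}G(1/\lambda)$ via Plancherel, while the paper obtains the equivalent relation in the guise of $\int u_\lambda(-\Delta)^su = \lambda^{(2s-n)/2}\int w_{\sqrt\lambda}\,w_{1/\sqrt\lambda}$ with $w=(-\Delta)^{s/2}u$. The decisive difference is in how the boundary term is extracted. The paper only ever takes the one-sided derivative from $\lambda>1$ (where star-shapedness keeps $u_\lambda$ supported in $\Omega$) and transfers the singular analysis to the half-Laplacian $w$, which has a logarithmic blow-up at $\partial\Omega$ (Proposition~\ref{thlaps/2}); the boundary constant $\Gamma(1+s)^2$ then appears through the one-dimensional computation of Proposition~\ref{propoperador} as $c_1^2(\pi^2+c_2^2)$. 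You avoid the half-Laplacian entirely: you take both one-sided derivatives of $G$, and for $\lambda<1$ you confront head-on the $\delta^{-s}$ blow-up of $(-\Delta)^su$ from outside $\Omega$, compensated by the $\delta^s$ vanishing of $u_\lambda$ in the shell. Your heuristic for the constant is correct: using $(-\Delta)^s(x_n)_+^s = -\tfrac{\Gamma(1+s)}{\Gamma(1-s)}|x_n|^{-s}$ for $x_n<0$ and the Beta integral $\int_0^1(1-\sigma)^s\sigma^{-s}\,d\sigma=\Gamma(1+s)\Gamma(1-s)$, the product of constants does simplify to $\Gamma(1+s)^2$, and the angular factor contributes exactly $(x\cdot\nu)\,d\sigma$. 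The trade-off is that you need the precise exterior asymptotics $(-\Delta)^su(x)\approx -\tfrac{\Gamma(1+s)}{\Gamma(1-s)}\,\bigl(u/\delta^s\bigr)(\pi(x))\,\delta(x)^{-s}$ with a controlled remainder, which is an auxiliary result nowhere established in the paper; it is morally parallel to Proposition~\ref{thlaps/2}, and its proof would require similar work (freezing $u/\delta^s$, straightening the boundary, and estimating the error via the $C^\alpha$ regularity of $u/\delta^s$ and the $C^{1,1}$ regularity of $\partial\Omega$, together with uniformity in the base point as in Proposition~\ref{lema1}). One conceptual gain of your scheme is that the existence of $G_-'(1)$ is automatic once $G_+'(1)$ exists, by the scaling identity; but the identification $G_-'(1) = A + B$ is still the crux and cannot be bypassed.

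Two genuine gaps remain. First, the shell computation itself is only sketched; making it rigorous (error control in the constant-coefficient approximation, interchange of limit and surface integral, uniformity over $\partial\Omega$) is work of the same order as the paper's Propositions~\ref{thlaps/2} and~\ref{propoperador}, not a routine afterthought. Second, your argument is implicitly for domains star-shaped about the origin (you write $\Omega/(1+h)\subset\Omega$), and Proposition~\ref{intparts} is stated for arbitrary bounded $C^{1,1}$ domains. The paper handles the general case by a partition-of-unity reduction to a bilinear version of the identity (Section~\ref{sec8}, Lemmas~\ref{duesboles} and~\ref{unabola}); your proposal would need the analogous extension, since the scaling map does not keep $u_\lambda$ supported in $\Omega$ and the shell $\lambda^{-1}\Omega\triangle\Omega$ carries a sign change when $x\cdot\nu$ changes sign.
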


\begin{rem} Note that hypothesis (a) ensures that $(-\Delta)^s u$ is defined pointwise in $\Omega$.
Note also that hypotheses (a) and (c) ensure that the integrals appearing in the above identity are finite.
\end{rem}

\begin{rem}
By Propositions 1.1 and 1.4 in \cite{RS}, hypothesis (c) guarantees that $u\in C^s (\R^n)$ and $u/\delta^s\in C^\alpha(\overline\Omega)$, but not the interior estimates in (a) and (b).
However, under the stronger assumption $(-\Delta)^s u\in C^\alpha(\overline\Omega)$ the whole hypothesis (b) is satisfied; see Theorem 1.5 in \cite{RS}.
\end{rem}

As a consequence of Proposition \ref{intparts}, we will obtain the Pohozaev identity (Theorem \ref{thpoh}) and also a new integration by parts formula related to the fractional Laplacian.
This integration by parts formula follows from using Proposition \ref{intparts} with two different origins.

\begin{thm}\label{corintparts} Let $\Omega$ be a bounded and $C^{1,1}$ domain, and $u$ and $v$ be functions satisfying the hypotheses in Proposition \ref{intparts}.
Then, the following identity holds\footnote{In the previous version of this paper the sign of the boundary term in the following identity was incorrect. We thank Gerd Grubb for pointing this to us.}
\[\int_\Omega (-\Delta)^su\ v_{x_i}\,dx=-\int_\Omega u_{x_i}(-\Delta)^sv\,dx-\Gamma(1+s)^2\int_{\partial\Omega}\frac{u}{\delta^{s}}\frac{v}{\delta^{s}}\,\nu_i\,d\sigma\]
for $i=1,...,n$, where $\nu$ is the unit outward normal to $\partial\Omega$ at $x$, and $\Gamma$ is the Gamma function.
\end{thm}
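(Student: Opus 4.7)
The plan is to derive Theorem \ref{corintparts} from Proposition \ref{intparts} by first polarizing the proposition (applying it to $u$, $v$, and $u+v$) and then shifting the origin. All the delicate analysis is already packaged into Proposition \ref{intparts}, so what remains is essentially a formal manipulation.

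Since the class of functions satisfying hypotheses (a)--(c) is closed under sums (the relevant seminorms are subadditive and $(-\Delta)^s$ is linear), Proposition \ref{intparts} applies to each of $u$, $v$, and $u+v$. Writing the identity for $u+v$, subtracting the identities for $u$ and for $v$, and using the symmetry
\[
\int_\Omega u\,(-\Delta)^s v\,dx=\int_\Omega v\,(-\Delta)^s u\,dx
\]
supplied by \eqref{fip} to collapse the mixed quadratic volume term, one obtains the polarized form of Proposition \ref{intparts}:
\[
\int_\Omega\bigl[(x\cdot\nabla u)(-\Delta)^s v+(x\cdot\nabla v)(-\Delta)^s u\bigr]dx=(2s-n)\int_\Omega u\,(-\Delta)^s v\,dx-\Gamma(1+s)^2\int_{\partial\Omega}\frac{u}{\delta^s}\frac{v}{\delta^s}(x\cdot\nu)\,d\sigma.
\]

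The second ingredient is the translation invariance of $(-\Delta)^s$: applying the polarized identity in a coordinate system whose origin has been shifted to an arbitrary $x_0\in\R^n$ amounts, in the original coordinates, to replacing the position vector $x$ by $x-x_0$ in its two explicit occurrences (the volume factor $x\cdot\nabla$ and the boundary factor $x\cdot\nu$), all other ingredients ($(-\Delta)^s u$, $\delta$, $\nu$, and the functions themselves) being translation invariant. Subtracting the shifted polarized identity from the unshifted one, the $(2s-n)\int u(-\Delta)^sv\,dx$ term cancels since it is origin-free, and the result is a linear relation in $x_0$ of the form
\[
x_0\cdot\int_\Omega\bigl[\nabla u\,(-\Delta)^s v+\nabla v\,(-\Delta)^s u\bigr]dx=\Gamma(1+s)^2\,x_0\cdot\int_{\partial\Omega}\frac{u}{\delta^s}\frac{v}{\delta^s}\,\nu\,d\sigma,
\]
valid for every $x_0\in\R^n$. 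Choosing $x_0=e_i$ isolates the $i$-th Cartesian component, and rewriting $\int_\Omega v_{x_i}(-\Delta)^s u\,dx$ as $\int_\Omega(-\Delta)^s u\cdot v_{x_i}\,dx$ delivers the identity asserted in Theorem \ref{corintparts}.

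I do not foresee any substantial obstacle beyond the routine verification that the manipulations above are legitimate: hypotheses (a)--(c) are clearly preserved by finite linear combinations and by coordinate translations, and every integral that appears is absolutely convergent thanks to the interior bound $|\nabla u|,|\nabla v|\lesssim \delta^{s-1}$ encoded in (a), combined with the pointwise boundedness of $(-\Delta)^s u$ and $(-\Delta)^s v$ from (c); near $\partial\Omega$ one has $\delta^{s-1}\in L^1$ since $s-1>-1$, and the boundary integrand is continuous by (b). The only genuine analytic input is Proposition \ref{intparts} itself.
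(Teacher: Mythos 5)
Your strategy---polarize Proposition \ref{intparts} (apply it to $u$, $v$, $u+v$) and then vary the origin---is essentially identical to the paper's, which merely swaps the order of the two formal steps: the paper first varies the origin in the quadratic identity to obtain \eqref{eqintparts}, namely $\int_\Omega u_{x_i}(-\Delta)^s u\,dx=-\tfrac{\Gamma(1+s)^2}{2}\int_{\partial\Omega}(u/\delta^s)^2\nu_i\,d\sigma$, and then polarizes by taking $w=u+v$ and $w=u-v$. Both reductions are legitimate, and your checks that hypotheses (a)--(c) pass to finite linear combinations and that all integrals converge are correct and sufficient.

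There is, however, a sign slip in your final subtraction. Writing the polarized identity at origins $0$ and $x_0$ and subtracting, the boundary contribution is $-\Gamma(1+s)^2\int_{\partial\Omega}\tfrac{uv}{\delta^{2s}}\bigl[(x\cdot\nu)-((x-x_0)\cdot\nu)\bigr]\,d\sigma=-\Gamma(1+s)^2\,x_0\cdot\int_{\partial\Omega}\tfrac{uv}{\delta^{2s}}\,\nu\,d\sigma$, so the linear relation you display should carry a minus sign on the right-hand side, giving $\int_\Omega\bigl[u_{x_i}(-\Delta)^s v+v_{x_i}(-\Delta)^s u\bigr]dx=-\Gamma(1+s)^2\int_{\partial\Omega}\tfrac{u}{\delta^s}\tfrac{v}{\delta^s}\nu_i\,d\sigma$. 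The same minus sign is forced by \eqref{eqintparts} and by the classical limit $s=1$: with $u/\delta\to -\partial_\nu u$ on $\partial\Omega$, the Green-formula computation gives $\int_\Omega(-\Delta u)v_{x_i}+\int_\Omega u_{x_i}(-\Delta v)=-\int_{\partial\Omega}(\partial_\nu u)(\partial_\nu v)\nu_i\,d\sigma$ (check it in $1$D with $u=x(1-x)$, $v=x^2(1-x)$ on $(0,1)$: both sides equal $-1$). In other words, the theorem as printed here and the paper's own proof (which quotes \eqref{eqintparts} with the sign flipped) contain a matching typographical sign error; your dropped minus is what made the computation land on the stated formula. The correct conclusion is the one with $-\Gamma(1+s)^2$.
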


To prove Proposition \ref{intparts} we first assume the domain $\Omega$ to be star-shaped with respect to the origin.
The result for general domains will follow from the star-shaped case, as seen in Section \ref{sec8}.
When the domain is star-shaped, the idea of the proof is the following.
First, one writes the left hand side of the identity as
\[\int_\Omega(x\cdot\nabla u)(-\Delta)^su\ dx=\left.\frac{d}{d\lambda}\right|_{\lambda=1^+}\int_{\Omega} u_\lambda (-\Delta)^s u\ dx,\]
where
\[u_\lambda(x)=u(\lambda x).\]
Note that $u_\lambda\equiv0$ in $\mathbb R^n\backslash \Omega$, since $\Omega$ is star-shaped and we take $\lambda>1$ in the above derivative.
As a consequence, we may use \eqref{fip} with $v=u_\lambda$ and make the change of variables $y=\sqrt{\lambda}x$, to obtain
\[\int_{\Omega}u_\lambda (-\Delta)^s u\ dx=\int_{\R^n} (-\Delta)^{s/2}u_\lambda (-\Delta)^{s/2} u\ dx=\lambda^{\frac{2s-n}{2}}\int_{\mathbb R^n} w_{\sqrt{\lambda}} w_{1/\sqrt{\lambda}}\ dy,\]
where
\[w(x)=(-\Delta)^{s/2}u(x).\]
Thus,
\begin{equation}\label{1207}\begin{split}
\int_\Omega(x\cdot\nabla u)(-\Delta)^su\ dx&= \left.\frac{d}{d\lambda}\right|_{\lambda=1^+}\left\{\lambda^{\frac{2s-n}{2}}\int_{\mathbb R^n} w_{\sqrt{\lambda}} w_{1/\sqrt{\lambda}}\ dy\right\}\\
&=\frac{2s-n}{2}\int_{\R^n}w^2dx+\left.\frac{d}{d\lambda}\right|_{\lambda=1^+}I_{\sqrt\lambda}\\
&=\frac{2s-n}{2}\int_{\R^n}u(-\Delta)^s u\,dx+\frac12\left.\frac{d}{d\lambda}\right|_{\lambda=1^+}I_{\lambda}
,\end{split}\end{equation}
where
\[I_\lambda=\int_{\mathbb R^n} w_{{\lambda}} w_{1/{\lambda}}dy.\]
Therefore, Proposition \ref{intparts} is equivalent to the following equality
\begin{equation}\label{partdificil}-\left.\frac{d}{d\lambda}\right|_{\lambda=1^+}\int_{\mathbb R^n} w_{{\lambda}} w_{1/{\lambda}}\ dy=
\Gamma(1+s)^2\int_{\partial\Omega}\left(\frac{u}{\delta^s}\right)^2(x\cdot\nu)d\sigma.\end{equation}

The quantity $\frac{d}{d\lambda}|_{\lambda=1^+}\int_{\R^n}w_{\lambda}w_{1/\lambda}$ vanishes for any $C^1(\R^n)$ function $w$, as can be seen by differentiating under the integral sign.
Instead, we will prove that the function $w=(-\Delta)^{s/2} u$ has a singularity along $\partial\Omega$, and that \eqref{partdificil} holds.

Next we give an easy argument to give a direct proof of the nonexistence result for supercritical nonlinearities without using neither equality \eqref{partdificil} nor the behavior of $(-\Delta)^{s/2}u$; the detailed proof is given in Section \ref{sec2}.

Indeed, in contrast with the delicate equality \eqref{partdificil}, the inequality
\begin{equation}\label{num}
\left.\frac{d}{d\lambda}\right|_{\lambda=1^+}I_\lambda\leq0
\end{equation}
follows easily from Cauchy-Schwarz.
Namely,
\[I_\lambda\leq \|w_\lambda\|_{L^2(\mathbb R^n)}\|w_{1/\lambda}\|_{L^2(\mathbb R^n)}=\|w\|_{L^2(\mathbb R^n)}^2=I_1,\]
and hence \eqref{num} follows.

With this simple argument, \eqref{1207} leads to
\[-\int_\Omega (x\cdot\nabla u)(-\Delta)^s u\ dx\geq\frac{n-2s}{2}\int_\Omega u(-\Delta)^su\ dx,\]
which is exactly the inequality used to prove the nonexistence result of Corollary \ref{cornonexistence} for supercritical nonlinearities.
Here, one also uses that, when $u$ is a solution of \eqref{eq}, then
\[\int_\Omega (x\cdot\nabla u)(-\Delta)^s u\ dx=\int_\Omega(x\cdot\nabla u)f(u)dx=\int_{\Omega}x\cdot\nabla F(u)dx=-n\int_\Omega F(u)dx.\]

This argument can be also used to obtain nonexistence results (under some decay assumptions) for weak solutions of \eqref{eq} in the whole $\R^n$; see Remark \ref{remR^n}.

The identity \eqref{partdificil} is the difficult part of the proof of Proposition \ref{intparts}.
To prove it, it will be crucial to know the precise behavior of $(-\Delta)^{s/2}u$ near $\partial\Omega$ ---from both inside and outside $\Omega$.
This is given by the following result.

\begin{prop}\label{thlaps/2}
Let $\Omega$ be a bounded and $C^{1,1}$ domain, and $u$ be a function such that $u\equiv0$ in $\mathbb R^n\backslash\Omega$ and that $u$ satisfies {\rm (b)} in Proposition \ref{intparts}.
Then, there exists a $C^{\alpha}(\mathbb R^n)$ extension $v$ of $u/\delta^s|_{\Omega}$ such that
\begin{equation}\label{**}
(-\Delta)^{s/2}u(x)=c_1\left\{\log^-\delta(x)+c_2\chi_{\Omega}(x)\right\}v(x)+h(x)\quad \textrm{in } \ \R^n,
\end{equation}
where $h$ is a $C^{\alpha}(\R^n)$ function, $\log^-t=\min\{\log t,0\}$,
\begin{equation}\label{1208} c_1=\frac{\Gamma(1+s)\sin\left(\frac{\pi s}{2}\right)}{\pi},\qquad{\rm and}\qquad c_2=\frac{\pi}{\tan\left(\frac{\pi s}{2}\right)}.\end{equation}

Moreover, if $u$ also satisfies {\rm (a)} in Proposition \ref{intparts}, then for all $\beta\in(0,1+s)$
\begin{equation}\label{numeret}
[(-\Delta)^{s/2}u]_{C^{\beta}(\{x\in\R^n:\,\delta(x)\ge\rho\})}\leq C\rho^{-\beta}\qquad \textrm{for all}\ \  \rho\in(0,1),
\end{equation}
for some constant $C$ which does not depend on $\rho$.
\end{prop}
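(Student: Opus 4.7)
The plan is a three-stage reduction: first separate the $C^\alpha$ factor $v$, then locally flatten the boundary to reach a half-space model, and finally compute the 1D model explicitly to extract the constants $c_1,c_2$.

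\textbf{Step 1 ($v$-reduction).} Since $u = v\,\delta^s\chi_\Omega$ globally (any $C^\alpha(\R^n)$ extension $v$ of $u/\delta^s|_\Omega$ works, as its values outside $\Omega$ are killed by $\chi_\Omega$), I would apply the bilinearity identity
\[(-\Delta)^{s/2}(v\,w)(x)=v(x)(-\Delta)^{s/2}w(x)+C_{n,s}\int_{\R^n}\frac{(v(x)-v(y))\,w(y)}{|x-y|^{n+s}}\,dy\]
with $w=\delta^s\chi_\Omega$. Because $v\in C^\alpha$ and $w$ is bounded with compact support, the integral term is a $C^\alpha(\R^n)$ function of $x$ (its integrand is dominated by $\delta(y)^s|x-y|^{\alpha-n-s}$, which is integrable with the standard $C^\alpha$ dependence). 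Thus it suffices to prove
\[(-\Delta)^{s/2}(\delta^s\chi_\Omega)(x)=c_1\bigl[\log^-\delta(x)+c_2\chi_\Omega(x)\bigr]+\tilde h(x),\qquad \tilde h\in C^\alpha(\R^n);\]
multiplying by $v(x)$ and absorbing cross-terms yields the full identity \eqref{**}.

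\textbf{Step 2 (Boundary flattening).} By a partition of unity near $\partial\Omega$ it is enough to work near each $z\in\partial\Omega$. Using $C^{1,1}$ regularity of $\partial\Omega$, choose a chart $\Phi$ with $\Phi(U\cap\Omega)=V\cap\{y_n>0\}$. In flattened coordinates $y=\Phi(x)$, one has $\delta(x)=y_n\,\psi(y)$ for a positive Lipschitz $\psi$, and $\delta^s\chi_\Omega$ pulls back to $\psi(y)^s(y_n)_+^s$. Applying the same bilinearity identity once more—this time to peel off the Lipschitz factor $\psi^s$ from the leading singular part—further reduces the problem to computing $(-\Delta)^{s/2}\bigl[\eta(y)(y_n)_+^s\bigr]$ for a smooth cutoff $\eta$, modulo a $C^\alpha$ remainder that absorbs the Jacobian of $\Phi$, the cutoff error, and the $\psi^s$ peeling correction.

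\textbf{Step 3 (1D model).} Since $(-\Delta_{\R^n})^{s/2}$ agrees with $(-\partial_{tt})^{s/2}$ on functions depending only on $y_n$, the target becomes the one-dimensional identity
\[(-\partial_{tt})^{s/2}\bigl[\eta(t)\,t_+^s\bigr](t)=c_1\log^-|t|+c_1c_2\,\chi_{\{t>0\}}(t)+h_0(t),\qquad h_0\in C^\alpha(\R).\]
I would verify this by direct analysis of the principal-value singular integral, splitting the integration in $\tau$ over $(-\infty,0)$, $(0,t)$, $(t,\infty)$ and scaling $\tau=t\sigma$ where possible. The logarithmic divergence originates from the slowly decaying tail $\sim 1/\tau$ beyond the scale $|t|$, cut off by $\eta$, giving $\int_{|t|}^{R}d\tau/\tau\sim-\log|t|$; the rescaled pieces produce Beta-function integrals which, via the reflection formula $\Gamma(s)\Gamma(1-s)=\pi/\sin(\pi s)$, evaluate precisely to $c_1=\Gamma(1+s)\sin(\pi s/2)/\pi$ and $c_2=\pi/\tan(\pi s/2)$.

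\textbf{Step 4 (Interior estimate; main obstacle).} For \eqref{numeret}, fix $x_0$ with $\delta(x_0)\geq\rho$ and split the defining integral for $(-\Delta)^{s/2}u(x_0)$ into an integral over $B_{\rho/2}(x_0)$—controlled by the interior $C^{s+\beta}$ bound for $u$ in hypothesis~(a), producing the factor $\rho^{-\beta}$—and its complement, controlled by $\|u\|_{L^\infty}$. The principal obstacle is Step~3: pinning down the \emph{exact} values of $c_1$ and $c_2$ rather than merely exhibiting log-type blow-up; this requires careful scale decomposition and evaluation of the resulting Beta integrals via $\Gamma$-function identities. A secondary difficulty is checking that the $C^{1,1}$ flattening correction and the Lipschitz factor $\psi^s$ in Step~2 are indeed absorbed into a $C^\alpha$ remainder against the logarithmic kernel (each extra power of $y_n$ turns a $\log^-y_n$ into a $C^\alpha$ function).
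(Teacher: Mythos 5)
Your Step 1 has a gap. The integral $\int_{\R^n}\frac{(v(x)-v(y))\,w(y)}{|x-y|^{n+s}}\,dy$ with $w=\delta_0^s$ is not absolutely convergent at $y=x$ when $x\in\Omega$. Near $y=x$ the integrand is of size $|x-y|^{\alpha}\cdot\delta(x)^s\cdot|x-y|^{-n-s}\sim|x-y|^{\alpha-n-s}$, which is integrable only if $\alpha>s$; but hypothesis (b) of Proposition \ref{intparts} only provides some small $\alpha\in(0,1)$, and generically $\alpha\le s$. Your stated domination by ``$\delta(y)^s|x-y|^{\alpha-n-s}$'' is not an integrable majorant at points in $\Omega$. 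To make this reduction rigorous one should not peel off $v$ alone but use the symmetric product rule $(-\Delta)^{s/2}(v\delta_0^s)=v\,(-\Delta)^{s/2}\delta_0^s+\delta_0^s\,(-\Delta)^{s/2}v-I_{s/2}(v,\delta_0^s)$. The crucial point is that the $I_{s/2}$ integrand carries the additional factor $|\delta_0^s(x)-\delta_0^s(y)|\lesssim|x-y|^s$, which restores integrability; one then controls the two remaining terms separately (this is exactly Lemmas \ref{lem-boundI2} and \ref{cosaiscalpha} together with the weighted-norm argument).

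The more fundamental obstacle is Step 2. The operator $(-\Delta)^{s/2}$ is not invariant under a $C^{1,1}$ diffeomorphism: in flattened coordinates $y=\Phi(x)$ the kernel becomes $|\Phi^{-1}(y)-\Phi^{-1}(y')|^{-n-s}$ (times a Jacobian), which near the diagonal behaves like the anisotropic kernel $|D\Phi^{-1}(y)(y-y')|^{-n-s}$, not a constant multiple of $|y-y'|^{-n-s}$. This is a change at the level of the principal symbol, not a lower-order perturbation, so it cannot simply be ``absorbed into a $C^\alpha$ remainder''; it could in principle alter the constants $c_1,c_2$ that you are trying to pin down, and your Step 3 computation is performed for the wrong operator unless you first control this discrepancy. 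The paper circumvents this entirely by never changing variables: it compares $\delta_0^s$, in the original coordinates, with the one-dimensional function $\phi_{x_0}(x)=\phi(-\nu(x_0)\cdot(x-x_0))$ built from the tangent hyperplane, and then (Lemma \ref{claim2}) shows that the error $w_{x_0}=\delta_0^s-\phi_{x_0}$ satisfies $|w_{x_0}(x+y)|\lesssim|y|^{2s}$ for $x$ on the normal ray (this $O(|y|^{2s})$ bound is precisely what $C^{1,1}$ regularity buys you), which suffices to make $(-\Delta)^{s/2}w_{x_0}$ H\"older along the normal; a separate gluing lemma (Claim \ref{claimMMM}) upgrades this to full H\"older continuity. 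Your identification of the 1D model and the route through $\Gamma$-function identities in Step 3 is correct and matches the paper's Lemma \ref{claim1} and the Appendix, and Step 4 is a plausible outline, but the main difficulty of the proposition is not the computation of $c_1,c_2$ as you suggest --- it is the reduction to the half-space model, which your flattening strategy does not actually achieve.
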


The values \eqref{1208} of the constants $c_1$ and $c_2$ in \eqref{**} arise in the expression for the $s/2$ fractional Laplacian, $(-\Delta)^{s/2}$, of the 1D function $(x_n^+)^s$,
and they are computed in the Appendix.

Writing the first integral in \eqref{partdificil} using spherical coordinates, equality \eqref{partdificil} reduces to a computation in dimension 1, stated in the following proposition. This result will be used with the function $\varphi$ in its statement being essentially the restriction of $(-\Delta)^{s/2} u$ to any ray through the origin. The constant $\gamma$ will be chosen to be any value in $(0,s)$.

\begin{prop}\label{propoperador} Let $A$ and $B$ be real numbers, and
\[\varphi(t)=A\log^-|t-1|+B\chi_{[0,1]}(t)+h(t),\]
where $\log^-t=\min\{\log t,0\}$ and $h$ is a function satisfying, for some constants $\alpha$ and $\gamma$ in $(0,1)$, and $C_0>0$, the following conditions:
\begin{itemize}
\item[(i)] $\|h\|_{C^{\alpha}([0,\infty))}\leq C_0$.
\item[(ii)] For all $\beta\in[\gamma,1+\gamma]$
\[\|h\|_{C^{\beta}((0,1-\rho)\cup(1+\rho,2))}\leq C_0 \rho^{-\beta}\qquad \textrm{for all}\ \  \rho\in(0,1).\]
\item[(iii)] $|h'(t)|\leq C_0 t^{-2-\gamma}$ and $|h''(t)|\leq C_0 t^{-3-\gamma}$ for all $t>2$.
\end{itemize}
Then,
\[-\left.\frac{d}{d\lambda}\right|_{\lambda=1^+}\int_{0}^{\infty} \varphi\left(\lambda t\right)\varphi\left(\frac{t}{\lambda}\right)dt=A^2\pi^2+B^2.\]

Moreover,  the limit defining this derivative is uniform among functions $\varphi$ satisfying (i)-(ii)-(iii) with given constants $C_0$, $\alpha$, and $\gamma$.
\end{prop}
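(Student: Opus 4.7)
The plan is to split $\varphi = \varphi_\star + h$ with $\varphi_\star(t) := A\log^-|t-1| + B\chi_{[0,1]}(t)$, and expand
\[
I(\lambda) := \int_0^\infty \varphi(\lambda t)\varphi(t/\lambda)\,dt = I_\star(\lambda) + R(\lambda),
\]
where $I_\star$ is the pure $\varphi_\star$--$\varphi_\star$ piece and $R$ collects the three terms with at least one factor of $h$. The substitution $u = \lambda t$ shows $I(\lambda) = I(1/\lambda)$, and the same symmetry holds for $I_\star$ (and hence for $R$). Consequently any piece that turns out to be $C^1$ at $\lambda = 1$ automatically has vanishing derivative there, so the whole nonzero right-derivative must come from non-$C^1$ behaviour induced by the singularities of $\varphi_\star$ at $t=1$.

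The first step is to show $R'(1^+) = 0$, uniformly in $h$ satisfying (i)--(iii) with fixed $C_0,\alpha,\gamma$. For the pure $h$-term, the Hölder bound (i) governs the integrand near $t=1$ while the interior estimates (ii) and the derivative decay (iii) control it away from $t=1$; a careful integration-by-parts argument (justified by mollification of $h$, or equivalently by using the representation $I_{hh}(\lambda) = \lambda^{-1}\int h(u)h(u/\lambda^2)\,du$) shows that $I_{hh}$ is $C^1$ at $\lambda = 1$, and combined with $I_{hh}(\lambda) = I_{hh}(1/\lambda)$ this forces $I_{hh}'(1)=0$. The mixed integrals $\int \varphi_\star(\lambda t)h(t/\lambda)\,dt$ and its mirror are handled similarly: split the integration into a neighborhood of $t=1$ (where $\varphi_\star$ is integrable and the $C^\alpha$ bound on $h$ suffices) and its complement (where $\varphi_\star$ is bounded and smooth and (ii)--(iii) apply), and show each contribution is $o(\lambda-1)$ with constants depending only on $C_0,\alpha,\gamma$.

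The core of the argument is the explicit computation of $I_\star$. Writing $L(t)=\log^-|t-1|$ and $J(t)=\chi_{[0,1]}(t)$, decompose $I_\star = A^2 I_{LL} + AB(I_{LJ}+I_{JL}) + B^2 I_{JJ}$. Direct computation gives $I_{JJ}(\lambda)=1/\lambda$ for $\lambda>1$, contributing $-B^2 I_{JJ}'(1^+) = B^2$. Each of $I_{LJ}$ and $I_{JL}$ can be evaluated in closed form via $s=\lambda t - 1$ and $s=t/\lambda-1$ respectively; although each summand individually contains an $(\lambda-1)\log(\lambda-1)$ term, these cancel in the sum $I_{LJ}+I_{JL}$, leaving derivative $0$. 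For $I_{LL}$ the substitution $t=(1+s)/\lambda$ (noting that on the relevant domain $\log^-$ equals $\log$) produces
\[
I_{LL}(\lambda) = \frac{1}{\lambda}K(\lambda^2-1) + \frac{4\log\lambda}{\lambda},\qquad K(\tau):=\int_{-1}^{1}\log|s|\log|s-\tau|\,ds,
\]
so that $I_{LL}'(1^+) = 2K'(0^+)$, reducing everything to a one-dimensional identity.

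To prove $K'(0^+)=-\pi^2/2$ I would change variables once more via $s=\tau/u$ to get
\[
K(\tau)-K(0) = \tau\log\tau\int_{|u|>\tau}\frac{\log|1-u|}{u^2}\,du - \tau\int_{|u|>\tau}\frac{\log|u|\log|1-u|}{u^2}\,du,
\]
then pair contributions from $u$ and $-u$ to replace $\log|1-u|$ by $\log|1-u^2|$, expand $\log(1-u^2) = -\sum u^{2n}/n$, and handle the tail $u>1$ by the further substitution $u=1/v$. The first integral evaluates to $O(\tau)$, contributing $o(\tau)$; the second tends to $\int_0^\infty \log u\,\log|1-u^2|/u^2\,du = \pi^2/2$, where $\pi^2/2$ arises as $(\tfrac{\pi^2}{4}-2\log 2) + (\tfrac{\pi^2}{4}+2\log 2)$ from the series identities $\sum_{n\ge1}\tfrac{1}{n(2n-1)^2}=\tfrac{\pi^2}{4}-2\log 2$ and $\sum_{n\ge1}\tfrac{1}{n(2n+1)^2}=4-2\log 2-\tfrac{\pi^2}{4}$ (both summed by partial fractions from $\sum (2n\pm1)^{-2}$ and telescopings built on $\log 2 = 1 - \sum_{n\ge1}[1/(2n)-1/(2n+1)]$). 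Summing gives $-I'(1^+) = A^2\pi^2 + B^2$. The main obstacle is this last computation: every intermediate piece contains $\tau\log\tau$ corrections (from both the inner region $|s|<\tau$ and the outer region $|s|>\tau$), and the clean answer emerges only after all these corrections cancel, which is bookkeeping that is easy to mishandle. A secondary obstacle is making the vanishing of $R'(1^+)$ rigorous given only the $C^\alpha$ regularity of $h$ near $t=1$; this is what forces the use of the symmetry $I(\lambda)=I(1/\lambda)$ rather than naively differentiating under the integral.
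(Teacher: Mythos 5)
Your overall architecture matches the paper's: decompose $\varphi=\varphi_\star+h$, kill the terms involving $h$, and compute the pure $\varphi_\star$--$\varphi_\star$ piece explicitly. The explicit computation, however, is a genuinely different route, and I verified that it is correct. Whereas the paper (Lemma 4.6) integrates $\log|\lambda t-1|\log|t/\lambda-1|$ via an explicit antiderivative built from the dilogarithm-type function $\Psi(t)=\int_0^t\log|r-1|/r\,dr$ and then takes the limit, you reduce $I_{LL}(\lambda)$ to the one-variable profile $K(\tau)=\int_{-1}^1\log|s|\log|s-\tau|\,ds$ via $t=(1+s)/\lambda$; the formula $I_{LL}(\lambda)=\lambda^{-1}K(\lambda^2-1)+4\lambda^{-1}\log\lambda$ is correct, as is the further reduction via $s=\tau/u$ (after checking $\int_{\R}\log|1-u|/u^2\,du=0$, which holds, so the $\tau\log\tau$ term is indeed $o(\tau)$). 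The series evaluation $\int_0^\infty\frac{\log u\,\log|1-u^2|}{u^2}\,du=\tfrac{\pi^2}{2}$ also checks out. So your computation of $-I_{LL}'(1^+)=\pi^2$ is a clean alternative to the paper's antiderivative argument and arguably more elementary.

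Where your proposal is materially weaker than the paper is in the treatment of the remainder $R$. The paper handles the mixed terms with a Cauchy--Schwarz trick that you would do well to internalize: it introduces the bilinear form $(w_1,w_2)_\lambda=-\tfrac{1}{2(\lambda-1)}\int_0^\infty\{w_1(\lambda t)w_2(t/\lambda)+w_1(t/\lambda)w_2(\lambda t)-2w_1 w_2\}\,dt$, observes that $(w,w)_\lambda\ge 0$ (by H\"older), hence Cauchy--Schwarz holds, and concludes $|(\varphi_\star,h)_\lambda|\le\sqrt{(\varphi_\star,\varphi_\star)_\lambda}\sqrt{(h,h)_\lambda}\to 0$ once $(h,h)_\lambda\to 0$ is established. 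This reduces everything to a single quantitative estimate $|(h,h)_\lambda|\le C|\lambda-1|^{\nu-1}$ with $\nu>1$, proved in the paper's Lemma 4.7/4.8 by splitting the $t$-line into the three regions $(1-\tau,1+\tau)$, $(0,1-\tau)\cup(1+\tau,2)$, $(2,\infty)$ with the calibration $\tau=(\lambda-1)^\theta$, $\theta\in(1/(1+\alpha),1)$, using (i) on the first region, (ii) on the second, and (iii) on the third. Your proposal gestures at exactly these three regimes but does not carry out the estimate, and the appeal to ``mollification'' and ``$C^1$ at $\lambda=1$'' is vaguer than what the hypotheses (i)--(iii) actually deliver: $h$ is only $C^\alpha$ near $t=1$, so no derivative is available there, and the correct statement is the quantitative modulus above, not literal $C^1$ regularity of $I_{hh}$. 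The symmetry $I(\lambda)=I(1/\lambda)$ is true and is a useful sanity check, but it does not by itself reduce the burden: to conclude $R'(1^+)=0$ from symmetry you must first establish two-sided differentiability of $R$ at $\lambda=1$, and that is precisely the estimate you are trying to avoid. Moreover, the uniformity claim in the proposition requires explicit dependence of the error constants only on $C_0,\alpha,\gamma$, which the Cauchy--Schwarz route gives you for free once Lemma 4.7 is in hand, but which your sketch would need to track by hand. So: correct skeleton and a correct, genuinely different explicit computation, but a real gap in making $R'(1^+)=0$ rigorous and uniform, which is where most of the work in the paper's proof lies.
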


From this proposition one obtains that the constant in the right hand side of \eqref{partdificil}, $\Gamma(1+s)^2$, is given by $c_1^2(\pi^2+c_2^2)$.
The constant $c_2$ comes from an involved expression and it is nontrivial to compute (see Proposition \ref{prop:Lap-s/2-delta-s} in Section 5 and the Appendix).
It was a surprise to us that its final value is so simple and, at the same time, that the Pohozaev constant $c_1^2(\pi^2+c_2^2)$ also simplifies and becomes $\Gamma(1+s)^2$.

Instead of computing explicitly the constants $c_1$ and $c_2$, an alternative way to obtain the constant in the Pohozaev identity consists of using an
explicit nonlinearity and solution to problem \eqref{eq} in a ball.
The one which is known \cite{G,BGR} is the solution to problem
\[\left\{ \begin{array}{rcll} (-\Delta)^s u &=&1&\textrm{in }B_r(x_0) \\
u&=&0&\textrm{in }\mathbb R^n\backslash B_r(x_0).\end{array}\right.\]
It is given by
\[u(x)=\frac{2^{-2s}\Gamma(n/2)}{\Gamma\left(\frac{n+2s}{2}\right)\Gamma(1+s)}\left(r^2-|x-x_0|^2\right)^s\qquad\textrm{in}\ \ B_r(x_0).\]
From this, it is straightforward to find the constant $\Gamma(1+s)^2$ in the Pohozaev identity; see Remark
\ref{A4} in the Appendix.

Using Theorem \ref{krylov} and Proposition \ref{intparts}, we can also deduce a Pohozaev identity for problem \eqref{eqlin}, that is, allowing the nonlinearity $f$ to depend also on $x$. In this case, the Pohozaev identity reads as follows.

\begin{prop}\label{proppoh} Let $\Omega$ be a bounded and $C^{1,1}$ domain, $f\in C^{0,1}_{\rm loc}(\overline \Omega\times \R)$, $u$ be a bounded solution of \eqref{eqlin}, and $\delta(x)={\rm dist}(x,\partial\Omega)$. Then
\[u/\delta^s|_{\Omega}\in C^{\alpha}(\overline\Omega)\qquad \textrm{for some }\ \alpha\in(0,1),\]
and the following identity holds
\[(2s-n)\int_{\Omega}uf(x,u)dx+2n\int_\Omega F(x,u)dx=\hspace{50mm}\]
\[\hspace{30mm}=\Gamma(1+s)^2\int_{\partial\Omega}\left(\frac{u}{\delta^s}\right)^2(x\cdot\nu)d\sigma-2\int_\Omega x\cdot F_x(x,u)dx,\]
where $F(x,t)=\int_0^tf(x,\tau)d\tau$, $\nu$ is the unit outward normal to $\partial\Omega$ at $x$, and $\Gamma$ is the Gamma function.
\end{prop}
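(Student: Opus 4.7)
The plan is to derive Proposition \ref{proppoh} as a rather direct consequence of Proposition \ref{intparts} applied to $u$, combined with the chain rule for $F(x,u(x))$ and an integration by parts in $\Omega$. By Theorem \ref{krylov}, a bounded solution $u$ of \eqref{eqlin} with $f\in C^{0,1}_{\mathrm{loc}}(\overline\Omega\times\R)$ satisfies hypotheses (a)-(b) of Proposition \ref{intparts}, and hypothesis (c) holds because $(-\Delta)^s u=f(x,u)$ is bounded (as $u$ is bounded and $f$ is locally Lipschitz in $\overline\Omega\times\R$). Thus the assertion $u/\delta^s|_\Omega\in C^\alpha(\overline\Omega)$ is immediate from Theorem \ref{krylov}(b), and Proposition \ref{intparts} yields
\[\int_\Omega(x\cdot\nabla u)f(x,u)\,dx=\frac{2s-n}{2}\int_\Omega uf(x,u)\,dx-\frac{\Gamma(1+s)^2}{2}\int_{\partial\Omega}\Bigl(\frac{u}{\delta^s}\Bigr)^{\!2}(x\cdot\nu)\,d\sigma.\]

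The remaining step is to rewrite the left hand side in terms of $F$. Using the chain rule pointwise in $\Omega$,
\[x\cdot\nabla\bigl[F(x,u(x))\bigr]=x\cdot F_x(x,u)+(x\cdot\nabla u)\,f(x,u),\]
so integrating by parts against the radial vector field and using that $F(x,0)=0$ together with $u\equiv 0$ on $\partial\Omega$ (so the boundary contribution vanishes), one obtains
\[\int_\Omega(x\cdot\nabla u)f(x,u)\,dx=-n\int_\Omega F(x,u)\,dx-\int_\Omega x\cdot F_x(x,u)\,dx.\]
Plugging this into the identity from Proposition \ref{intparts} and multiplying by $-2$ gives exactly the stated formula.

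The only genuinely technical point is to justify the integration by parts despite the fact that $\nabla u$ is not bounded up to $\partial\Omega$. Here I would use Theorem \ref{krylov}(a) with $\beta$ slightly less than $1$, which provides the estimate $|\nabla u(x)|\le C\,\delta(x)^{s-1}$; since $s>0$ this is integrable on $\Omega$, so $(x\cdot\nabla u)f(x,u)$ and $x\cdot F_x(x,u)$ are in $L^1(\Omega)$ and all integrals make sense. To rigorously integrate by parts I would work on $\Omega_\varepsilon=\{x\in\Omega:\delta(x)>\varepsilon\}$, where $u$ is smooth, and observe that the surface term on $\partial\Omega_\varepsilon$ is controlled by $\|F(\cdot,u)\|_{L^\infty(\partial\Omega_\varepsilon)}\cdot|\partial\Omega_\varepsilon|$; since $|u|\le C\delta^s$ by $u\in C^s(\R^n)$ and $F(x,0)=0$, this surface term is $O(\varepsilon^s)\to 0$ as $\varepsilon\downarrow 0$. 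The interior integrals converge by dominated convergence using the $L^1$ bounds above. This completes the proof, with no further obstacles beyond this standard boundary approximation.
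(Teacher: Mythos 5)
Your proof is correct and follows the same route as the paper: apply Theorem \ref{krylov} to verify the hypotheses of Proposition \ref{intparts}, substitute $(-\Delta)^s u = f(x,u)$, and then use the chain rule $x\cdot\nabla[F(x,u)]=x\cdot F_x(x,u)+(x\cdot\nabla u)f(x,u)$ together with integration by parts to rewrite $\int_\Omega(x\cdot\nabla u)f(x,u)\,dx$. The paper gives the integration-by-parts step in one line without detail; your careful justification via the sets $\Omega_\varepsilon=\{\delta>\varepsilon\}$, the gradient bound $|\nabla u|\le C\delta^{s-1}$ from Theorem \ref{krylov}(a), the bound $|u|\le C\delta^s$, and the vanishing of the $O(\varepsilon^s)$ surface term is a welcome clarification but does not change the argument. (Incidentally, the paper's displayed intermediate identity carries a typographical sign error on the boundary term; your signs are the correct ones and your final identity matches the statement.)
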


From this, we deduce nonexistence results for problem \eqref{eqlin} with supercritical nonlinearities $f$ depending also on $x$.
This has been done also in \cite{FW} for positive solutions. Our result allows changing sign solutions as well as a slightly larger class of nonlinearities (see Remark \ref{remFW}).

\begin{cor}\label{cornonexistence3} Let $\Omega$ be a bounded, $C^{1,1}$, and star-shaped domain, $f\in C^{0,1}_{\rm loc}(\overline \Omega\times \R)$, and $F(x,t)=\int_0^tf(x,\tau)d\tau$. If
\begin{equation}\label{supercritic2} \frac{n-2s}{2}\,uf(x,t)\geq nF(x,t)+x\cdot F_x(x,t)\qquad\textrm{for all}\ \ x\in\Omega\ \ \textrm{and}\ \ t\in\mathbb R, \end{equation}
then problem \eqref{eqlin} admits no positive bounded solution. Moreover, if the inequality in \eqref{supercritic2} is strict, then \eqref{eqlin} admits no nontrivial bounded solution.
\end{cor}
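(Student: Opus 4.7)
The plan is to extract the boundary integral from Proposition \ref{proppoh} and then pit the sign of the right-hand volume integral (controlled by the hypothesis) against the sign of the left-hand boundary integral (controlled by star-shapedness), in exactly the spirit of the classical Pohozaev nonexistence argument and of Corollary \ref{cornonexistence}. Rearranging the identity in Proposition \ref{proppoh} yields
\[\Gamma(1+s)^2\int_{\partial\Omega}\left(\frac{u}{\delta^s}\right)^2(x\cdot\nu)\,d\sigma = -2\int_\Omega\left[\frac{n-2s}{2}\,u\,f(x,u)-nF(x,u)-x\cdot F_x(x,u)\right]dx.\]
After translating so that $\Omega$ is star-shaped with respect to the origin, $x\cdot\nu\geq 0$ on $\partial\Omega$ and hence the left-hand side is $\geq 0$. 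By hypothesis \eqref{supercritic2} the bracket on the right is pointwise $\geq 0$, so the right-hand side is $\leq 0$. Both sides must therefore vanish.

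If \eqref{supercritic2} is strict (which is meaningful only for $t\neq 0$, since both sides of \eqref{supercritic2} are automatically zero at $t=0$), then the bracket is strictly positive at every point where $u(x)\neq 0$. Since a nontrivial bounded solution is continuous by Theorem \ref{krylov}(a), the set $\{u\neq 0\}$ has positive Lebesgue measure, making the volume integral strictly positive --- a contradiction. This rules out nontrivial bounded solutions in the strict case.

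For the non-strict case with a positive solution $u>0$ in $\Omega$, the hypotheses on $\Omega$ imply $x\cdot\nu>0$ on a nonempty relatively open $\Sigma\subset\partial\Omega$, so the vanishing of the boundary integral forces $u/\delta^s\equiv 0$ on $\Sigma$. To reach a contradiction one needs $u/\delta^s>0$ on $\partial\Omega$, i.e., a Hopf-type boundary principle for $(-\Delta)^s$. This can be proved by a barrier argument: since $u$ is bounded and $f$ is locally Lipschitz, $f(x,u(x))$ is bounded on $\Omega$, so $(-\Delta)^s u \geq -M$ for some $M\geq 0$; after touching $\partial\Omega$ from inside with a small ball $B_r(x_0)$ and comparing $u$ with a suitable multiple of the explicit torsion profile $(r^2-|x-x_0|^2)_+^s$ (which has constant fractional Laplacian), one obtains $u\geq c\,\delta^s$ near $\partial\Omega$ for some $c>0$.

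I expect the main obstacle to be exactly this Hopf-type estimate; the algebraic manipulation of the Pohozaev identity and the sign analysis are routine, whereas the strict positivity of $u/\delta^s$ up to $\partial\Omega$ is the one place where the fractional nature of the problem contributes genuine content beyond Proposition \ref{proppoh} itself. The regularity hypotheses required by Proposition \ref{proppoh} are supplied by Theorem \ref{krylov}, so that step is free.
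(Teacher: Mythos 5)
Your argument matches the paper's proof: rearrange the identity of Proposition \ref{proppoh}, pit the sign of the boundary term forced by star-shapedness against the sign of the volume term forced by \eqref{supercritic2}, conclude immediately in the strict case, and in the positive-solution case invoke a Hopf-type lower bound $u/\delta^s>0$ on $\partial\Omega$, for which the paper simply cites \cite{CRS} and Lemma 3.2 of \cite{RS}. The only loose spot is your parenthetical sketch of that Hopf estimate: comparing $u$ against $\epsilon$ times the torsion profile $\psi$ of an inner tangent ball, the bound $(-\Delta)^s u\ge -M$ alone yields $(-\Delta)^s(u-\epsilon\psi)\ge -M-\epsilon c_0$, which has the wrong sign to run a comparison principle; the actual argument must also use $u>0$ strictly on a fixed interior compact set so that the nonlocal part of $(-\Delta)^s$ supplies the missing positivity in the boundary annulus. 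Since you, like the paper, ultimately use the Hopf estimate as a cited ingredient rather than reproving it, this aside does not affect the correctness of your proof of the corollary.
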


\begin{rem}\label{remFW} For locally Lipschitz nonlinearities $f$, condition \eqref{supercritic2} is more general than the one required in \cite{FW} for their nonexistence result.
Namely, \cite{FW} assumes that for each $x\in\Omega$ and $t\in\mathbb R$, the map
\begin{equation}\label{condFW}\lambda\mapsto \lambda^{-\frac{n+2s}{n-2s}}f(\lambda^{-\frac{2}{n-2s}}x,\lambda t)\qquad  \textrm{is nondecreasing for }\lambda\in (0,1].\end{equation}
Such nonlinearities automatically satisfy \eqref{supercritic2}.

However, in \cite{FW} they do not need to assume any regularity on $f$ with respect to~$x$.
\end{rem}

The paper is organized as follows. In Section \ref{sec2}, using Propositions \ref{thlaps/2} and \ref{propoperador} (to be established later), we prove Proposition \ref{intparts} (the Pohozaev identity) for strictly star-shaped domains with respect to the origin.
We also establish the nonexistence results for supercritical nonlinearities, and this does not require any result from the rest of the paper.
In Section \ref{sec6} we establish Proposition \ref{thlaps/2}, while in Section \ref{sec7} we prove Proposition \ref{propoperador}.
Section \ref{sec8} establishes Proposition \ref{intparts} for non-star-shaped domains and all its consequences, which include Theorems \ref{thpoh} and \ref{corintparts} and the nonexistence results.
Finally, in the Appendix we compute the constants $c_1$ and $c_2$ appearing in Proposition \ref{thlaps/2}.

\section{Star-shaped domains: Pohozaev identity and nonexistence}
\label{sec2}

In this section we prove Proposition \ref{intparts}
for strictly star-shaped domains.
We say that $\Omega$ is strictly star-shaped if, for some $z_0\in\R^n$,
\begin{equation}\label{starshaped} (x-z_0)\cdot\nu>0\qquad \textrm{for all}\ \ x\in\partial\Omega.\end{equation}
The result for general $C^{1,1}$ domains will be a consequence of this strictly star-shaped case and will be proved in Section \ref{sec8}.

The proof in this section uses two of our results: Proposition \ref{thlaps/2} on the behavior of $(-\Delta)^{s/2}u$ near $\partial\Omega$ and the one dimensional computation of Proposition \ref{propoperador}.

The idea of the proof for the fractional Pohozaev identity is to use the integration by parts formula \eqref{fip} with $v=u_\lambda$, where
\[u_\lambda(x)=u(\lambda x), \quad \lambda>1,\]
and then differentiate the obtained identity (which depends on $\lambda$) with respect to $\lambda$ and evaluate at $\lambda=1$. However, this apparently simple formal procedure requires a quite involved analysis when it is put into practice. The hypothesis that $\Omega$ is star-shaped is crucially used in order that $u_\lambda$, $\lambda>1$, vanishes outside $\Omega$ so that \eqref{fip} holds.

\begin{proof}[Proof of Proposition \ref{intparts} for strictly star-shaped domains]
Let us assume first that $\Omega$ is strictly star-shaped with respect to the origin, that is, $z_0=0$.

Let us prove that
\begin{equation}\label{first}\int_\Omega(x\cdot \nabla u) (-\Delta)^su\, dx=\left.\frac{d}{d\lambda}\right|_{\lambda=1^+}\int_\Omega u_\lambda (-\Delta)^su\, dx,\end{equation}
where $\left.\frac{d}{d\lambda}\right|_{\lambda=1^+}$ is the derivative from the right side at $\lambda=1$.
Indeed, let $g=(-\Delta)^s u$.
By assumption (a) $g$ is defined pointwise in $\Omega$, and by assumption (c) $g\in L^\infty(\Omega)$.
Then, making the change of variables $y=\lambda x$ and using that $\textrm{supp}\,u_\lambda=\frac{1}{\lambda}\Omega\subset \Omega$ since $\lambda>1$, we obtain
\[\begin{split}\left.\frac{d}{d\lambda}\right|_{\lambda =1^+}&\int_\Omega u_\lambda(x)g(x) dx = \lim_{\lambda\downarrow 1}\int_\Omega \frac{u(\lambda x)-u(x)}{\lambda-1}g(x) dx\\
&\qquad=\lim_{\lambda\downarrow 1} \lambda^{-n}\int_{\lambda\Omega} \frac{u(y)-u(y/\lambda)}{\lambda-1} g(y/\lambda)dy\\
&\qquad=\lim_{\lambda\downarrow 1}\int_\Omega \frac{u(y)-u(y/\lambda)}{\lambda-1} g(y/\lambda)dy+\lim_{\lambda\downarrow1}\int_{(\lambda\Omega)\backslash\Omega}\frac{-u(y/\lambda)}{\lambda-1}
g(y/\lambda)dy.
\end{split}\]
By the dominated convergence theorem,
\[\lim_{\lambda\downarrow 1}\int_\Omega \frac{u(y)-u(y/\lambda)}{\lambda-1}g(y/\lambda)\,dy=\int_\Omega (y\cdot\nabla u)g(y)\,dy,\]
since $g\in L^\infty(\Omega)$, $|\nabla u(\xi)|\leq C\delta(\xi)^{s-1}\leq C\lambda^{1-s}\delta(y)^{s-1}$ for all $\xi$ in the segment joining $y$ and $y/\lambda$, and $\delta^{s-1}$ is integrable.
The gradient bound $|\nabla u(\xi)|\leq C\delta(\xi)^{s-1}$ follows from assumption (a) used with $\beta=1$.
Hence, to prove \eqref{first} it remains only to show that
\[\lim_{\lambda\downarrow 1}\int_{(\lambda\Omega)\backslash\Omega}\frac{-u(y/\lambda)}{\lambda-1}g(y/\lambda)dy=0.\]
Indeed, $|(\lambda\Omega)\backslash\Omega|\leq C(\lambda-1)$ and ---by (a)--- $u\in C^s(\R^n)$ and $u\equiv 0$ outside $\Omega$. Hence, $\|u\|_{L^\infty((\lambda\Omega)\backslash\Omega)}\rightarrow0$ as $\lambda\downarrow1$ and \eqref{first} follows.

Now, using the integration by parts formula \eqref{fip} with $v=u_\lambda$,
\begin{eqnarray*}\int_\Omega u_\lambda (-\Delta)^su\,dx
&=& \int_{\mathbb R^n}u_\lambda (-\Delta)^su\,dx \\
&=& \int_{\mathbb R^n}(-\Delta)^{s/2}u_{\lambda}(-\Delta)^{s/2}u\,dx \\
&=& \lambda^s\int_{\mathbb R^n}\left((-\Delta)^{s/2}u\right)(\lambda x)(-\Delta)^{s/2}u(x)dx \\
&=& \lambda^s\int_{\mathbb R^n}w_{\lambda}w\ dx,
\end{eqnarray*}
where
\[w(x)=(-\Delta)^{s/2}u(x)\qquad {\rm and}\qquad w_{\lambda}(x)=w(\lambda x).\]
With the change of variables $y=\sqrt{\lambda}x$ this integral becomes
\[\lambda^s\int_{\mathbb R^n}w_{\lambda}w\, dx=\lambda^{\frac{2s-n}{2}}\int_{\mathbb R^n}w_{\sqrt{\lambda}}w_{1/\sqrt{\lambda}}\,dy,\]
and thus
\[\int_\Omega u_\lambda(-\Delta)^su\, dx=\lambda^{\frac{2s-n}{2}}\int_{\mathbb R^n}w_{\sqrt{\lambda}}w_{1/\sqrt{\lambda}}\,dy.\]
Furthermore, this leads to
\begin{eqnarray}
\int_\Omega(\nabla u\cdot x)(-\Delta)^su\, dx&=&\left.\frac{d}{d\lambda}\right|_{\lambda=1^+}\left\{\lambda^{\frac{2s-n}{2}}\int_{\mathbb R^n}w_{\sqrt{\lambda}}w_{1/\sqrt{\lambda}}\,dy\right\}\nonumber \\
&=&\frac{2s-n}{2}\int_{\mathbb R^n}|(-\Delta)^{s/2}u|^2\,dx +\left.\frac{d}{d\lambda}\right|_{\lambda=1^+}\int_{\mathbb R^n}w_{\sqrt{\lambda}}w_{1/\sqrt{\lambda}}\,dy\nonumber \\
&=&\frac{2s-n}{2}\int_{\Omega}u(-\Delta)^s u\, dx+ \frac 12\left.\frac{d}{d\lambda}\right|_{\lambda=1^+}\int_{\mathbb R^n}w_{{\lambda}}w_{1/{\lambda}}\,dy\label{igualtatpaluego}
.\end{eqnarray}
Hence, it remains to prove that
\begin{equation}\label{derivadaIlambda}-\left.\frac{d}{d\lambda}\right|_{\lambda=1^+}I_\lambda
=\Gamma(1+s)^2\int_{\partial\Omega}\left(\frac{u}{\delta^s}\right)^2(x\cdot\nu)\,d\sigma,\end{equation}
where we have denoted
\begin{equation}\label{Ilambda} I_\lambda=\int_{\mathbb R^n}w_{{\lambda}}w_{1/{\lambda}}\,dy.\end{equation}

Now, for each $\theta\in S^{n-1}$ there exists a unique $r_\theta>0$ such that $r_\theta\theta\in\partial\Omega$.
Write the integral \eqref{Ilambda} in spherical coordinates and use the change of variables $t=r/r_\theta$:
\begin{eqnarray*}
\left.\frac{d}{d\lambda}\right|_{\lambda=1^+}I_\lambda&=&
\left.\frac{d}{d\lambda}\right|_{\lambda=1^+}\int_{S^{n-1}}d\theta\int_0^\infty r^{n-1}w(\lambda r\theta)w\left(\frac{r}{\lambda}\theta\right)dr\\
&=&\left.\frac{d}{d\lambda}\right|_{\lambda=1^+}\int_{S^{n-1}}r_\theta d\theta\int_0^\infty (r_\theta t)^{n-1}w(\lambda r_\theta t\theta)w\left(\frac{r_\theta t}{\lambda}\theta\right)dt\\
&=&\left.\frac{d}{d\lambda}\right|_{\lambda=1^+}\int_{\partial\Omega}(x\cdot \nu)d\sigma(x)\int_0^\infty t^{n-1}w(\lambda tx)w\left(\frac{tx}{\lambda}\right)dt,
\end{eqnarray*}
where we have used that
\[r_\theta^{n-1}\,d\theta = \left(\frac{x}{|x|}\cdot\nu\right)\,d\sigma=\frac{1}{r_\theta}(x\cdot\nu)\,d\sigma\]
with the change of variables $S^{n-1}\rightarrow \partial\Omega$ that maps every point in $S^{n-1}$ to its radial projection on $\partial\Omega$, which is
unique because of the strictly star-shapedness of $\Omega$.

Fix $x_0\in \partial\Omega$ and define
\[\varphi(t)=t^{\frac{n-1}{2}}w\left(t x_0\right)=t^{\frac{n-1}{2}}(-\Delta)^{s/2}u(t x_0).\]
By Proposition \ref{thlaps/2},
\[\varphi(t)=c_1\{\log^-\delta(tx_0)+c_2\chi_{[0,1]}\}v(tx_0)+h_0(t)\]
in $[0,\infty)$, where $v$ is a $C^\alpha(\R^n)$ extension of $u/\delta^s|_\Omega$ and $h_0$ is a $C^{\alpha}([0,\infty))$ function. Next we will modify this expression in order to apply Proposition \ref{propoperador}.

Using that $\Omega$ is $C^{1,1}$ and strictly star-shaped, it is not difficult to see that $\frac{|r-r_\theta|}{\delta(r\theta)}$ is a Lipschitz function of $r$ in $[0,\infty)$ and bounded below by a positive constant (independently of $x_0$). Similarly, $\frac{|t-1|}{\delta(t x_0)}$ and $\frac{\min\{|t-1|,1\}}{\min\{\delta(tx_0),1\}}$ are positive and Lipschitz functions of $t$ in $[0,\infty)$. Therefore,
\[\log^-|t-1|- \log^- \delta(tx_0)\]
is Lipschitz in $[0,\infty)$ as a function of $t$.

Hence, for $t\in[0,\infty)$,
\[\varphi(t)=c_1\{\log^-|t-1|+c_2\chi_{[0,1]}\}v(tx_0)+h_1(t),\]
where $h_1$ is a $C^{\alpha}$ function in the same interval.

Moreover, note that the difference
\[v(tx_0)-v(x_0)\]
is $C^{\alpha}$ and vanishes at $t=1$. Thus,
\[\varphi(t)=c_1\{\log^-|t-1|+c_2\chi_{[0,1]}(t)\}v(x_0)+h(t)\]
holds in all $[0,\infty)$, where $h$ is  $C^{\alpha}$ in $[0,\infty)$ if we slightly decrease $\alpha$ in order to kill the logarithmic singularity.
This is condition (i) of Proposition \ref{propoperador}.

From the expression
\[h(t)=t^{\frac{n-1}{2}}(-\Delta)^{s/2} u \left(t x_0\right)-c_1\{\log^-|t-1|+c_2\chi_{[0,1]}(t)\}v(x_0)\]
and from \eqref{numeret} in Proposition \ref{thlaps/2}, we obtain that $h$ satisfies condition (ii) of Proposition \ref{propoperador} with $\gamma=s/2$.

Moreover, condition (iii) of Proposition \ref{propoperador} is also satisfied.
Indeed, for $x\in \mathbb R^n\backslash(2\Omega)$ we have
\[(-\Delta)^{s/2}u(x)=c_{n,\frac s2}\int_{\Omega}\frac{-u(y)}{|x-y|^{n+s}}dy\]
and hence
\[|\partial_{i}(-\Delta)^{s/2}u(x)|\leq C|x|^{-n-s-1}\ \ \textrm{ and }\ \ |\partial_{ij}(-\Delta)^{s/2}u(x)|\leq C|x|^{-n-s-2}.\]
This yields $|\varphi'(t)|\leq Ct^{\frac{n-1}{2}-n-s-1}\leq Ct^{-2-\gamma}$ and $|\varphi''(t)|\leq Ct^{\frac{n-1}{2}-n-s-2}\leq Ct^{-3-\gamma}$ for $t>2$.

Therefore we can apply Proposition \ref{propoperador} to obtain
\[\left.\frac{d}{d\lambda}\right|_{\lambda=1^+}\int_0^\infty \varphi(\lambda t)\varphi\left(\frac{t}{\lambda}\right)dt=\left(v(x_0)\right)^2c_1^2(\pi^2+c_2^2),\]
and thus
\[\left.\frac{d}{d\lambda}\right|_{\lambda=1^+}\int_0^\infty t^{n-1}w(\lambda tx_0)w\left(\frac{tx_0}{\lambda}\right)dt=\left(v(x_0)\right)^2c_1^2(\pi^2+c_2^2)\]
for each $x_0\in\partial\Omega$.

Furthermore, by uniform convergence on $x_0$ of the limit defining this derivative (see Proposition \ref{lema1} in Section \ref{sec7}), this leads to
\[\left.\frac{d}{d\lambda}\right|_{\lambda=1^+}I_\lambda=c_1^2(\pi^2+c_2^2)\int_{\partial\Omega}(x_0\cdot\nu)
\left(\frac{u}{\delta^s}(x_0)\right)^2dx_0.\]
Here we have used that, for $x_0\in\partial\Omega$, $v(x_0)$ is uniquely defined by continuity as
\[\left(\frac{u}{\delta^s}\right)(x_0)=\lim_{x\rightarrow x_0,\ x\in\Omega} \frac{u(x)}{\delta^s(x)}.\]

Hence, it only remains to prove that
\[c_1^2(\pi^2+c_2^2)=\Gamma(1+s)^2.\]
But
\[c_1=\frac{\Gamma(1+s)\sin\left(\frac{\pi s}{2}\right)}{\pi}\qquad{\rm and}\qquad c_2=\frac{\pi}{\tan\left(\frac{\pi s}{2}\right)},\]
and therefore
\begin{eqnarray*} c_1^2(\pi^2+c_2^2)&=&\frac{\Gamma(1+s)^2\sin^2\left(\frac{\pi s}{2}\right)}{\pi^2}\left(\pi^2+\frac{\pi^2}{\tan^2\left(\frac{\pi s}{2}\right)}\right)\\
&=&\Gamma(1+s)^2\sin^2\left(\frac{\pi s}{2}\right)\left(1+\frac{\cos^2\left(\frac{\pi s}{2}\right)}{\sin^2\left(\frac{\pi s}{2}\right)}\right)\\
&=&\Gamma(1+s)^2.
\end{eqnarray*}

Assume now that $\Omega$ is strictly star-shaped with respect to a point $z_0\neq0$.
Then, $\Omega$ is strictly star-shaped with respect to all points $z$ in a neighborhood of $z_0$.
Then, making a translation and using the formula for strictly star-shaped domains with respect to the origin, we deduce
\begin{equation}\label{nom}\begin{split}\int_\Omega\left\{(x-z)\cdot\nabla u\right\}(-\Delta)^su\, dx=\frac{2s-n}{2}&\int_{\Omega}u(-\Delta)^su\, dx+\\
&-\frac{\Gamma(1+s)^2}{2}\int_{\partial\Omega}\left(\frac{u}{\delta^s}\right)^2(x-z)\cdot\nu\, d\sigma\end{split}\end{equation}
for each $z$ in a neighborhood of $z_0$.
This yields
\begin{equation}\label{eqintparts}\int_\Omega u_{x_i}(-\Delta)^su\, dx=-\frac{\Gamma(1+s)^2}{2}\int_{\partial\Omega}\left(\frac{u}{\delta^s}\right)^2\nu_i\, d\sigma\end{equation}
for $i=1,...,n$. Thus, by adding to \eqref{nom} a linear combination of \eqref{eqintparts}, we obtain
\[\int_\Omega(x\cdot\nabla u)(-\Delta)^su\, dx=\frac{2s-n}{2}\int_{\Omega}u(-\Delta)^su\, dx-\frac{\Gamma(1+s)^2}{2}\int_{\partial\Omega}\left(\frac{u}{\delta^s}\right)^2x\cdot\nu\, d\sigma.\]
\end{proof}

Next we prove the nonexistence results of Corollaries \ref{cornonexistence}, \ref{cornonexistence2}, and \ref{cornonexistence3} for supercritical nonlinearities in star-shaped domains.
Recall that star-shaped means
$x\cdot\nu\ge 0$ for all $x\in\partial\Omega$.
Although these corollaries follow immediately from Proposition \ref{proppoh} ---as we will see in Section \ref{sec8}---, we give here a short proof of their second part, i.e., nonexistence when the inequality \eqref{supercritic} or \eqref{supercritic2} is strict.
That is, we establish the nonexistence of nontrivial solutions for supercritical nonlinearities (not including the critical case).

Our proof follows the method above towards the Pohozaev identity but does not require the full strength of the identity.
In addition, in terms of regularity results for the equation, the proof only needs an easy gradient estimate for solutions $u$.
Namely,
\[|\nabla u|\leq C\delta^{s-1}\ \mbox{ in }\ \Omega,\]
which follows from part (a) of Theorem \ref{krylov}, proved in \cite{RS}.

\begin{proof}[Proof of Corollaries \ref{cornonexistence}, \ref{cornonexistence2}, and \ref{cornonexistence3} for supercritical nonlinearities] We only have to prove Corollary \ref{cornonexistence3}, since Corollaries \ref{cornonexistence} and \ref{cornonexistence2} follow immediately from it by setting $f(x,u)=f(u)$ and $f(x,u)=|u|^{p-1}u$ respectively.

Let us prove that if $\Omega$ is star-shaped and $u$ is a bounded solution of \eqref{eqlin}, then
\begin{equation}
\frac{2s-n}{2}\int_{\Omega}uf(x,u)dx+n\int_\Omega F(x,u)dx-\int_\Omega x\cdot F_x(x,u)dx\geq 0.
\end{equation}
For this, we follow the beginning of the proof of Proposition \ref{intparts} (given above) to obtain \eqref{igualtatpaluego}, i.e., until the identity
\[\int_\Omega(\nabla u\cdot x)(-\Delta)^su\, dx=\frac{2s-n}{2}\int_{\Omega}u(-\Delta)^s u\, dx+\frac 12\left.\frac{d}{d\lambda}\right|_{\lambda=1^+}I_\lambda,\]
where
\[ I_\lambda=\int_{\mathbb R^n}w_{{\lambda}}w_{1/{\lambda}}\,dx,\qquad w(x)=(-\Delta)^{s/2}u(x), \qquad\mbox{and}\qquad w_{\lambda}(x)=w(\lambda x).\]

This step of the proof only need the star-shapedness of $\Omega$ (and not the strictly star-shapedness) and the regularity result $|\nabla u|\leq C\delta^{s-1}$ in $\Omega$, which follows from Theorem \ref{krylov}, proved in \cite{RS}.

Now, since $(-\Delta)^s u=f(x,u)$ in $\Omega$ and
\[(\nabla u\cdot x)(-\Delta)^su=x\cdot \nabla F(x,u)-x\cdot F_x(x,u),\]
by integrating by parts we deduce
\[-n\int_\Omega F(x,u)dx-\int_\Omega x\cdot F_x(x,u)dx=\frac{2s-n}{2}\int_{\Omega}uf(x,u)dx+\frac 12\left.\frac{d}{d\lambda}\right|_{\lambda=1^+}I_\lambda.\]
Therefore, we only need to show that
\begin{equation}\label{negatiu}\left.\frac{d}{d\lambda}\right|_{\lambda=1^+}I_\lambda\leq0.\end{equation}
But applying H\"older's inequality, for each $\lambda>1$ we have
\[I_\lambda\leq\|w_\lambda\|_{L^2(\mathbb R^n)}\|w_{1/{\lambda}}\|_{L^2(\R^n)}=\|w\|_{L^2(\R^n)}^2=I_1,\]
and \eqref{negatiu} follows.
\end{proof}

\begin{rem} For this nonexistence result the regularity of the domain $\Omega$ is only used for the estimate $|\nabla u|\leq C\delta^{s-1}$.
This estimate only requires  $\Omega$ to be Lipschitz and satisfy an exterior ball condition; see \cite{RS}. In particular, our nonexistence result for supercritical nonlinearities applies to any convex domain, such as a square for instance.
\end{rem}

\begin{rem}\label{remR^n}
When $\Omega=\R^n$ or when $\Omega$ is a star-shaped domain with respect to infinity, there are two recent nonexistence results for subcritical nonlinearities. They use the method of moving spheres to prove nonexistence of bounded positive solutions in these domains.
The first result is due to A. de Pablo and U. S\'anchez \cite{PS}, and they obtain nonexistence of bounded positive solutions to $(-\Delta)^s u=u^p$ in all of $\R^n$, whenever $s>1/2$ and $1<p<\frac{n+2s}{n-2s}$.
The second result, by M. Fall and T. Weth \cite{FW}, gives nonexistence of bounded positive solutions of \eqref{eqlin} in star-shaped domains with respect to infinity for subcritical nonlinearities.

Our method in the previous proof can also be used to prove nonexistence results for problem \eqref{eqp} in star-shaped domains with respect to infinity or in the whole $\R^n$.
However, to ensure that the integrals appearing in the proof are well defined, one must assume some decay on $u$ and $\nabla u$. For instance, in the supercritical case $p> \frac{n+2s}{n-2s}$ we obtain that the only solution to  $(-\Delta)^s u=u^p$ in all of $\R^n$ decaying as
\[|u|+|x\cdot\nabla u|\leq \frac{C}{1+|x|^{\beta}},\]
with $\beta>\frac{n}{p+1}$, is $u\equiv 0$.

In the case of the whole $\R^n$, there is an alternative proof of the nonexistence of solutions which decay fast enough at infinity. It consists of using a Pohozaev identity in all of $\R^n$, that is easily deduced from the pointwise equality
\[(-\Delta)^s(x\cdot \nabla u)=2s(-\Delta)^su+x\cdot \nabla (-\Delta)^su.\]

The classification of solutions in the whole $\R^n$ for the critical exponent $p=\frac{n+2s}{n-2s}$ was obtained by W. Chen, C. Li, and B. Ou in \cite{CLO}. They are of the form
\[u(x)=c\left(\frac{\mu}{\mu^2+|x-x_0|^2}\right)^{\frac{n-2s}{2}},\]
where $\mu$ is any positive parameter and $c$ is a constant depending on $n$ and $s$.
\end{rem}

\section{Behavior of $(-\Delta)^{s/2}u$ near $\partial\Omega$}\label{sec6}

The aim of this section is to prove Proposition \ref{thlaps/2}. We will split this proof into two propositions. The first one is the following, and compares the behavior of $(-\Delta)^{s/2}u$ near $\partial\Omega$ with the one of $(-\Delta)^{s/2}\delta_0^s$, where $\delta_0(x)=\textrm{dist}(x,\partial\Omega)\chi_\Omega(x)$.

\begin{prop}\label{proplaps2} Let $\Omega$ be a bounded and $C^{1,1}$ domain, $u$ be a function satisfying {\rm (b)} in Proposition \ref{intparts}.
Then, there exists a $C^{\alpha}(\R^n)$ extension $v$ of $u/\delta^s|_\Omega$ such that
\[(-\Delta)^{s/2}u(x)=(-\Delta)^{s/2}\delta_0^s(x)v(x)+h(x)\ \mbox{ in }\ \R^n,\]
where $h\in C^{\alpha}(\R^n)$.
\end{prop}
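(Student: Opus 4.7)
The plan is to exploit the factorization $u = v\,\delta_0^s$, which holds on all of $\mathbb{R}^n$ for any extension $v$ of $u/\delta^s|_\Omega$ (both sides equal $u$ in $\Omega$ and vanish outside), and to view the claimed identity as a fractional Leibniz/commutator formula for $(-\Delta)^{s/2}$ applied to the product $v\,\delta_0^s$. First, I would construct $v$: combining hypothesis (b) with a Whitney-type extension across $\partial\Omega$ yields $v \in C^\alpha(\mathbb{R}^n)$, compactly supported, smooth outside $\overline{\Omega}$, equal to $u/\delta^s$ in $\Omega$, and satisfying interior estimates inherited from (b). Note also that $\delta_0^s$ is Hölder continuous of order $s$ on all of $\mathbb{R}^n$, since $\delta_0$ itself is Lipschitz.

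The core computation is then the identity
\[(-\Delta)^{s/2}u(x) - v(x)(-\Delta)^{s/2}\delta_0^s(x) = c_{n,s/2}\,\mathrm{PV}\!\int_{\mathbb{R}^n}\frac{\bigl(v(x)-v(y)\bigr)\delta_0^s(y)}{|x-y|^{n+s}}\,dy,\]
which follows from substituting $u = v\delta_0^s$ into the singular-integral definition of $(-\Delta)^{s/2}u$. Writing $\delta_0^s(y) = \delta_0^s(x) - (\delta_0^s(x)-\delta_0^s(y))$ inside the PV integral splits this quantity into
\[h(x) \;=\; \delta_0^s(x)(-\Delta)^{s/2}v(x) \;-\; c_{n,s/2}\!\int_{\mathbb{R}^n}\frac{(v(x)-v(y))(\delta_0^s(x)-\delta_0^s(y))}{|x-y|^{n+s}}\,dy,\]
where both terms are now absolutely convergent: the second integrand is bounded by $C|x-y|^{\alpha+s-n-s} = C|x-y|^{\alpha-n}$ near $x$, and $(-\Delta)^{s/2}v(x)$ is defined pointwise off $\partial\Omega$ thanks to the interior regularity of $v$.

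What remains is to prove that each of the two summands defines a function of $C^\alpha(\mathbb{R}^n)$. The commutator integral
\[R(x):=\int_{\mathbb{R}^n}\frac{(v(x)-v(y))(\delta_0^s(x)-\delta_0^s(y))}{|x-y|^{n+s}}\,dy\]
is a classical paraproduct-type object: its $C^\alpha$ regularity follows from a standard two-region split, estimating $R(x_1)-R(x_2)$ by cutting $\mathbb{R}^n$ at the scale $r=|x_1-x_2|$ and inserting $|v(x)-v(y)|\le C|x-y|^\alpha$ and $|\delta_0^s(x)-\delta_0^s(y)|\le C|x-y|^s$. The product $\delta_0^s(x)(-\Delta)^{s/2}v(x)$ is handled using the interior part of hypothesis (b): it forces a blow-up bound of the form $|(-\Delta)^{s/2}v(x)| \le C\,\delta(x)^{\alpha-s}$ as $x\to\partial\Omega$ (with an analogous smooth bound from outside $\overline{\Omega}$, where $v$ is smooth by the Whitney construction), and multiplication by $\delta_0^s(x)$ absorbs this singularity to yield a $C^\alpha$ contribution.

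The hard part is precisely the product $\delta_0^s\,(-\Delta)^{s/2}v$: although each factor is individually well behaved, their combination requires a careful match between the blow-up rate of $(-\Delta)^{s/2}v$ from both sides of $\partial\Omega$ and the vanishing of $\delta_0^s$ on $\partial\Omega$. Quantifying this in a $C^\alpha$ norm forces one to use the full strength of the interior estimates of (b)—both for $v$ itself and for $(-\Delta)^{s/2}v$ at scales comparable with $\delta(x)$—in order to upgrade pointwise cancellation into a genuine global Hölder bound on the product.
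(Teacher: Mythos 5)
Your proposal follows essentially the same route as the paper: you write the same fractional Leibniz decomposition $(-\Delta)^{s/2}(v\,\delta_0^s)=v(-\Delta)^{s/2}\delta_0^s+\delta_0^s(-\Delta)^{s/2}v-I_{s/2}(v,\delta_0^s)$, estimate the commutator by a two--scale split exactly as in Lemma \ref{lem-boundI2}, and absorb the interior weighted estimate for $(-\Delta)^{s/2}v$ (the paper invokes Lemma \ref{cosaiscalpha}, i.e.\ Lemma 4.3 of \cite{RS}) by the vanishing factor $\delta_0^s$. The only cosmetic differences are the choice of extension (Whitney--type versus the infimal--convolution of Lemma \ref{ext}) and the final H\"older exponent, which in both arguments may need to be taken slightly smaller than the $\alpha$ of hypothesis (b) --- the paper concedes this with its ``(taking a smaller $\alpha$ if necessary).''
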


Once we know that the behavior of $(-\Delta)^{s/2}u$ is comparable to the one of $(-\Delta)^{s/2}\delta_0^s$, Proposition \ref{thlaps/2} reduces to the following result, which gives the behavior of $(-\Delta)^s\delta_0^s$ near $\partial\Omega$.

\begin{prop}\label{prop:Lap-s/2-delta-s} Let $\Omega$ be a bounded and $C^{1,1}$ domain, $\delta(x)={\rm dist}(x,\partial\Omega)$, and $\delta_0=\delta\chi_\Omega$.
Then,
\[(-\Delta)^{s/2}\delta_0^s(x)=c_1\left\{\log^- \delta(x)+c_2\chi_\Omega(x)\right\}+h(x)\ \mbox{ in }\ \R^n,\]
where $c_1$ and $c_2$ are constants, $h$ is a $C^{\alpha}(\R^n)$ function, and $\log^-t=\min\{\log t,0\}$.
The constants $c_1$ and $c_2$ are given by
\[c_1=c_{1,\frac s2}\qquad\mbox{and}\qquad c_2=\int_0^{\infty}\left\{\frac{1-z^s}{|1-z|^{1+s}}+\frac{1+z^s}{|1+z|^{1+s}}\right\}dz,\]
where $c_{n,s}$ is the constant appearing in the singular integral expression \eqref{laps} for $(-\Delta)^{s}$ in dimension $n$.
\end{prop}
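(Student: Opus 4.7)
The plan is to reduce the computation to a one-dimensional calculation for $(-\Delta)^{s/2}$ applied to $(t^+)^s$, by flattening the boundary locally and integrating out the tangential directions; the constants $c_1$ and $c_2$ will then appear naturally from the 1D principal-value integral.

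Away from $\partial\Omega$ the identity is trivial. If $\delta(x)\ge\rho_0>0$, then $\delta_0^s$ is $C^{1,1}$ on each side of the boundary, the singular integral defining $(-\Delta)^{s/2}\delta_0^s(x)$ depends smoothly on $x$, and both $\log^-\delta(x)$ and $\chi_\Omega(x)$ are smooth in such a region; the remainder $h$ absorbs everything. The content lies in a neighborhood of each $x_0\in\partial\Omega$.

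Near such an $x_0$, I would use the $C^{1,1}$ regularity of $\partial\Omega$ to flatten the boundary by a $C^{1,1}$ diffeomorphism. In straightened coordinates $(y',y_n)$, with $\{y_n=0\}$ the boundary and $\{y_n>0\}$ the interior, $\delta_0(y)$ agrees with $y_n^+$ up to a positive $C^{0,1}$ multiplier coming from the Jacobian and the curvature of $\partial\Omega$; hence locally $\delta_0^s=(y_n^+)^s\rho$ for some positive function $\rho$ of better-than-$C^\alpha$ regularity. For the model $(y_n^+)^s$, I compute $(-\Delta)^{s/2}$ by integrating out the tangential variables using
\[
\int_{\R^{n-1}}(|z'|^2+a^2)^{-(n+s)/2}\,dz'=\kappa_n\,|a|^{-1-s},
\]
which collapses the problem onto the one-dimensional fractional Laplacian of $(t^+)^s$, with the normalizing constants combining to produce precisely the prefactor $c_{1,s/2}=c_1$. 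Substituting $y=xz$ in the 1D principal-value integral reveals that the integrand is formally scale-invariant but fails to be integrable at $z=\infty$ (as $(z^+)^s/|1-z|^{1+s}\sim 1/z$), so that a cutoff at scale $1/|x|$ introduces the $c_1\log|x|$ singularity, with a jump across $x=0$ of size $c_1 c_2$. The two halves of the integral split into the explicit quantities in the statement of the proposition, with the detailed evaluation of $c_1$ and $c_2$ deferred to the Appendix.

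It remains to control the perturbations arising from the multiplicative factor $\rho$, the localizing cutoff, and the discrepancy between $|y_n|$ and $\delta(x)$ after pulling back through the flattening. Each of these produces only a $C^\alpha(\R^n)$ contribution to $(-\Delta)^{s/2}$: the comparison $\log|y_n|-\log^-\delta$ is Lipschitz away from the set $\{\delta=1\}$, and multiplication by $\rho-\rho(x_0)$ gains regularity against $(y_n^+)^s$, so that $(-\Delta)^{s/2}\bigl[(y_n^+)^s(\rho-\rho(x_0))\bigr]$ is Hölder by standard estimates on the singular integral. The main obstacle will be precisely this error control: one must verify that the bi-$C^{1,1}$ change of variables and the multiplicative factor $\rho$ neither change the leading coefficient $c_1$ nor the jump constant $c_1c_2$, and produce no additional logarithmic contribution. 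Patching the local analyses via a finite covering of $\partial\Omega$ and combining with the interior smoothness from the first step then yields the desired global identity with $h\in C^\alpha(\R^n)$.
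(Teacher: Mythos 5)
Your overall strategy (reduce to a one–dimensional computation of $(-\Delta)^{s/2}$ on a truncated $s$-power, then show the remainder is H\"older) is the same as the paper's, and your heuristic for the constants is right. However, the step where you flatten $\partial\Omega$ by a $C^{1,1}$ diffeomorphism and claim the change of variables "produces only a $C^\alpha(\R^n)$ contribution to $(-\Delta)^{s/2}$" is a genuine gap, and I believe it is the wrong route. Since $(-\Delta)^{s/2}$ is nonlocal, conjugating it by a diffeomorphism $\Phi$ does not produce $(-\Delta)^{s/2}$ plus lower-order terms: it produces a different nonlocal operator with a variable kernel of the form $K(y,z)|y-z|^{-n-s}$, and showing that the discrepancy between this operator and $(-\Delta)^{s/2}$ applied to $(y_n^+)^s\rho$ is $C^\alpha$ is essentially as hard as (and messier than) the original problem. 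You label this "the main obstacle" but offer no mechanism for resolving it, so the argument does not close.

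The paper sidesteps the issue entirely by \emph{not} flattening. For each $x_0\in\partial\Omega$ it compares $\delta_0^s$, in the original coordinates, with the one-dimensional model
\[
\phi_{x_0}(x)=\phi\bigl(-\nu(x_0)\cdot(x-x_0)\bigr),
\]
a truncated $s$-power of the signed distance to the tangent plane at $x_0$. The $n$-dimensional $(-\Delta)^{s/2}$ of a function of one linear coordinate reduces to the 1D $(-\Delta)^{s/2}$ with the constant $c_{1,s/2}$ (this is where your tangential integration formula really belongs, and it requires no change of variables on the domain); the 1D computation with the explicit truncation $\phi=x^s\chi_{(0,\rho_0)}+\rho_0^s\chi_{(\rho_0,\infty)}$ (Lemma~\ref{claim1}) then produces $c_1\log|t|+c_1c_2\chi_{(0,\infty)}(t)$ plus a $C^s$ remainder, via an explicit L'H\^opital analysis rather than a cutoff-at-scale-$1/|x|$ heuristic. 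The difference $w_{x_0}=\delta_0^s-\phi_{x_0}$ is controlled directly: along the normal segment $S_{x_0}$ one has $|w_{x_0}(x+y)|\le C|y|^{2s}$ because $\delta_\pm$ and the tangent-plane distance agree to first order there with $C^{1,1}$-controlled second-order error (Lemma~\ref{claim2}), and a standard split of the singular integral yields a $C^{s/2}$ modulus of continuity of $(-\Delta)^{s/2}w_{x_0}$ \emph{along the normal direction only}. Finally, your proposal has no analogue of the two-scale patching lemma (Claim~\ref{claimMMM}), which is needed to pass from normal-direction H\"older estimates plus a crude interior gradient bound $|\nabla(-\Delta)^{s/2}\delta_0^s|\lesssim\delta^{-n-s}$ to a genuine $C^\gamma(\R^n)$ bound on $h$; "patching the local analyses via a finite covering" is not enough, because the normal-direction estimate alone does not control tangential oscillation near $\partial\Omega$.
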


The fact that the constants $c_1$ and $c_2$ given by Proposition \ref{prop:Lap-s/2-delta-s} coincide with the ones from Proposition \ref{thlaps/2} is proved in the Appendix.

In the proof of Proposition \ref{proplaps2} we need to compute $(-\Delta)^{s/2}$ of the product $u=\delta_0^s v$. For it, we will use the following elementary identity, which can be derived from \eqref{laps}:
\[(-\Delta)^s(w_1w_2)=w_1(-\Delta)^sw_2+w_2(-\Delta)^sw_1-I_s(w_1,w_2),\]
where
\begin{equation}\label{Is}I_s(w_1,w_2)(x)= c_{n,s}\textrm{PV}\int_{\R^n} \frac{\bigl(w_1(x)-w_1(y)\bigr)\bigl(w_2(x)-w_2(y)\bigr)}{|x-y|^{n+2s}}\,dy.
\end{equation}
Next lemma will lead to a H\"older bound for $I_s(\delta_0^s,v)$.

\begin{lem}\label{lem-boundI2}
Let $\Omega$ be a bounded domain and $\delta_0={\rm dist}(x,\R^n\setminus\Omega)$. Then, for each $\alpha\in(0,1)$ the following a priori bound holds
\begin{equation}\label{eq:bound-I2}
\|I_{s/2}(\delta_0^s,w)\|_{C^{\alpha/2}(\R^n)}\le C[w]_{C^\alpha(\R^n)},
\end{equation}
where the constant $C$ depends only on $n$, $s$, and $\alpha$.
\end{lem}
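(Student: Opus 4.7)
The plan is to first establish a global $s$-H\"older bound for $\delta_0^s$, use it to obtain a pointwise $L^\infty$ bound on $I_{s/2}(\delta_0^s, w)$, and finally prove the $C^{\alpha/2}$ estimate by a near/far splitting at any two nearby points.

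The key preliminary observation is the global estimate
\[|\delta_0^s(x) - \delta_0^s(y)| \le |x-y|^s \qquad \textrm{for all } x, y \in \R^n.\]
This follows because: on $\Omega$, $\delta$ is $1$-Lipschitz and $t\mapsto t^s$ is $s$-H\"older on $[0,\infty)$, giving $|\delta(x)^s - \delta(y)^s| \le |\delta(x) - \delta(y)|^s \le |x-y|^s$; and if $x \in \Omega$, $y\notin\Omega$, the segment joining them crosses $\partial\Omega$, hence $\delta(x) \le |x-y|$ and $\delta_0^s(x) \le |x-y|^s$. The remaining cases are symmetric or trivial.

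For the pointwise bound, I split the defining PV integral at $\{|y-x|<1\}$ and $\{|y-x|\ge 1\}$. On the near region, the H\"older bounds for $\delta_0^s$ and $w$ give integrand $\le C[w]_{C^\alpha} |x-y|^{\alpha - n}$, so the contribution is $\le C [w]_{C^\alpha}$ uniformly in $x$. On the far region, I exploit that $\delta_0^s$ is supported in the bounded set $\overline\Omega$: outside some large ball centered at $x$ (of radius depending only on $\Omega$), either $\delta_0^s(y)=0$ or $y\in\Omega$ lies in a bounded set, and one uses the sharper bound $|\delta_0^s(x)-\delta_0^s(y)| \le C$ in the integrand to secure integrability together with the H\"older control of $w$.

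For the H\"older estimate, let $x_1, x_2 \in \R^n$ with $r = |x_1 - x_2| \le 1$, and split $I(x_1) - I(x_2)$ into contributions from $B = B_{3r}(x_1) \cup B_{3r}(x_2)$ and its complement. The near contribution is at most $O(r^\alpha) \le O(r^{\alpha/2})$, by the same pointwise argument applied to each integral restricted to $B$. On the complement I would use the telescoping identity
\[\frac{A_1 B_1}{K_1} - \frac{A_2 B_2}{K_2} = \frac{(A_1 - A_2) B_1}{K_1} + \frac{A_2(B_1 - B_2)}{K_1} + A_2 B_2 \left(\frac{1}{K_1} - \frac{1}{K_2}\right),\]
where $A_i = \delta_0^s(x_i) - \delta_0^s(y)$, $B_i = w(x_i) - w(y)$, and $K_i = |x_i - y|^{n+s}$, and estimate each summand via the $s$-H\"older control of $\delta_0^s$, the $\alpha$-H\"older control of $w$, and the mean value bound $\bigl| |x_1 - y|^{-n-s} - |x_2 - y|^{-n-s} \bigr| \le C r |x_1 - y|^{-n-s-1}$. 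The main obstacle is the far-region analysis, where one must combine the boundedness and compact support of $\delta_0^s$ with the possible growth of $w$ at infinity (only its seminorm is controlled) to ensure the integrals converge and, after optimizing against the near part, produce exactly the exponent $\alpha/2$.
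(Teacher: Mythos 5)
Your approach diverges structurally from the paper's and is, as you yourself flag, incomplete. The paper first changes variables $y = x + z$ \emph{before} taking the difference $I_{s/2}(\delta_0^s,w)(x_1) - I_{s/2}(\delta_0^s,w)(x_2)$, so that the kernel $|z|^{-n-s}$ is literally the same function of $z$ in both integrals; the difference then splits by the product rule $ab - cd = (a - c)b + c(b - d)$ into just two terms $J_1$, $J_2$, each an integral against the single kernel $|z|^{-n-s}$. There is no kernel-difference term at all, hence no need for the mean-value bound on $|x_1 - y|^{-n-s} - |x_2 - y|^{-n-s}$ nor for a near/far split at scale $r$. The exponent $\alpha/2$ then comes out immediately from the elementary interpolation $\min\{|z|^{\alpha}, r^{\alpha}\} \le r^{\alpha/2}|z|^{\alpha/2}$ applied to the second difference $|w(x_1) - w(x_1+z) - w(x_2) + w(x_2 + z)|$ (and likewise to the second difference of $\delta_0^s$ in $J_2$), combined with $\|\delta_0^s\|_{C^s(\R^n)} \le 1$ and $\mathrm{supp}\,\delta_0^s = \overline\Omega$. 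Your plan keeps the original variable and uses a three-term telescoping identity plus the mean-value bound on the kernel; this is the textbook singular-integral calculation and could in principle be carried through, but it is longer and forces exactly the far-field bookkeeping you leave open.

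The gap you acknowledge is genuine and is where the work lies. In your first telescoping term, $(A_1 - A_2)B_1/K_1$ with $A_1 - A_2 = \delta_0^s(x_1) - \delta_0^s(x_2)$ a fixed (generally nonzero) constant and $|B_1| = |w(x_1) - w(y)| \le [w]_{C^\alpha}|x_1 - y|^{\alpha}$ allowed to grow at infinity, the far integral behaves like $\int_{|y - x_1| > 3r}|x_1 - y|^{\alpha - n - s}\,dy$, which diverges whenever $\alpha \ge s$. The same issue appears in your $L^\infty$ far-field step. So the lemma, read literally with only $[w]_{C^\alpha(\R^n)}$ controlled and $\alpha$ arbitrary in $(0,1)$, actually requires either $\alpha < s$ or a boundedness hypothesis on $w$; the paper sidesteps this because it applies the lemma to a bounded extension $v$ and explicitly allows ``taking a smaller $\alpha$ if necessary.'' For a self-contained proof you must therefore either restrict to $\alpha < s$ or add $\|w\|_{L^\infty}$ to the right-hand side, and then actually execute the far-field estimates for all three telescoping terms. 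Your second term, $A_2(B_1-B_2)/K_1$ with $B_1-B_2 = w(x_1)-w(x_2)$, and your third term with the kernel difference, are routine once this is fixed, but as written the argument stops precisely at the nontrivial step.
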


\begin{proof} Let $x_1, x_2\in \R^n$. Then,
\[|I_{s/2} (\delta_0^s,w)(x_1)-I_{s/2}(\delta_0^s,w)(x_2)| \le c_{n,\frac s2}(J_1 + J_2),\]
where
\[J_1= \int_{\R^n}\frac{\bigl|w(x_1)-w(x_1+z)-w(x_2)+w(x_2+z)\bigr|
\bigl|\delta_0^s(x_1)-\delta_0^s(x_1+z)\bigr|}{|z|^{n+s}}\,dz  \]
and
\[J_2= \int_{\R^n}\frac{\bigl|w(x_2)-w(x_2+z)\bigr|
\bigl|\delta_0^s(x_1)-\delta_0^s(x_1+z)-\delta_0^s(x_2)+\delta_0^s(x_2+z)\bigr|}{|z|^{n+s}}\,dz \,.\]
Let $r=|x_1-x_2|$. Using that $\|\delta_0^s\|_{C^{s}(\R^n)}\le 1$ and ${\rm supp}\,\delta_0^s = \overline\Omega$,
\[
\begin{split}
J_1&\le \int_{\R^n}\frac{\bigl|w(x_1)-w(x_1+z)-w(x_2)+w(x_2+z)\bigr|\min\{|z|^s,({\rm diam}\, \Omega)^s\} }{|z|^{n+s}}\,dz \\
&\le  C\int_{\R^n} \frac{[w]_{C^\alpha(\R^n)}r^{\alpha/2}|z|^{\alpha/2}\min\{|z|^s,1\} }{|z|^{n+s}}\,dz \\
&\le  C r^{\alpha/2}[w]_{C^\alpha(\R^n)}\,.
\end{split}
\]
Analogously,
\[
J_2\le  C r^{\alpha/2}[w]_{C^\alpha(\R^n)}\,.
\]

The bound for $\|I_{s/2}(\delta_0^s,w)\|_{L^\infty(\R^n)}$ is obtained with a similar argument, and hence \eqref{eq:bound-I2} follows.
\end{proof}

Before stating the next result, we need to introduce the following weighted H\"older norms; see Definition 1.3 in \cite{RS}.

\begin{defi}\label{definorm} Let $\beta>0$ and $\sigma\ge -\beta$. Let $\beta=k+\beta'$, with $k$ integer and $\beta'\in (0,1]$.
For $w\in C^{\beta}(\Omega)=C^{k,\beta'}(\Omega)$, define the seminorm
\[ [w]_{\beta;\Omega}^{(\sigma)}= \sup_{x,y\in \Omega} \biggl(\min\{\delta(x),\delta(y)\}^{\beta+\sigma} \frac{|D^{k}w(x)-D^{k}w(y)|}{|x-y|^{\beta'}}\biggr).\]
For $\sigma>-1$, we also define  the norm $\|\cdot\|_{\beta;\Omega}^{(\sigma)}$ as follows: in case that $\sigma\ge0$,
\[ \|w\|_{\beta;\Omega}^{(\sigma)} = \sum_{l=0}^k \sup_{x\in \Omega} \biggl(\delta(x)^{l+\sigma} |D^l w(x)|\biggr) + [w]_{\beta;\Omega}^{(\sigma)}\,,\]
while for $-1<\sigma<0$,
\[\|w\|_{\beta;\Omega}^{(\sigma)} = \|w\|_{C^{-\sigma}(\overline \Omega)}+\sum_{l=1}^k \sup_{x\in \Omega} \biggl(\delta(x)^{l+\sigma} |D^l w(x)|\biggr) + [w]_{\beta;\Omega}^{(-\alpha)}.\]
\end{defi}

The following lemma, proved in \cite{RS}, will be used in the proof of Proposition \ref{proplaps2} below ---with $w$ replaced by $v$--- and also at the end of this section in the proof of Proposition \ref{thlaps/2} ---with $w$ replaced by $u$.

\begin{lem}[{\cite[Lemma 4.3]{RS}}]
\label{cosaiscalpha}
Let $\Omega$ be a bounded domain and $\alpha$ and $\beta$ be such that $\alpha\le s<\beta$ and $\beta-s$ is not an integer. Let $k$ be an integer such that $\beta= k+\beta'$ with $\beta'\in(0,1]$. Then,
\begin{equation}\label{eq:bound-lap-s/2}
[(-\Delta)^{s/2}w]_{\beta-s;\Omega}^{(s-\alpha)}\le C\bigl( \|w\|_{C^\alpha(\R^n)}+ \|w\|_{\beta;\Omega}^{(-\alpha)}\bigl)
\end{equation}
for all $w$ with finite right hand side.
The constant $C$ depends only on $n$, $s$, $\alpha$, and $\beta$.
\end{lem}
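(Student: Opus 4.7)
My plan is to reduce the weighted estimate to a standard interior H\"older bound for $(-\Delta)^{s/2}$ on a unit ball, via an interior rescaling centred at an arbitrary point $x_0\in\Omega$. Fix $x_0\in\Omega$, write $d=\delta(x_0)$, $r=d/2$, and let $M$ denote the right-hand side of \eqref{eq:bound-lap-s/2}. I would introduce
\[\tilde w(z)=w(x_0+rz)-w(x_0),\qquad z\in\R^n,\]
so that $(-\Delta)^{s/2}\tilde w(z)=r^s(-\Delta)^{s/2}w(x_0+rz)$, since constants are killed. Unpacking Definition \ref{definorm} with $\sigma=-\alpha$ and using that $\delta\ge r$ on $B_r(x_0)$, the bounds $|D^l w|\le M\delta^{\alpha-l}$ for $1\le l\le k$, $\|w\|_{C^\alpha(\overline\Omega)}\le M$, together with the weighted $[D^k w]_{C^{\beta'}}$ seminorm, rescale to
\[\|\tilde w\|_{C^\beta(B_1)}\le Cr^\alpha M,\qquad |\tilde w(z)|\le Mr^\alpha|z|^\alpha\ \text{ in }\ \R^n.\]

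Next I would apply a standard interior estimate
\[[(-\Delta)^{s/2}g]_{C^{\beta-s}(B_{1/2})}\le C\Bigl(\|g\|_{C^\beta(B_1)}+\int_{\R^n}\frac{|g(z)|}{(1+|z|)^{n+s}}\,dz\Bigr),\]
valid since $\beta-s\notin\mathbb Z$. Applied to $g=\tilde w$, both terms are at most $Cr^\alpha M$ (the tail converges because $\alpha<s$). Scaling back, and noting that $D^{k'}$ at $x_0+r\,\cdot\,$ carries a factor $r^{-(s+k')}$ while the $C^{\beta''}$ seminorm contributes an extra $r^{-\beta''}$, with $k'=\lfloor\beta-s\rfloor$ and $\beta''=\beta-s-k'$, I obtain simultaneously
\[[(-\Delta)^{s/2}w]_{C^{\beta-s}(B_{r/2}(x_0))}\le Cd^{\alpha-\beta}M,\qquad |D^{k'}(-\Delta)^{s/2}w(x_0)|\le Cd^{\alpha-s-k'}M,\]
the second by applying the $L^\infty$ version of the same interior bound at $z=0$.

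To assemble the weighted seminorm, I would take $x,y\in\Omega$ with $\min\{\delta(x),\delta(y)\}=d$ and split on the size of $|x-y|$. If $|x-y|\le d/4$ both points lie in $B_{r/2}(x)$ and the local H\"older bound above applies directly; if $|x-y|\ge d/4$, the pointwise bounds at $x$ and at $y$, together with $|x-y|^{\beta''}\ge(d/4)^{\beta''}$ and the exponent $\alpha-s-k'\le 0$, produce once more a factor of order $d^{\alpha-\beta}M$. In either case, multiplying by the weight $\min\{\delta(x),\delta(y)\}^{\beta-\alpha}=d^{\beta-\alpha}$ yields \eqref{eq:bound-lap-s/2}.

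The main obstacle is the second step: the unit-ball interior estimate for $(-\Delta)^{s/2}$ applied to a function that grows like $|z|^\alpha$ at infinity, and in particular the borderline case $\alpha=s$ in which the displayed tail integral is only logarithmically divergent. I would handle this by sharpening $\tilde w$ before rescaling: subtract from $w$ a Taylor polynomial of sufficiently high degree at $x_0$, which is legitimate because $w\in C^\beta(B_{d/2}(x_0))$ with $\beta>s$, so that $\tilde w$ vanishes to higher order at the origin and the tail becomes absolutely integrable. Once this technical point is settled, the rest of the argument is routine bookkeeping of the scaling exponents.
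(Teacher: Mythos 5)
The paper does not prove this lemma itself but cites it as Lemma~4.3 of \cite{RS}, so there is no in-paper proof to compare against; I assess your argument on its own merits. Your strategy --- rescale to a unit ball centred at $x_0\in\Omega$, apply an interior H\"older estimate for $(-\Delta)^{s/2}$, scale back, and glue via a case split on $|x-y|$ versus $d=\delta(x_0)$ --- is the natural one, and the scaling bookkeeping you spell out for the unit-ball norms and for the close-range case $|x-y|\le d/4$ is correct.

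The gap you flag is, however, real, and the repair you propose does not close it. The endpoint $\alpha=s$ is genuinely needed (the paper invokes the lemma with $\alpha=s$ at the end of Section~3), and there the tail $\int_{\R^n}|\tilde w(z)|(1+|z|)^{-n-s}\,dz$ diverges because $|\tilde w(z)|\le Mr^s|z|^s$ is the only control available for large $|z|$; the divergence lives at \emph{infinity}, not at the origin. Subtracting a Taylor polynomial of $w$ at $x_0$ improves $\tilde w$ near $z=0$, where the integrand is already bounded, while at infinity a polynomial of degree $\ge1$ forces $|\tilde w(z)|$ to grow polynomially --- strictly worse than $|z|^s$, since $w$ is only $C^\alpha$ globally and cannot be approximated by a polynomial far from $x_0$. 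The fix that actually works is a cutoff $\eta\in C_c^\infty(B_{3/4})$ with $\eta\equiv1$ on $B_{1/2}$: apply the interior estimate to the compactly supported $\tilde w\eta$ (whose tail vanishes), and for $\tilde w(1-\eta)$, which vanishes on $B_{1/2}$, estimate H\"older \emph{differences} of $(-\Delta)^{s/2}(\tilde w(1-\eta))$ on $B_{1/4}$ directly from the singular-integral formula: the kernel difference $|z_1-y|^{-n-s}-|z_2-y|^{-n-s}$ decays like $|y|^{-n-s-1}$, making the tail absolutely convergent even at $\alpha=s$. This also shows your far-range step $|x-y|\ge d/4$ is problematic as written: it invokes pointwise bounds $|(-\Delta)^{s/2}w(x)|\le C\delta(x)^{\alpha-s}M$, which are simply false at $\alpha=s$ --- indeed Proposition~\ref{thlaps/2} exhibits a logarithmic blow-up of $(-\Delta)^{s/2}u$ at $\partial\Omega$. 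That case must instead be handled by chaining the local seminorm estimate along a dyadic sequence of balls joining $x$ to $y$, which is consistent with the statement asserting only a seminorm bound and no zeroth-order term.
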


Before proving Proposition \ref{proplaps2}, we give an extension lemma ---see \cite[Theorem~1, Section 3.1]{EG} where the case $\alpha=1$ is proven in full detail.

\begin{lem} \label{ext}
Let $\alpha\in(0,1]$ and $V\subset\R^n$ a bounded domain. There exists a (nonlinear) map $E:C^{0,\alpha}(\overline V)\rightarrow C^{0,\alpha}(\R^n)$ satisfying
\[ E(w)\equiv w \quad \mbox{in }\overline V,\ \ \ [E(w)]_{C^{0,\alpha}(\R^n)}\le [w]_{C^{0,\alpha}(\overline V)},\ \ \ \mbox{and}\ \ \ \|E(w)\|_{L^\infty(\R^n)}\le \|w\|_{L^\infty(V)}\]
for all $w\in C^{0,\alpha}(\overline V)$.
\end{lem}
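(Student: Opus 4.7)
The plan is to use the classical McShane-type extension followed by a truncation. Define the constants
\[ L := [w]_{C^{0,\alpha}(\overline V)}\qquad \text{and}\qquad M := \|w\|_{L^\infty(V)}, \]
and set
\[ \widetilde E(w)(x) := \inf_{y\in \overline V}\bigl\{ w(y) + L\,|x-y|^\alpha \bigr\},\qquad x\in \R^n. \]
Since $\overline V$ is a nonempty compact set and $w$ is continuous, this infimum is finite and attained at every $x$.

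The verification proceeds in four steps. First, I would show that $\widetilde E(w)\equiv w$ on $\overline V$: picking $y=x$ gives $\widetilde E(w)(x)\le w(x)$, while for any $y\in\overline V$ the Hölder bound $w(x)\le w(y)+L|x-y|^\alpha$ gives the reverse inequality. Second, I would prove the Hölder estimate $[\widetilde E(w)]_{C^{0,\alpha}(\R^n)}\le L$ by the standard \emph{inf of translates} argument: given $x_1,x_2\in \R^n$ and $\varepsilon>0$, choose $y_\varepsilon\in\overline V$ with $w(y_\varepsilon)+L|x_2-y_\varepsilon|^\alpha\le \widetilde E(w)(x_2)+\varepsilon$; then
\[ \widetilde E(w)(x_1)-\widetilde E(w)(x_2) \le L\bigl(|x_1-y_\varepsilon|^\alpha - |x_2-y_\varepsilon|^\alpha\bigr)+\varepsilon \le L\,|x_1-x_2|^\alpha+\varepsilon, \]
where the last step uses the elementary subadditivity $|a+b|^\alpha\le |a|^\alpha+|b|^\alpha$ for $\alpha\in(0,1]$. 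Symmetry in $x_1,x_2$ and $\varepsilon\to 0$ give the claim.

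Third, to restore the $L^\infty$ bound (which $\widetilde E(w)$ need not satisfy, since $\widetilde E(w)(x)\to +\infty$ is not precluded only from above ---we do however have $\widetilde E(w)(x)\le w(y_0)+L|x-y_0|^\alpha$ for any fixed $y_0$, so $\widetilde E(w)$ grows at most like $|x|^\alpha$), I would truncate: define
\[ E(w)(x):=\max\bigl\{-M,\,\min\{M,\,\widetilde E(w)(x)\}\bigr\}. \]
Since truncation by constants from above and below is a $1$-Lipschitz operation on $\R$, it does not increase the Hölder seminorm, so $[E(w)]_{C^{0,\alpha}(\R^n)}\le [\widetilde E(w)]_{C^{0,\alpha}(\R^n)}\le L$. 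The truncation gives $\|E(w)\|_{L^\infty(\R^n)}\le M$ by construction, and on $\overline V$ we have $\widetilde E(w)=w$ with $|w|\le M$, so the truncation is inactive there and $E(w)\equiv w$ in $\overline V$.

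No step here is really deep; the only point that deserves care is the Hölder estimate in Step 2, where the subadditivity of $t\mapsto t^\alpha$ (valid precisely because $\alpha\le 1$) is what makes the infimum construction work and is the key reason the argument fails for $\alpha>1$. The nonlinearity of $E$ is harmless since the lemma does not ask for a linear extension operator.
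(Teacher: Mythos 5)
Your proof is essentially the same as the paper's: both use the McShane extension $\inf_{y\in\overline V}\{w(y)+L|x-y|^\alpha\}$ followed by a truncation to restore the $L^\infty$ bound. The only difference is that you truncate from both above and below, whereas the paper truncates only from above by $M$; the lower truncation is in fact redundant, since $w(y)+L|x-y|^\alpha\ge w(y)\ge -M$ for every $y\in\overline V$, so the McShane infimum already satisfies $\widetilde E(w)\ge -M$ everywhere.
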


\begin{proof}
It is immediate to check that
\[E(w)(x)=\min\left\{\min_{z\in \overline V}\left\{w(z)+
[w]_{C^{\alpha}(\overline V)}|z-x|^\alpha\right\},\|w\|_{L^\infty(V)}\right\}\]
satisfies the conditions since, for all $x,y,z$ in $\R^n$,
\[|z-x|^\alpha \le |z-y|^\alpha+|y-x|^\alpha\,.\]
\end{proof}

Now we can give the

\begin{proof}[Proof of Proposition \ref{proplaps2}]
Since $u/\delta^s|_\Omega$ is $C^{\alpha}(\overline\Omega)$ ---by hypothesis (b)--- then by Lemma \ref{ext}
there exists a $C^\alpha(\R^n)$ extension $v$ of $u/\delta^s|_\Omega$.

Then, we have that
\[(-\Delta)^{s/2}u(x)=v(x)(-\Delta)^{s/2}\delta_0^s(x)+\delta_0(x)^s(-\Delta)^{s/2}v(x)-I_{s/2}(v,\delta_0^s),\]
where
\[I_{s/2}(v,\delta_0^s)= c_{n,\frac s2}\int_{\R^n} \frac{\bigl(v(x)-v(y)\bigr)\bigl(\delta_0^s(x)-\delta_0^s(y)\bigr)}{|x-y|^{n+s}}\,dy \,,\]
as defined in \eqref{Is}.
This equality is valid in all of $\R^n$ because $\delta_0^s\equiv0$ in $\R^n\backslash \Omega$ and $v\in C^{\alpha+s}$ in $\Omega$ ---by hypothesis (b).
Thus, we only have to see that $\delta_0^s(-\Delta)^{s/2}v$ and $I_{s/2}(v,\delta_0^s)$ are $C^{\alpha}(\R^n)$ functions.

For the first one we combine assumption (b) with $\beta = s+\alpha<1$ and Lemma \ref{cosaiscalpha}. We obtain
\begin{equation}\label{paloma}
\|(-\Delta)^{s/2}v\|_{\alpha;\Omega}^{(s-\alpha)} \le C,
\end{equation}
and this yields $\delta_0^s(-\Delta)^{s/2}v \in C^{\alpha}(\R^n)$.
Indeed, let $w=(-\Delta)^{s/2}v$. Then, for all $x,y\in \Omega$ such that $y\in B_R(x)$, with $R=\delta(x)/2$, we have
\[\frac{|\delta^s(x)w(x)-\delta^s(y)w(y)|}{|x-y|^\alpha}\leq \delta(x)^s\frac{|w(x)-w(y)|}{|x-y|^\alpha}+|w(x)|\frac{|\delta^s(x)-\delta^s(y)|}{|x-y|^\alpha}.\]
Now, since \[|\delta^s(x)-\delta^s(y)|\leq C R^{s-\alpha} |x-y|^\alpha \le C \min\{\delta(x),\delta(y)\}^{s-\alpha} |x-y|^\alpha,\]
using  \eqref{paloma} and recalling Definition \ref{definorm} we obtain
\[\frac{|\delta^s(x)w(x)-\delta^s(y)w(y)|}{|x-y|^\alpha} \le C \quad \mbox{whenever }y\in B_R(x)\,,\ R=\delta(x)/2.\]
This bound can be extended to all $x,y\in \Omega$, since the domain is regular, by using a dyadic chain of balls; see for instance the proof of Proposition 1.1 in \cite{RS}.

The second bound, that is,
\[\|I_{s/2}(v,\delta_0^s)\|_{C^{\alpha}(\R^n)}\leq C,\]
follows from assumption (b) and Lemma \ref{lem-boundI2} (taking a smaller $\alpha$ if necessary).
\end{proof}

To prove Proposition \ref{prop:Lap-s/2-delta-s} we need some preliminaries.

Fixed $\rho_0>0$, define $\phi\in C^{s}(\R)$ by
\begin{equation}\label{phi}
\phi(x)= x^s  \chi_{(0,\rho_0)}(x)+\rho_0^s \chi_{(\rho_0,+\infty)}(x).
\end{equation}
This function $\phi$ is a truncation of the $s$-harmonic function $x_+^s$.
We need to introduce $\phi$ because the growth at infinity of $x_+^s$ prevents us
from computing its $(-\Delta)^{s/2}$.

\begin{lem}\label{claim1} Let $\rho_0>0$, and let $\phi:\R\rightarrow\R$ be given by \eqref{phi}. Then, we have
\[(-\Delta)^{s/2} \phi (x) = c_1\{\log |x| + c_2\chi_{(0,\infty)}(x)\} + h(x)\]
for $x\in(-\rho_0/2,\rho_0/2)$, where $h\in C^{s}([-\rho_0/2,\rho_0/2])$. The constants $c_1$ and $c_2$ are given by
\[c_1=c_{1,\frac s2}\qquad{\rm and}\qquad c_2=\int_0^{\infty}\left\{\frac{1-z^s}{|1-z|^{1+s}}+\frac{1+z^s}{|1+z|^{1+s}}\right\}dz,\]
where $c_{n,s}$ is the constant appearing in the singular integral expression \eqref{laps} for $(-\Delta)^{s}$ in dimension $n$.
\end{lem}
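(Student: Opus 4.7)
The plan is to compute $(-\Delta)^{s/2}\phi(x)$ directly from the singular-integral representation \eqref{laps} in dimension one, exploiting the fact that $\phi$ coincides with $y_+^s$ on $(-\infty,\rho_0)$ and extracting the $\log|x|$ singularity and the jump at $0$ via a scaling argument. As a first reduction, fix a smooth cutoff $\eta:\R\to[0,1]$ with $\eta\equiv 1$ on $(-\infty,\rho_0/2]$ and $\eta\equiv 0$ on $[\rho_0,\infty)$, and decompose $\phi=g+r$, where $g(y)=y_+^s\eta(y)$ is bounded and supported in $[0,\rho_0]$, and $r=\phi-g$ is smooth and supported in $[\rho_0/2,\infty)$. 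Since $r$ vanishes on a neighborhood of $(-\rho_0/2,\rho_0/2)$,
\[(-\Delta)^{s/2}r(x)=-c_{1,s/2}\int_{\R}\frac{r(y)}{|x-y|^{1+s}}\,dy\]
is $C^\infty$ in $x$ on that interval (differentiate under the integral sign; the integral converges since $r$ is bounded and decays fast enough relative to the kernel), so it may be absorbed into the remainder $h$. It then suffices to analyze $(-\Delta)^{s/2}g$.

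For $x\in(0,\rho_0/2)$, $g(x)=x^s$; the substitution $y=xt$ produces
\[(-\Delta)^{s/2}g(x)=c_{1,s/2}\,\textrm{PV}\!\int_{\R}\frac{1-t_+^s\,\eta(xt)}{|1-t|^{1+s}}\,dt,\]
so that $x$ enters only through the cutoff. Splitting at $t=R_x:=\rho_0/(2x)$ (on which $\eta(xt)=1$) one obtains
\[\int_{-\infty}^{0}\!\frac{dt}{(1-t)^{1+s}}+\textrm{PV}\!\int_0^{R_x}\!\frac{1-t^s}{|1-t|^{1+s}}\,dt+\Phi(x),\]
where $\Phi(x)$ collects the contributions from the transition region $R_x<t<\rho_0/x$ and the tail $t>\rho_0/x$; after a further rescaling $t=R_x s$ one sees that $\Phi$ is bounded and $C^s$ in $x$ on $[0,\rho_0/2)$. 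Since the middle integrand behaves like $-1/t$ at infinity, one has the asymptotic
\[\textrm{PV}\!\int_0^{R}\!\frac{1-t^s}{|1-t|^{1+s}}\,dt=-\log R+C_++o(1)\qquad\textrm{as }R\to\infty,\]
which, upon plugging $R=R_x$, produces the term $c_{1,s/2}\log x$ plus a $C^s$ remainder.

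For $x\in(-\rho_0/2,0)$, $g(x)=0$; writing $a=-x$ and $y=at$,
\[(-\Delta)^{s/2}g(x)=-c_{1,s/2}\!\int_0^{\rho_0/(2a)}\!\frac{t^s}{(1+t)^{1+s}}\,dt+\Psi(a),\]
with $\Psi$ bounded and $C^s$ by the same argument. The asymptotic $\int_0^{R}t^s/(1+t)^{1+s}\,dt=\log R+C_-+o(1)$ then yields $c_{1,s/2}\log|x|$ plus a $C^s$ remainder. Comparing the two cases, the jump at $x=0$ equals $c_{1,s/2}(C_++C_-)$; crucially, the logarithmic divergences in the individual $C_\pm$ exactly cancel when the integrals defining them are summed, and a direct rearrangement identifies
\[C_++C_-=\int_0^\infty\!\left\{\frac{1-z^s}{|1-z|^{1+s}}+\frac{1+z^s}{|1+z|^{1+s}}\right\}dz=c_2.\]

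The main technical obstacle is not the extraction of the leading logarithmic singularity---this is essentially dimensional analysis via the scaling $y=xt$---but rather the upgrade of the remainder from $o(1)$ to genuinely $C^s$ up to $x=0$. This is handled by the rescaling $t=R_x s$ on the transition region: after this substitution the cutoff $\eta$ is evaluated at $\rho_0 s/2$ independently of $x$, and an $x^s$ factor is extracted from the Jacobian times $|1-R_x s|^{-1-s}$, yielding an $O(|x|^s)$ error that is indeed $C^s$ in $x$. This regularity together with the explicit identification of the jump constant then completes the proof.
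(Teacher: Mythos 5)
Your overall strategy is the same as the paper's: scale $y$ by $|x|$ so that the only residual $x$-dependence sits in an effective upper cutoff $\sim\rho_0/|x|$, read off the $\log|x|$ from the logarithmic divergence of the rescaled integral, and identify the jump constant by matching the two one-sided constants. The preliminary decomposition $\phi=g+r$ with a smooth cutoff is a minor variant of the paper's split $\int_{-\infty}^{\rho_0}+\int_{\rho_0}^{\infty}$ (your far-field remainder is $C^\infty$ rather than $C^s$, which is a small improvement but immaterial), and your bookkeeping $C_++C_-=c_2$ is correct: after recombining the $\int_{-\infty}^0$ piece with the $x<0$ side via $t\mapsto-t$, the two divergent $\log R$ terms cancel and the limit is exactly the stated integral.

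There is, however, a genuine gap exactly where you flag the ``main technical obstacle.'' You establish $\Phi(x)=\mathrm{const}+O(|x|^s)$ and then assert this is ``indeed $C^s$ in $x$,'' but a pointwise decay estimate $|\Phi(x)-\Phi(0)|\le C|x|^s$ does not imply $\Phi\in C^s$ near $0$: for instance $x\mapsto|x|^s\sin(1/x)$ is continuous and $O(|x|^s)$ at the origin yet has unbounded $C^s$ seminorm there. Upgrading the $o(1)$/$O(|x|^s)$ remainder to honest H\"older regularity up to $x=0$ is precisely what the lemma requires, and it is the content of the paper's argument: the paper differentiates the rescaled integral $J_i$ in $x$, shows $|(J_i(x)-\log|x|)'|\le C|x|^{s-1}$ on $(0,\rho_0/2]$ (via the explicit limits of $x^{1-s}(J_i'(x)-1/x)$), and then integrates this gradient bound to conclude $J_i-\log|\cdot|\in C^s$ up to the endpoint. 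Your rescaled representation of $\Phi$ over $u\in(1,2)$ would support an analogous derivative-in-$x$ estimate of the form $|\Phi'(x)|\le C|x|^{s-1}$, but this must actually be carried out; as written, the step from ``$O(|x|^s)$'' to ``$C^s$'' is asserted and the conclusion $h\in C^s([-\rho_0/2,\rho_0/2])$ is not yet proved.
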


\begin{proof}
If $x<\rho_0$,
\[(-\Delta)^{s/2}\phi(x) =
 c_{1,\frac s2}\biggl(\int_{-\infty}^{\rho_0} \frac{x_+^s-y_+^s}{|x-y|^{1+s}}\,dy
 +  \int_{\rho_0}^{\infty}\frac{x_+^s-\rho_0^s}{|x-y|^{1+s}}\,dy\biggr).\]
We need to study the first integral:
\begin{equation}\label{integrals}
J(x)=\int_{-\infty}^{\rho_0}\frac{x_+^s-y_+^s}{|x-y|^{1+s}}\,dy =
\begin{cases}\displaystyle
\ J_1(x)= \int_{-\infty}^{\rho_0/x}\frac{1-z_+^s}{|1-z|^{1+s}}\,dz &\mbox{if }x>0\\
&\\
\displaystyle
\ J_2(x)=\int_{-\infty}^{\rho_0/|x|}\frac{-z_+^s}{|1+z|^{1+s}} \,dz&     \mbox{if }x<0\,,
\end{cases}
\end{equation}
since
\begin{equation}\label{dosestrelles}
(-\Delta)^{s/2}\phi(x)-c_1J(x)=
c_1\int_{\rho_0}^{\infty}\frac{x_+^s-\rho_0^s}{|x-y|^{1+s}}\,dy\end{equation}
belongs to $C^{s}([-\rho_0/2,\rho_0/2])$ as a function of $x$.

Using L'H\^opital's rule we find that
\[\lim_{x\downarrow0} \frac{J_1(x)} {\log |x|}=\lim_{x\uparrow0}
\frac{J_2(x)}{\log |x|}=1.\]
Moreover,
\[\begin{split}
\lim_{x\downarrow0} x^{1-s}\biggl(J_1'(x)-\frac{1}{x}\biggr)&=
\lim_{x\downarrow0} x^{1-s}\biggl(-\frac{\rho_0}{x^2}\frac{1-(\rho_0/x)^s}{((\rho_0/x)-1)^{1+s}}-\frac{1}{x}\biggr)\\
&=\rho_0^{-s}\lim_{y\downarrow0} y^{1-s} \biggl(\frac{1-y^s}{y(1-y)^{1+s}}-\frac{(1-y)^{1+s}}{y(1-y)^{1+s}}\biggr)\\
&=\rho_0^{-s}\lim_{y\downarrow0} \frac{1-y^s-(1-y)^{1+s}}{y^s}= -\rho_0^{-s}
\end{split}
\]
and
\[\begin{split}
\lim_{x\uparrow0} (-x)^{1-s}\biggl(J_2'(x)-\frac{1}{x}\biggr)&=
\lim_{x\uparrow0} (-x)^{1-s}\biggl(\frac{\rho_0}{x^2}\frac{-(-\rho_0/x)^s}
{(1+(-\rho_0/x))^{1+s}}-\frac{1}{-x}\biggr)\\
&=\rho_0^{-s}\lim_{y\downarrow0} y^{1-s} \biggl(\frac{-1}{y(1+y)^{1+s}}+\frac{(1+y)^{1+s}}{y(1+y)^{1+s}}\biggr)\\
&=\rho_0^{-s}\lim_{y\downarrow0} \frac{(1+y)^{1+s}-1}{y^s}= 0 \,.
\end{split}
\]

Therefore,
\[(J_1(x)-\log|x|)' \le C |x|^{s-1}\quad \mbox{in }(0,\rho_0/2]\]
and
\[(J_2(x)-\log|x|)' \le C |x|^{s-1}\quad \mbox{in }[-\rho_0/2,0),\]
and these gradient bounds yield
\[(J_1-\log|\cdot|)\in C^{s}([0,\rho_0/2])\quad \mbox{and}\quad
(J_2-\log|\cdot|)\in C^{s}([-\rho_0/2,0]).\]
However, these two H\"older functions do not have the same value at $0$.
Indeed,
\[
\begin{split}
\lim_{x\downarrow 0} \bigl\{(J_1(x)-\log|x|)-(&J_2(-x)-\log|-x|)\bigr\}
=\lim_{x\downarrow 0}\left\{J_1(x)-J_2(-x)\right\}\\
&=\int_{-\infty}^{\infty}\left\{\frac{1-z_+^s}{|1-z|^{1+s}}+\frac{z_+^s}{|1+z|^{1+s}}\right\}dz\\
&=\int_{0}^{\infty}\left\{\frac{1-z^s}{|1-z|^{1+s}}+\frac{1+z^s}{|1+z|^{1+s}}\right\}dz=c_2.
\end{split}
\]
Hence, the function $J(x)-\log|x|-c_2\chi_{(0,\infty)}(x)$, where $J$ is defined by \eqref{integrals}, is $C^s([-\rho_0/2,\rho_0/2])$.
Recalling \eqref{dosestrelles}, we obtain the result.
\end{proof}

Next lemma will be used to prove Proposition \ref{prop:Lap-s/2-delta-s}. Before stating it, we need the following

\begin{rem}\label{remrho0}
From now on in this section, $\rho_0>0$ is a small constant depending only on $\Omega$, which we assume to be a bounded $C^{1,1}$ domain.
Namely, we assume that that every point on $\partial\Omega$ can be touched from both inside and outside $\Omega$ by balls of radius $\rho_0$.
In other words, given $x_0\in \partial\Omega$, there are balls of radius $\rho_0$, $B_{\rho_0}(x_1)\subset \Omega$ and $B_{\rho_0}(x_2)\subset\R^n\setminus \Omega$, such that $\overline{B_{\rho_0}(x_1)}\cap\overline{B_{\rho_0}(x_2)}=\{x_0\}$.
A useful observation is that all points $y$ in the segment that joins $x_1$ and $x_2$ ---through $x_0$--- satisfy
$\delta(y)= |y-x_0|$.
\end{rem}

\begin{lem}\label{claim2} Let $\Omega$ be a bounded $C^{1,1}$ domain, $\delta(x)={\rm dist}(x,\partial\Omega)$, $\delta_0=\delta\chi_\Omega$, and $\rho_0$ be given by Remark \ref{remrho0}. Fix $x_0\in\partial\Omega$, and define
\[\phi_{x_0}(x)=\phi\left(-\nu(x_0)\cdot(x-x_0)\right)\]
and
\begin{equation}\label{Sx0}
S_{x_0}=\{x_0+t\nu(x_0),\ t\in(-\rho_0/2,\rho_0/2)\},
\end{equation}
where $\phi$ is given by \eqref{phi} and $\nu(x_0)$ is the unit outward normal to $\partial\Omega$ at $x_0$.
Define also $w_{x_0}=\delta_0^s-\phi_{x_0}$.

Then, for all $x\in S_{x_0}$,
\[|(-\Delta)^{s/2} w_{x_0}(x)-(-\Delta)^{s/2} w_{x_0}(x_0)| \le C|x-x_0|^{s/2},\]
where $C$ depends only on $\Omega$ and $\rho_0$ (and not on $x_0$).
\end{lem}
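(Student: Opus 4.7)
The plan is to first verify that $w_{x_0}$ vanishes identically on the segment $S_{x_0}$, then record a quantitative pointwise bound on $w_{x_0}$ near $x_0$, and finally estimate $F(x) - F(x_0) := (-\Delta)^{s/2}w_{x_0}(x) - (-\Delta)^{s/2}w_{x_0}(x_0)$ by a near/far-field splitting of the defining singular integral, centered at $x_0$.

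For the geometric step, I would use the interior and exterior ball conditions of radius $\rho_0$ at $x_0$ (Remark~\ref{remrho0}) to show that $\delta(x) = |t|$ when $x = x_0 + t\nu(x_0)$ with $t\in[-\rho_0/2,0]$ and $\delta_0(x) = 0$ when $t\in(0,\rho_0/2)$; in each case this matches the value $\phi(-t) = \phi_{x_0}(x)$, yielding $w_{x_0}\equiv 0$ on $S_{x_0}$ (and in particular $w_{x_0}(x_0)=0$). For the pointwise bound, the $C^{1,1}$ regularity of $\partial\Omega$ gives $|\delta_0(y) - (-\nu(x_0)\cdot(y-x_0))_+|\le C|y-x_0|^2$ locally near $x_0$, which combined with $|a_+^s - b_+^s|\le|a-b|^s$ yields
\[|w_{x_0}(y)|\le C\min\{|y-x_0|^{2s},1\}\quad \text{for all } y\in\R^n.\]

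Setting $r = |x-x_0|$ and using $w_{x_0}(x) = w_{x_0}(x_0) = 0$, I would then write
\[F(x)-F(x_0) = -c_{n,s/2}\int_{\R^n} w_{x_0}(y)\left[\frac{1}{|x-y|^{n+s}} - \frac{1}{|x_0-y|^{n+s}}\right]dy\]
as a difference of two principal-value integrals, and split $\R^n$ into the far region $\{|y-x_0|>8r\}$ and the near region $\{|y-x_0|\le 8r\}$. The far-field piece is controlled by the standard kernel difference bound $Cr|y-x_0|^{-n-s-1}$, which together with the pointwise bound on $w_{x_0}$ yields a contribution of size $Cr^s$. The near-field PV about $x_0$ is equally easy, since the integrand is dominated by $C|y-x_0|^{s-n}$, again giving $Cr^s$.

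The main obstacle is the PV about $x$ on the near-field ball, which is not symmetric about $x$. I would split this further into $B_{r/4}(x)$ and its complement within $\{|y-x_0|\le 8r\}$; on the complement, $|x-y|\ge r/4$ and a direct Lebesgue estimate using $|w_{x_0}(y)|\le Cr^{2s}$ gives $Cr^s$. On $B_{r/4}(x)$, the crucial cancellations come from the fact that $x_0$ is the unique nearest boundary point to $x\in S_{x_0}\setminus\{x_0\}$: this forces $\nabla\delta_0^s(x) = s|t|^{s-1}(-\nu(x_0)) = \nabla\phi_{x_0}(x)$, so $\nabla w_{x_0}(x) = 0$; moreover, the individually singular $O(r^{s-2})$ contributions to $D^2\delta_0^s$ and $D^2\phi_{x_0}$ in the normal direction cancel pointwise throughout $B_{r/4}(x)$ up to a curvature correction (controlled by the $C^{1,1}$ constant of $\partial\Omega$), leaving $\|D^2w_{x_0}\|_{L^\infty(B_{r/4}(x))}\le Cr^{s-1}$. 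A second-order Taylor expansion then gives $|w_{x_0}(y)|\le Cr^{s-1}|y-x|^2$ on $B_{r/4}(x)$, and the PV is bounded by $Cr^{s-1}\int_0^{r/4}\rho^{1-s}d\rho = Cr$. Combining all four contributions, $|F(x)-F(x_0)|\le Cr^s + Cr \le Cr^{s/2}$ for $r\le\rho_0/2<1$, which is the claim (in fact the estimate can be sharpened to $Cr^s$, but the weaker $Cr^{s/2}$ form is all that is needed for the subsequent analysis).
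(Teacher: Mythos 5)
Your proof is correct, and it takes a genuinely different route from the paper's. The paper writes
\[(-\Delta)^{s/2}w_{x_0}(x)-(-\Delta)^{s/2}w_{x_0}(x_0)\ \lesssim\ \int_{\R^n}\frac{|w_{x_0}(x+y)-w_{x_0}(x_0+y)|}{|y|^{n+s}}\,dy,\]
splits the integral at a free radius $r$, uses the bound $|w_{x_0}(z+y)|\le C|y|^{2s}$ (valid for any $z\in S_{x_0}$) on $B_r$, uses the crude global estimate $|w_{x_0}(x+y)-w_{x_0}(x_0+y)|\le C|x-x_0|^s$ (from $w_{x_0}\in C^s$) on $\R^n\setminus B_r$, and optimizes by taking $r=|x-x_0|^{1/2}$ to get exactly the stated $|x-x_0|^{s/2}$. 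Your argument instead freezes the kernel, splits $\R^n$ geometrically into a far region, an intermediate annulus, and a small ball $B_{r/4}(x)$, and extracts one extra order of cancellation near $x$ from the facts that $w_{x_0}(x)=0$, $\nabla w_{x_0}(x)=0$ and $\|D^2 w_{x_0}\|_{L^\infty(B_{r/4}(x))}\le Cr^{s-1}$ (itself a short computation from the $C^{1,1}$ bounds on $\delta_\pm$ and the pointwise agreement $\delta_\pm=d$, $\nabla\delta_\pm=-\nu(x_0)$ along $S_{x_0}$). This buys a sharper rate $|x-x_0|^s$, at the cost of a longer four-region decomposition and of having to justify the Hessian cancellation; the paper's argument avoids second derivatives entirely and is content with $|x-x_0|^{s/2}$, which is all that is needed downstream. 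One small point worth making explicit in your write-up: the Hessian estimate is only needed when $x\in\Omega$, since when $x\notin\Omega$ one has $w_{x_0}\equiv 0$ on $B_{r/4}(x)$ and the innermost contribution vanishes trivially.
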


\begin{proof} We denote $w=w_{x_0}$.
Note that, along $S_{x_0}$, the distance to $\partial\Omega$ agrees with the distance to the tangent plane to $\partial\Omega$ at $x_0$; see Remark \ref{remrho0}.
That is, denoting $\delta_\pm=(\chi_\Omega-\chi_{\R^n\backslash \Omega})\delta$ and $d(x)=-\nu(x_0)\cdot(x-x_0)$, we have $\delta_\pm(x)=d(x)$ for all $x\in S_{x_0}$.
Moreover, the gradients of these two functions also coincide on $S_{x_0}$, i.e., $\nabla\delta_\pm(x)=-\nu(x_0)=\nabla d(x)$ for all $x\in S_{x_0}$.

Therefore, for all $x\in S_{x_0}$ and $y\in B_{\rho_0/2}(0)$, we have
\[|\delta_{\pm}(x+y)-d(x+y)|\le C|y|^2\]
for some $C$ depending only on $\rho_0$. Thus, for all $x\in S_{x_0}$ and $y\in B_{\rho_0/2}(0)$,
\begin{equation}\label{w(x+y)}|w(x+y)|= |(\delta_{\pm}(x+y))_+^s-(d(x+y))_+^s| \le C|y|^{2s},\end{equation}
where $C$ is a constant depending on $\Omega$ and $s$.

On the other hand, since $w\in C^{s}(\R^n)$, then
\begin{equation}\label{w(x+y)2}|w(x+y)-w(x_0+y)|\leq C|x-x_0|^s.\end{equation}

Finally, let $r<\rho_0/2$ to be chosen later. For each $x\in S_{x_0}$ we have
\[
\begin{split}
|(-\Delta)^{s/2} &w(x)-(-\Delta)^{s/2} w(x_0)|
\le C\int_{\R^n} \frac{|w(x+y)-w(x_0+y)|}{|y|^{n+s}}\,dy\\
&\le C\int_{B_r}\frac{|w(x+y)-w(x_0+y)|}{|y|^{n+s}}\,dy
+C\int_{\R^n\setminus B_r}\frac{|w(x+y)-w(x_0+y)|}{|y|^{n+s}}\,dy\\
&\le C\int_{B_r}\frac{|y|^{2s}}{|y|^{n+s}}\,dy+
C\int_{\R^n\setminus B_r}\frac{|x-x_0|^s}{|y|^{n+s}}\,dy\\
&= C(r^s +|x-x_0|^sr^{-s})\,,
\end{split}
\]
where we have used \eqref{w(x+y)} and \eqref{w(x+y)2}.
Taking $r=|x-x_0|^{1/2}$ the lemma is proved.
\end{proof}

The following is the last ingredient needed to prove Proposition \ref{prop:Lap-s/2-delta-s}.

\begin{claim}\label{claimMMM}
Let $\Omega$ be a bounded $C^{1,1}$ domain, and $\rho_0$ be given by Remark \ref{remrho0}.
Let $w$ be a function satisfying, for some $K>0$,
\begin{itemize}
\item[(i)] $w$ is locally Lipschitz in $\{x\in\R^n \,:\,0<\delta(x)<\rho_0\}$ and
\[|\nabla w(x)| \le K\delta(x)^{-M}\ \mbox{ in }\ \{x\in\R^n \,:\,0<\delta(x)<\rho_0\}\]
for some $M >0$.
\item[(ii)] There exists $\alpha>0$ such that
\[|w(x)-w(x^*)|\le K\delta(x)^\alpha \ \mbox{ in }\ \{x\in\R^n \,:\,0<\delta(x)<\rho_0\},\]
where $x^*$ is the unique point on $\partial \Omega$ satisfying $\delta(x)= |x-x^*|$.
\item[(iii)] For the same $\alpha$, it holds
\[
\|w\|_{C^{\alpha}\left( {\{\delta\geq\rho_0\}}\right)}\le K.
\]
\end{itemize}

Then, there exists $\gamma>0$, depending only on $\alpha$ and $M$, such that
\begin{equation}\label{claimMMMeq}
\|w\|_{C^{\gamma}(\R^n)}\le C K,
\end{equation}
where $C$ depends only on $\Omega$.
\end{claim}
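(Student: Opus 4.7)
The goal is to produce a uniform H\"older modulus for $w$ by connecting two arbitrary points $x,y\in\R^n$ with a short path made of a few pieces, each controlled by one of the three hypotheses: (i) on a segment staying in $\{0<\delta<\rho_0\}$ at a controlled depth, (ii) along the normal direction between a point and its boundary projection, and (iii) on any segment in $\{\delta\ge\rho_0\}$. The crucial difficulty is that the gradient bound (i) is \emph{not} integrable up to $\partial\Omega$ when $M\ge 1$, so two points very close to $\partial\Omega$ cannot simply be joined by a short segment at their depth; they must first be lifted to a deeper controlled level inside $\Omega$ (or its complement), paying the boundary cost via (ii) only at that level.

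As a preliminary I would deduce from (ii) and (iii) that $\|w\|_{L^\infty(\R^n)}\le CK$ and that $w$ extends continuously to $\partial\Omega$. For each boundary point $z\in\partial\Omega$ the point $z-\rho_0\nu(z)$ lies at depth $\rho_0$ by Remark~\ref{remrho0}, so (iii) controls $w$ there, and (ii) then relays the control to $w(z)$ and to any $w(x)$ with $x^*=z$; the same (ii)-estimate forces the limit $\lim_{x\to z}w(x)$ to exist and to agree from both sides at each $z\in\partial\Omega$.

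For the central estimate, set $r:=|x-y|$, assume $r\ll\rho_0$ (otherwise use the $L^\infty$ bound), and choose an auxiliary scale $\tau:=r^\beta$ with $\beta\in(0,1)$ small enough that $M\beta<1$ (e.g.\ $\beta=1/(M+1)$). If $\min(\delta(x),\delta(y))\ge\tau$, the uniform ball property of $\Omega$ forces the segment $[x,y]$ to remain at depth $\ge\tau/2$, and integrating (i) along it (splitting at $\{\delta=\rho_0\}$ and using (iii) wherever $\delta\ge\rho_0$) gives $|w(x)-w(y)|\le CKr\,\tau^{-M}=CKr^{1-M\beta}$. Otherwise, say $\delta(x)<\tau$: pick a lifting depth $\tau'\in[\tau,\rho_0/2]$ to be optimized and define $x^\uparrow:=x^*-\tau'\nu(x^*)$ or $x^*+\tau'\nu(x^*)$ so that $x^\uparrow$ lies on the same side of $\partial\Omega$ as $x$; define $y^\uparrow$ analogously, or set $y^\uparrow:=y$ when $\delta(y)\ge\tau'$. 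Two applications of (ii) yield
\[|w(x)-w(x^\uparrow)|\le K\bigl(\delta(x)^\alpha+{\tau'}^\alpha\bigr)\le 2K{\tau'}^\alpha,\]
and similarly for $y$. The lifted points lie at depth $\ge\tau'$ and satisfy $|x^\uparrow-y^\uparrow|\le C(r+\tau')\le C\tau'$, so the first case applied at scale $\tau'$ supplies a bound for $|w(x^\uparrow)-w(y^\uparrow)|$. Balancing the two competing terms ${\tau'}^\alpha$ and $r{\tau'}^{-M}$ ---e.g.\ by taking $\tau'=\max(\tau,r^{1/(M+\alpha)})$--- turns the total into $CKr^\gamma$ for some $\gamma>0$ depending only on $\alpha$ and $M$.

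The most delicate step I anticipate is the geometric claim that the segment $[x^\uparrow,y^\uparrow]$ actually stays at depth $\ge\tau'/2$, which is what allows (i) to be applied cleanly there. It rests on the $C^{1,1}$ regularity and the uniform interior/exterior ball property of Remark~\ref{remrho0}: $\partial\Omega$ deviates from any tangent hyperplane by at most $Cd^2/\rho_0$ over horizontal distance $d$, so the segment (of length $\le C\tau'$) stays at depth $\tau'-O({\tau'}^2/\rho_0)\ge\tau'/2$ as soon as $\tau'\le c\rho_0$. The residual case in which $x$ and $y$ lie on opposite sides of $\partial\Omega$ is handled by inserting the intersection of $[x,y]$ with $\partial\Omega$ and applying the bounds on each side separately, using the extension of $w$ to $\partial\Omega$ produced in the preliminary step.
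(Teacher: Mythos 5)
Your proposal is correct and reproduces the paper's mechanism exactly: lift $x$ and $y$ to a controlled depth $\sim r^\beta$ ($r=|x-y|$), pay the vertical cost via (ii), the horizontal cost via (i) on the lifted segment (whose depth is controlled by the uniform ball property of the $C^{1,1}$ boundary), and balance $\beta$ against $M$. Two points of comparison. First, you make explicit a case distinction $\min(\delta(x),\delta(y))\ge\tau$ versus $<\tau$, handling the ``deep'' case by integrating (i) directly along $[x,y]$; the paper's written estimate
\[
|w(x)-w(y)|\le K\delta(x)^\alpha+K\delta(y)^\alpha+2K(\rho_0 r^\beta)^\alpha+CKr^{1-\beta M}\le CKr^\gamma
\]
tacitly uses $\delta(x),\delta(y)\lesssim r^\beta$, which is not automatic, so your explicit split is a genuine clarification rather than mere pedantry. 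Second, the paper places the lifted point always on the exterior side, $x'=x^*+\rho_0 r^\beta\nu(x^*)$, whereas you lift $x^\uparrow$ to the same side of $\partial\Omega$ as $x$. The paper's convention is slightly cleaner: $x'$ and $y'$ are then on a common side, so $[x',y']$ never crosses $\partial\Omega$ and the ``opposite sides'' subcase you discuss at the end never arises. With your same-side convention that subcase does arise (if $x\in\Omega$ and $y\notin\overline\Omega$ then $x^\uparrow\in\Omega$, $y^\uparrow\notin\overline\Omega$, and (i) cannot be integrated across the boundary); your fix via the boundary trace works, but it is an extra step that the uniform outward lift eliminates for free. Apart from these details, the argument is the same and the exponent $\gamma=\min\{\alpha\beta,\,1-\beta M\}$ comes out as in the paper.
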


\begin{proof} First note that from (ii) and (iii) we deduce that $\|w\|_{L^\infty(\R^n)}\leq CK$.
Let $\rho_1\leq\rho_0$ be a small positive constant to be chosen later.
Let $x,y \in \{\delta\leq\rho_0\} $, and $r=|x-y|$.

If $r\geq\rho_1$, then
\[\frac{|w(x)-w(y)|}{|x-y|^\gamma}\leq \frac{2\|w\|_{L^\infty(\R^n)}}{\rho_1^\gamma}\leq CK.\]

If $r<\rho_1$, consider
\[x' =  x^* +  \rho_0 r^\beta \nu(x^*)\ \mbox{ and }\ y' =  y^* + \rho_0 r^\beta \nu(y^*),\]
where $\beta\in(0,1)$ is to be determined later. Choose $\rho_1$ small enough so that the segment joining $x'$ and $y'$ contained in the set $\{\delta>\rho_0 r^\beta/2 \}$.
Then, by (i),
\begin{equation}\label{1209}|w(x')-w(y')|\le CK  (\rho_0 r^{\beta}/2) ^{-M} |x'-y'| \le C r^{1-\beta M}.\end{equation}
Thus, using (ii) and \eqref{1209},
\[
\begin{split}
|w(x)-w(y)|&\le |w(x)-w(x^*)|+ |w(x^*)-w(x')|+
\\&\qquad +|w(y)-w(y^*)|+ |w(y^*)-w(y')|+ |w(x')-w(y')|\\
&\le K \delta(x)^\alpha + K\delta(y)^\alpha +2K (\rho_0 r^\beta)^\alpha + CK r^{1-\beta M}.
\end{split}
\]
Taking $\beta<1/M$ and $\gamma= \min\{\alpha\beta, 1-\beta M\}$, we find
\[|w(x)-w(y)|\le CKr^\gamma = CK|x-y|^\gamma.\]
This proves
\[[w]_{C^{\gamma}\left({\{\delta\leq\rho_0\}}\right)}\le CK.\]
To obtain the bound \eqref{claimMMMeq} we combine the previous seminorm estimate with (iii).
\end{proof}

Finally, we give the proof of Proposition \ref{prop:Lap-s/2-delta-s}.

\begin{proof}[Proof of Proposition \ref{prop:Lap-s/2-delta-s}]
Let
\[h(x)= (-\Delta)^{s/2}\delta_0^s(x)-c_1\left\{\log^- \delta(x)+c_2\chi_\Omega(x)\right\}.\]
We want to prove that $h\in C^{\alpha}(\R^n)$ by using Claim \ref{claimMMM}.

On one hand, by Lemma \ref{claim1}, for all $x_0\in\partial\Omega$ and for all $x\in S_{x_0}$, where $S_{x_0}$ is defined by \eqref{Sx0}, we have
\[h (x) = (-\Delta)^{s/2}\delta_0^s(x)- (-\Delta)^{s/2} \phi_{x_0}(x) + \tilde h\bigl(\nu(x_0)\cdot(x-x_0)\bigr),\]
where $\tilde h$ is the $C^{s}([-\rho_0/2,\rho_0/2])$ function from Lemma \ref{claim1}. Hence, using Lemma \ref{claim2}, we find
\[|h(x)-h(x_0)|\le C|x-x_0|^{s/2}\quad \mbox{for all }x\in S_{x_0}\]
for some constant independent of $x_0$.

Recall that for all $x\in S_{x_0}$ we have $x^*=x_0$, where $x^*$ is the unique point on $\partial \Omega$ satisfying $\delta(x)= |x-x^*|$. Hence,
\begin{equation}\label{eq:pf-prop-calpha-bound1}
|h(x)-h(x^*)|\le C|x-x^*|^{s/2}\quad \mbox{ for all }x\in\{\delta<\rho_0/2\}\,.
\end{equation}

Moreover,
\begin{equation}\label{eq:pf-prop-bound-faraway}
\|h\|_{C^{\alpha}(\{\delta\ge\rho_0/2\})} \le C
\end{equation}
for all $\alpha\in (0,1-s)$, where $C$ is a constant depending only on $\alpha$, $\Omega$ and $\rho_0$.
This last bound is found using that $\|\delta_0^s\|_{C^{0,1}(\{\delta\ge\rho_0/2\})} \le C$, which yields
\[\|(-\Delta)^{s/2}\delta_0^s\|_{C^{\alpha}(\{\delta\ge\rho_0\})} \le C\]
for $\alpha<1-s$.

On the other hand, we claim now that if $x\notin \partial\Omega$ and $\delta(x)<\rho_0/2$, then
\begin{equation}\label{eq:pf-prop-grad-bound}
|\nabla h(x)|\le |\nabla(-\Delta)^{s/2}\delta_0^s(x)| + c_1|\delta(x)|^{-1}\le C|\delta(x)|^{-n-s}.
\end{equation}
Indeed, observe that $\delta_0^s\equiv0$ in $\R^n\backslash\Omega$, $|\nabla\delta_0^s|\leq C\delta_0^{s-1}$ in $\Omega$, and $|D^2\delta_0^s|\leq C\delta_0^{s-2}$ in $\Omega_{\rho_0}$.
Then, $r= \delta(x)/2$,
\[\begin{split}
|(-\Delta)^{s/2}\nabla\delta_0^s(x)| &\le
C\int_{\R^n} \frac{|\nabla\delta_0^s(x) - \nabla\delta_0^s(x+y)|}{|y|^{n+s}}\,dy\\
&\le C\int_{B_r} \frac{C r^{s-2}|y|\,dy}{|y|^{n+s}}+ C\int_{\R^n\setminus B_r} \left(\frac{|\nabla\delta_0^s(x)|}{|y|^{n+s}}+ \frac{|\nabla\delta_0^s(x+y)|}{r^{n+s}}\right)dy\\
&\leq \frac Cr+\frac Cr+\frac{C}{r^{n+s}}\int_{\R^n} \delta_0^{s-1}\leq \frac{C}{r^{n+s}},
\end{split}
\]
as claimed.

To conclude the proof, we use bounds \eqref{eq:pf-prop-calpha-bound1}, \eqref{eq:pf-prop-bound-faraway}, and \eqref{eq:pf-prop-grad-bound} and Claim \ref{claimMMM}.
\end{proof}

To end this section, we give the

\begin{proof}[Proof of Proposition \ref{thlaps/2}]
The first part follows from Propositions \ref{proplaps2} and \ref{prop:Lap-s/2-delta-s}.
The second part follows from Lemma \ref{cosaiscalpha} with $\alpha=s$ and $\beta\in(s,1+2s)$.
\end{proof}

\section{The operator $-\left.\frac{d}{d\lambda}\right|_{\lambda=1^+}\int_{\mathbb R}w_\lambda w_{1/\lambda}$}
\label{sec7}

The aim of this section is to prove Proposition \ref{propoperador}.
In other words, we want to evaluate the operator
\begin{equation}\label{mathfrakI}
\mathfrak{I}(w)=-\left.\frac{d}{d\lambda}\right|_{\lambda=1^+}\int_{0}^{\infty} w\left(\lambda t\right)w\left(\frac{t}{\lambda}\right)dt
\end{equation}
on
\[w(t)=A\log^-|t-1|+B\chi_{[0,1]}(t)+h(t),\]
where $\log^-t=\min\{\log t,0\}$, $A$ and $B$ are real numbers, and $h$ is a function satisfying, for some constants $\alpha\in(0,1)$, $\gamma\in(0,1)$, and $C_0$, the following conditions:
\begin{itemize}
\item[(i)] $\|h\|_{C^{\alpha}((0,\infty))}\leq C_0$.
\item[(ii)] For all $\beta\in[\gamma,1+\gamma]$, \[\|h\|_{C^{\beta}((0,1-\rho)\cup(1+\rho,2))}\leq C_0 \rho^{-\beta}\qquad \textrm{for all}\ \ \rho\in(0,1).\]
\item[(iii)] $|h'(t)|\leq C t^{-2-\gamma}$ and $|h''(t)|\leq C t^{-3-\gamma}$ for all $t>2$.
\end{itemize}

We will split the proof of Proposition \ref{propoperador} into three parts.
The first part is the following, and evaluates the operator $\mathfrak I$ on the function
\begin{equation}\label{w0}w_0(t)=A\log^-|t-1|+B\chi_{[0,1]}(t).\end{equation}

\begin{lem}\label{c2} Let $w_0$ and $\mathfrak I$ be given by \eqref{w0} and \eqref{mathfrakI}, respectively.
Then,
\[\mathfrak I(w_0)=A^2\pi^2+B^2.\]
\end{lem}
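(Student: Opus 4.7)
The plan is to split $w_0 = A w_A + B w_B$ with $w_A(t)=\log^-|t-1|$ and $w_B(t)=\chi_{[0,1]}(t)$, expand $w_0(\lambda t)w_0(t/\lambda)$ into the four resulting terms, and compute the $AA$, $AB$, and $BB$ contributions separately. After the change of variable $s=\lambda t$ the integral becomes $I_\lambda = \lambda^{-1} J(\lambda^2)$ with $J(\mu):=\int_0^\infty w(s)\,w(s/\mu)\,ds$, so
\[ \mathfrak{I}(w) \;=\; J(1) - 2\,J'(1^+). \]
This reformulation is convenient because each of the pieces $J_{AA}, J_{AB}, J_{BB}$ will be an explicit elementary expression on $\mu$ slightly larger than $1$.

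The $BB$ and $AB$ pieces are straightforward. For $\mu\ge 1$ one has $\chi_{[0,1]}(s)\chi_{[0,1]}(s/\mu)=\chi_{[0,1]}(s)$, so $J_{BB}\equiv 1$ and the $BB$-contribution to $\mathfrak{I}$ is $B^2$. For the cross piece, direct antiderivatives of $\log$ give, for $\mu$ near $1$,
\[ J_{AB}(\mu) \;=\; -2 + (\mu-1)(\log\mu - 1), \]
so $J_{AB}(1) - 2J_{AB}'(1^+) = -2 - 2(-1)=0$ and the cross term contributes nothing.

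The heart of the argument is the $AA$ piece. The substitution $u=\lambda t-1$ on the support $[0,2/\lambda]$ of the integrand, combined with the fact that $|(u-\epsilon)/(1+\epsilon)|\le 1$ on $[-1,1]$ for $\epsilon:=\mu-1>0$ small (so that $\log^-$ agrees with $\log$ and splits nicely), yields
\[ J_{AA}(\mu) \;=\; F(\mu-1) + 2\log\mu, \qquad F(\epsilon):=\int_{-1}^{1}\log|x|\,\log|x-\epsilon|\,dx. \]
Using the polarization identity $2\log|x|\log|x-\epsilon| = (\log|x|)^2+(\log|x-\epsilon|)^2-(\log|x/(x-\epsilon)|)^2$, the first integral equals $4$, the second differs from $4$ by $O(\epsilon^3)$ (trivial translation), and the third, after the scaling $x=\epsilon y$, equals $\epsilon K + O(\epsilon^2)$, where
\[ K \;:=\; \int_{-\infty}^{\infty} \Bigl(\log\Bigl|\tfrac{y}{y-1}\Bigr|\Bigr)^{2} dy \;<\; \infty \]
(integrable log-squared singularities at $y=0,1$ and $\sim y^{-2}$ decay at infinity). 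Hence $F'(0^+) = -K/2$, and the $AA$-contribution to $\mathfrak{I}$ is $A^2\bigl[4 - 2(F'(0^+)+2)\bigr] = -2A^2 F'(0^+) = A^2 K$.

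The main obstacle is the identification $K=\pi^{2}$. Using the symmetry $y\mapsto 1-y$, $K = 2\int_1^\infty + \int_0^1$. For the tail, I would expand $\log|y/(y-1)|=\sum_{n\ge 1}(ny^n)^{-1}$ and integrate termwise to obtain the Euler-type sum $\sum_{n,m\ge 1}[nm(n+m-1)]^{-1}$, which evaluates to $\pi^2/3$ after isolating $n=1$ (giving $\zeta(2)$) and handling the remainder via the telescoping $\sum_{j\ge k}[j(j+1)]^{-1}=1/k$ (giving another $\zeta(2)$). For the unit-interval part, expand $(\log y - \log(1-y))^2$ and invoke the classical value $\int_0^1\log y\,\log(1-y)\,dy = 2 - \pi^{2}/6$ to get $\pi^2/3$ as well. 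Therefore $K = \pi^2$, and summing the three contributions yields $\mathfrak{I}(w_0) = A^2\pi^{2} + B^2$.
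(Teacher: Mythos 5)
Your proof is correct, and for the key computation it takes a genuinely different route from the paper's. Both arguments decompose $w_0=Aw_1+Bw_2$ with $w_1=\log^-|t-1|$, $w_2=\chi_{[0,1]}$, and both treat the $BB$ and cross terms by direct elementary integration. The real difference lies in evaluating $\mathfrak{I}(w_1)$. The paper exhibits an explicit antiderivative $\vartheta_\lambda(t)$ of $\log|\lambda t-1|\,\log|t/\lambda-1|$ (built from $\Psi(t)=\int_0^t\frac{\log|r-1|}{r}\,dr$), evaluates $I_\lambda-I_1$ exactly, and sends $\lambda\downarrow1$ to reduce to $\int_0^1\frac{\log t}{1-t^2}\,dt=-\pi^2/8$. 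You instead reformulate via $\mathfrak{I}(w)=J(1)-2J'(1^+)$ with $J(\mu)=\int_0^\infty w(s)w(s/\mu)\,ds$, convert $J_{AA}(\mu)$ to $F(\mu-1)+2\log\mu$ with $F(\epsilon)=\int_{-1}^1\log|x|\log|x-\epsilon|\,dx$, apply the polarization identity $2ab=a^2+b^2-(a-b)^2$, and use the scaling $x=\epsilon y$ to isolate the universal constant $K=\int_{-\infty}^\infty\bigl(\log\bigl|\tfrac{y}{y-1}\bigr|\bigr)^2dy=\pi^2$, which you then evaluate by series. I checked the details: $J_{AA}(\mu)=F(\mu-1)+2\log\mu$ is exact for $\mu\in[1,2)$, the three polarization integrals give $2F(\epsilon)=8-\epsilon K+O(\epsilon^2)$ (so $F'(0^+)=-K/2$), and the Euler-sum $\sum_{n,m\ge1}\frac{1}{nm(n+m-1)}=\pi^2/3$ together with $\int_0^1\log y\log(1-y)\,dy=2-\pi^2/6$ indeed yield $K=\pi^2$, recovering $\mathfrak{I}(w_0)=A^2\pi^2+B^2$. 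Your polarization-plus-scaling route avoids having to guess the antiderivative $\vartheta_\lambda$, which is the least transparent step in the paper's proof, at the cost of a separate series evaluation of $K$; both approaches ultimately reduce the constant to a $\zeta(2)$-type identity.
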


The second result towards Proposition \ref{propoperador} is the following.

\begin{lem}\label{lema1} Let $h$ be a function satisfying {\rm(i)}, {\rm(ii)}, and {\rm(iii)} above, and $\mathfrak I$ be given by \eqref{mathfrakI}.
Then,
\[\mathfrak I(h)=0.\]
Moreover, there exist constants $C$ and $\nu>1$, depending only on the constants $\alpha$, $\gamma$, and $C_0$ appearing in {\rm(i)-(ii)-(iii)}, such that
\[\left|\int_{0}^{\infty} \left\{h\left(\lambda t\right)h\left(\frac{t}{\lambda}\right)-h(t)^2\right\}dt\right|\leq C|\lambda-1|^\nu\]
for each $\lambda\in(1,3/2)$.
\end{lem}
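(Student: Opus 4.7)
The plan is to prove the quantitative estimate first; the equality $\mathfrak{I}(h)=0$ then follows at once, because dividing the estimate by $\lambda-1$ and sending $\lambda\downarrow 1$ yields an upper bound $C|\lambda-1|^{\nu-1}$ that tends to $0$ whenever $\nu>1$. Write $\tau=\lambda-1\in(0,1/2)$ and introduce a threshold $\epsilon=\tau^{\sigma}$ with $\sigma\in(1-\alpha,1)$ to be fixed at the end (in particular $\epsilon\ge 2\tau$ for $\tau$ small). Decompose the integration region as $(0,\infty)=N\cup F\cup T$ with
$N=(1-\epsilon,1+\epsilon)$, $F=(0,1-\epsilon)\cup(1+\epsilon,2)$, and $T=(2,\infty)$; these three pieces correspond respectively to using the $C^\alpha$ control from (i), the $C^{1,\gamma}$ control from (ii), and the decay control from (iii).

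On $N$ the integrand is bounded pointwise by $C\tau^\alpha$ using (i), giving $\bigl|\int_N\bigr|\le C\epsilon\,\tau^\alpha=C\tau^{\sigma+\alpha}$. On $F$, set $\rho=|t-1|\ge\epsilon$; since $\epsilon\ge 2\tau$, both $\lambda t$ and $t/\lambda$ stay at distance $\ge\rho/2$ from $1$, so (ii) supplies $|h'|\le C\rho^{-1}$ and $[h']_{C^\gamma}\le C\rho^{-(1+\gamma)}$ on the relevant intervals. Taylor expanding,
\[
h(\lambda t)=h(t)+t h'(t)\,\tau+R_+,\qquad h(t/\lambda)=h(t)+t h'(t)(1/\lambda-1)+R_-,
\]
with $|R_\pm|\le C\tau^{1+\gamma}\rho^{-(1+\gamma)}$. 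Using the elementary identities $\tau+(1/\lambda-1)=\tau^2/\lambda$ and $\tau(1/\lambda-1)=-\tau^2/\lambda$, one collects
\[
h(\lambda t)h(t/\lambda)-h(t)^2=\frac{\tau^2}{\lambda}\bigl[h(t)h'(t)t-h'(t)^2 t^2\bigr]+E(t),
\]
with $|E(t)|\le C\bigl[\tau^{1+\gamma}\rho^{-(1+\gamma)}+\tau^{2+\gamma}\rho^{-(2+\gamma)}+\tau^{2+2\gamma}\rho^{-2(1+\gamma)}\bigr]$. Since $|hh't-(h')^2 t^2|\le C(\rho^{-1}+\rho^{-2})$ and $\int_\epsilon^1\rho^{-\beta}\,d\rho\le C\epsilon^{1-\beta}$ for $\beta>1$, integration over $F$ yields
\[
\Bigl|\int_F\Bigr|\le C\bigl[\tau^2\epsilon^{-1}+\tau^{1+\gamma}\epsilon^{-\gamma}+\tau^{2+\gamma}\epsilon^{-1-\gamma}+\tau^{2+2\gamma}\epsilon^{-1-2\gamma}\bigr].
\]

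On $T$, hypothesis (iii) gives decay of both $h'$ and $h''$, so $|h(\lambda t)-h(t)|\le C\tau\,t^{-1-\gamma}$ and, by a second-order Taylor expansion using $|h''|\le Cs^{-3-\gamma}$, the symmetric cancellation $|h(\lambda t)+h(t/\lambda)-2h(t)|\le C\tau^2 t^{-1-\gamma}$. Splitting the integrand as $[h(\lambda t)-h(t)][h(t/\lambda)-h(t)]+h(t)[h(\lambda t)+h(t/\lambda)-2h(t)]$, both factors produce integrands of the form $t^{-2-2\gamma}\tau^2$ or $t^{-1-\gamma}\tau^2$ times a bounded factor, hence $\bigl|\int_T\bigr|\le C\tau^2$.

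Finally, choose $\sigma=1-\alpha/2\in(1-\alpha,1)$. Substituting $\epsilon=\tau^\sigma$ into all of the above, every contribution is a positive power of $\tau$ whose exponent is strictly greater than $1$; the smallest such exponent is $\nu:=1+\gamma\alpha/2>1$, which gives the claimed quantitative bound and therefore $\mathfrak{I}(h)=0$. The constants depend only on $\alpha$, $\gamma$, and $C_0$, so the limit defining $\mathfrak{I}(h)$ is uniform in $h$ within the class of the lemma. The main obstacle is the Taylor analysis on $F$: its leading quadratic term has a nonzero integrated coefficient and the nonintegrable blow-up $\rho^{-2}$ near $t=1$, so it must be balanced precisely against the weaker $C^\alpha$ estimate used on $N$ through the single parameter $\epsilon$. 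The tight range $\sigma\in(1-\alpha,1)$ is exactly what makes every error contribution strictly beyond linear order in $\tau$; outside this range either the near-singularity estimate or the remainder from the $C^{1,\gamma}$ expansion fails to beat $|\lambda-1|$.
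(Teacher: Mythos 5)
Your proof is correct and follows essentially the same route as the paper: both decompose $(0,\infty)$ into a near interval $(1-\epsilon,1+\epsilon)$, a middle region $(0,1-\epsilon)\cup(1+\epsilon,2)$, and a tail $(2,\infty)$, apply hypotheses (i), (ii), (iii) respectively, set the threshold $\epsilon$ equal to a power of $\lambda-1$ with exponent in the range that makes every contribution strictly super-linear, and obtain the quantitative bound from which $\mathfrak I(h)=0$ follows at once. The only difference is in the middle region, where the paper packages the second-order cancellation through the auxiliary function $\psi(\mu)=h(\mu t)h(t/\mu)-h(t)^2$ (using $\psi(1)=\psi'(1)=0$ and the mean value theorem), while you expand $h(\lambda t)$ and $h(t/\lambda)$ separately by Taylor; both yield bounds of the form $C(\lambda-1)^{1+\gamma}|t-1|^{-1-\gamma}$ up to harmless extra terms.
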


Finally, the third one states that $\mathfrak I(w_0+h)=\mathfrak I(w_0)$ whenever $\mathfrak I(h)=0$.

\begin{lem}\label{nodependeh} Let $w_1$ and $w_2$ be $L^2(\R)$ functions. Assume that the derivative at $\lambda=1^+$ in the expression $\mathfrak I(w_1)$ exists, and that
\[\mathfrak I(w_2)=0.\]
Then,
\[\mathfrak I(w_1+w_2)=\mathfrak I(w_1).\]
\end{lem}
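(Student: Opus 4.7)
The plan is to treat $\mathfrak{I}$ as a positive semi-definite quadratic form on $L^2(0,\infty)$, so that a Cauchy--Schwarz inequality for the associated polarized bilinear form will force any ``$\mathfrak{I}$-null'' vector to be ``$\mathfrak{I}$-orthogonal'' to everything. Concretely, I introduce the bilinear form
\[
B(f,g;\lambda)=\int_0^\infty f(\lambda t)g(t/\lambda)\,dt,
\]
so that $\mathfrak{I}(w)=-\left.\frac{d}{d\lambda}\right|_{\lambda=1^+}B(w,w;\lambda)$. Expanding $B(w_1+w_2,w_1+w_2;\lambda)$ by bilinearity gives
\[
B(w_1+w_2,w_1+w_2;\lambda)=B(w_1,w_1;\lambda)+B(w_2,w_2;\lambda)+B(w_1,w_2;\lambda)+B(w_2,w_1;\lambda),
\]
so the lemma reduces to showing that the right derivative of the cross term $B(w_1,w_2;\lambda)+B(w_2,w_1;\lambda)$ at $\lambda=1$ vanishes.

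The key ingredient is a Cauchy--Schwarz estimate uniform in $\lambda$. For each $\lambda>0$ set
\[
Q_\lambda(w):=\|w\|_{L^2(0,\infty)}^{2}-B(w,w;\lambda).
\]
Exactly the Cauchy--Schwarz computation used to prove \eqref{negatiu} in Section~\ref{sec2}, combined with $\|w(\lambda\,\cdot)\|_{L^2}^2=\lambda^{-1}\|w\|_{L^2}^2$ and $\|w(\cdot/\lambda)\|_{L^2}^2=\lambda\|w\|_{L^2}^2$, yields $B(w,w;\lambda)\le\|w\|_{L^2}^{2}$, so $Q_\lambda\ge 0$ on $L^2(0,\infty)$. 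The associated symmetric bilinear form is
\[
\widehat{Q}_\lambda(f,g)=\langle f,g\rangle_{L^2}-\tfrac{1}{2}\bigl[B(f,g;\lambda)+B(g,f;\lambda)\bigr],
\]
and, being polar to a positive semi-definite quadratic form, it obeys
\[
\bigl|\widehat{Q}_\lambda(w_1,w_2)\bigr|^{2}\le Q_\lambda(w_1)\,Q_\lambda(w_2).
\]

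Now I divide by $(\lambda-1)^{2}>0$ and let $\lambda\downarrow 1$. By the very definition of the derivative,
\[
\frac{Q_\lambda(w)}{\lambda-1}=\frac{\|w\|_{L^2}^{2}-B(w,w;\lambda)}{\lambda-1}\longrightarrow\mathfrak{I}(w)
\]
whenever $\mathfrak{I}(w)$ exists, so our two hypotheses give $Q_\lambda(w_1)/(\lambda-1)\to \mathfrak{I}(w_1)$ and $Q_\lambda(w_2)/(\lambda-1)\to\mathfrak{I}(w_2)=0$. The Cauchy--Schwarz inequality above then forces
\[
\limsup_{\lambda\downarrow 1}\left|\frac{\widehat{Q}_\lambda(w_1,w_2)}{\lambda-1}\right|\le\sqrt{\mathfrak{I}(w_1)\cdot 0}=0.
\]
Since $2\widehat{Q}_\lambda(w_1,w_2)=2\langle w_1,w_2\rangle-[B(w_1,w_2;\lambda)+B(w_2,w_1;\lambda)]$, this is exactly the statement that the difference quotient of the cross term tends to zero. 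Going back to the bilinear expansion, the three summands converge, and therefore $\mathfrak{I}(w_1+w_2)$ exists and equals $\mathfrak{I}(w_1)+\mathfrak{I}(w_2)+0=\mathfrak{I}(w_1)$.

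There is no serious obstacle here: everything hinges on the a priori bound $B(w,w;\lambda)\le B(w,w;1)$, which is the same Cauchy--Schwarz step already used in Section~\ref{sec2}, together with the elementary principle that a null vector of a positive semi-definite quadratic form lies in the radical of its polarization.
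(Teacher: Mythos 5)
Your proof is correct and is essentially the same argument as the paper's: the paper introduces the difference-quotient bilinear form $(w_1,w_2)_\lambda$ (which is your $\widehat{Q}_\lambda(w_1,w_2)/(\lambda-1)$), shows it is positive semi-definite via the same Cauchy--Schwarz step, and then sends $\lambda\downarrow1$ to kill the cross term. The only difference is cosmetic: the paper builds the $1/(\lambda-1)$ factor into the bilinear form from the start (Lemma~\ref{scalar}), whereas you polarize $Q_\lambda$ first and divide afterward.
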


Let us now give the proofs of Lemmas \ref{c2}, \ref{lema1}, and \ref{nodependeh}.
We start proving Lemma \ref{nodependeh}. For it, is useful to introduce the bilinear form
\[(w_1,w_2)=-\frac12\left.\frac{d}{d\lambda}\right|_{\lambda=1^+}\int_{0}^{\infty} \left\{w_1\left(\lambda t\right)w_2\left(\frac{t}{\lambda}\right)+w_1\left(\frac{t}{\lambda}\right)w_2\left(\lambda t\right)\right\}dt,\]
and more generally, the bilinear forms
\begin{equation}\label{pelambda}
(w_1,w_2)_\lambda=-\frac{1}{2(\lambda-1)}\int_{0}^{\infty} \left\{w_1\left(\lambda t\right)w_2\left(\frac{t}{\lambda}\right)+w_1\left(\frac{t}{\lambda}\right)w_2\left(\lambda t\right)-2w_1(t)w_2(t)\right\}dt,
\end{equation}
for $\lambda>1$.

It is clear that $\lim_{\lambda\downarrow 1}(w_1,w_2)_\lambda=(w_1,w_2)$ whenever the limit exists, and that $(w,w)=\mathfrak{I}(w)$.
The following lemma shows that these bilinear forms are positive definite and, thus, they satisfy the Cauchy-Schwarz inequality.

\begin{lem}\label{scalar} The following properties hold.
\begin{itemize}
\item[(a)] $(w_1,w_2)_\lambda$ is a bilinear map.
\item[(b)] $(w,w)_\lambda\geq0$ for all $w\in L^2(\mathbb R_+)$.
\item[(c)] $|(w_1,w_2)_\lambda|^2\leq (w_1,w_1)_\lambda (w_2,w_2)_\lambda$.
\end{itemize}
\end{lem}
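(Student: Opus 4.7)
The plan is to verify the three items in order; each is quite short once one identifies the right elementary manipulation.

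\textbf{Bilinearity (a).} This is immediate. Each of the three terms in the integrand of \eqref{pelambda} is separately linear in $w_1$ and separately linear in $w_2$, and $\int_0^\infty$ together with the prefactor $-1/(2(\lambda-1))$ preserves linearity. So no work is needed beyond writing this down.

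\textbf{Positivity (b).} The key observation is a change of variables: setting $s=\lambda t$ in one factor and $s=t/\lambda$ in the other gives
\[\int_0^\infty w(\lambda t)^2\,dt=\tfrac{1}{\lambda}\|w\|_{L^2(\R_+)}^2,\qquad \int_0^\infty w(t/\lambda)^2\,dt=\lambda\,\|w\|_{L^2(\R_+)}^2.\]
Thus, by the classical Cauchy--Schwarz inequality in $L^2(\R_+)$ applied to the two functions $t\mapsto w(\lambda t)$ and $t\mapsto w(t/\lambda)$,
\[\int_0^\infty w(\lambda t)\,w(t/\lambda)\,dt\ \le\ \left(\tfrac{1}{\lambda}\|w\|_{L^2}^2\right)^{1/2}\left(\lambda\|w\|_{L^2}^2\right)^{1/2}=\int_0^\infty w(t)^2\,dt.\]
Since $\lambda>1$, the prefactor $-1/(\lambda-1)$ is negative, and plugging this bound into $(w,w)_\lambda$ (with $w_1=w_2=w$ the factor of $2$ simplifies) yields $(w,w)_\lambda\ge 0$. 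The integrals are all finite because $w\in L^2(\R_+)$.

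\textbf{Cauchy--Schwarz (c).} First note the form is symmetric: swapping $w_1\leftrightarrow w_2$ in \eqref{pelambda} leaves the integrand invariant (the two cross terms $w_1(\lambda t)w_2(t/\lambda)$ and $w_1(t/\lambda)w_2(\lambda t)$ are interchanged, and the diagonal $2w_1(t)w_2(t)$ is symmetric). Combined with bilinearity and (b), the map $(\cdot,\cdot)_\lambda$ is a symmetric, positive semi-definite bilinear form on $L^2(\R_+)$, so the standard polynomial trick applies: for every $\tau\in\R$,
\[0\le (w_1+\tau w_2,w_1+\tau w_2)_\lambda = (w_1,w_1)_\lambda + 2\tau(w_1,w_2)_\lambda + \tau^2(w_2,w_2)_\lambda.\]
If $(w_2,w_2)_\lambda>0$, minimizing in $\tau$ (equivalently, non-positivity of the discriminant) gives $|(w_1,w_2)_\lambda|^2\le (w_1,w_1)_\lambda(w_2,w_2)_\lambda$. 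If $(w_2,w_2)_\lambda=0$ then the quadratic is in fact linear in $\tau$ and non-negative for all $\tau$, forcing $(w_1,w_2)_\lambda=0$, so (c) holds trivially.

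I do not anticipate any real obstacle; the main point is simply recognizing that (b) is exactly Cauchy--Schwarz in $L^2(\R_+)$ after the scaling computation, and then (c) is the abstract Cauchy--Schwarz for positive semi-definite bilinear forms.
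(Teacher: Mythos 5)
Your proof is correct and follows essentially the same route as the paper: part~(a) is immediate by inspection, part~(b) is precisely the scaling computation $\|w_\lambda\|_{L^2}\|w_{1/\lambda}\|_{L^2}=\|w\|_{L^2}^2$ followed by Cauchy--Schwarz (the paper phrases it as H\"older), and part~(c) is the abstract Cauchy--Schwarz inequality for a symmetric positive semi-definite bilinear form, which the paper simply states as a consequence of (a) and (b). You merely spell out a couple of steps that the paper leaves implicit.
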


\begin{proof} Part (a) is immediate. Part (b) follows from the H\"older inequality
\[\|w_{\lambda}w_{1/\lambda}\|_{L^1}\leq \|w_\lambda\|_{L^2}\|w_{1/\lambda}\|_{L^2}=\|w\|_{L^2}^2,\]
where $w_{\lambda}(t)=w(\lambda t)$.
Part (c) is a consequence of (a) and (b).
\end{proof}

Now, Lemma \ref{nodependeh} is an immediate consequence of this Cauchy-Schwarz inequality.

\begin{proof}[Proof of Lemma \ref{nodependeh}] By Lemma \ref{scalar} (iii) we have
\[0\leq |(w_1,w_2)_\lambda|\leq \sqrt{(w_1,w_1)_\lambda}\sqrt{(w_2,w_2)_\lambda}\longrightarrow0.\]
Thus, $(w_1,w_2)=\lim_{\lambda\downarrow1}(w_1,w_2)_\lambda=0$ and
\[\mathfrak I(w_1+w_2)=\mathfrak I(w_1)+\mathfrak I(w_2)+2(w_1,w_2)=\mathfrak I(w_1).\]
\end{proof}

Next we prove that $\mathfrak I(h)=0$. For this, we will need a preliminary lemma.

\begin{lem}\label{h} Let $h$ be a function satisfying {\rm(i)}, {\rm(ii)}, and {\rm(iii)} in Propostion \ref{propoperador}, $\lambda\in(1,3/2)$, and $\tau\in(0,1)$ be such that $\tau/2>\lambda-1$.
Let $\alpha$, $\gamma$, and $C_0$ be the constants appearing in {\rm(i)-(ii)-(iii)}. Then,
\[
\left|h(\lambda t)h\left(\frac{t}{\lambda}\right)-h(t)^2\right|\leq \left\{
\begin{array}{ll}\displaystyle
C\max\left\{\left|t-\lambda\right|^{\alpha},\left|t-1/\lambda\right|^{\alpha}\right\} & t\in(1-\tau,1+\tau)\vspace{2mm}\\ \displaystyle
C(\lambda-1)^{1+\gamma}|t-1|^{-1-\gamma} & t\in(0,1-\tau)\cup(1+\tau,2)\vspace{2mm}\\ \displaystyle
C(\lambda-1)^2t^{-1-\gamma} & t\in(2,\infty),
\end{array}
\right.\]
where the constant $C$ depends only on $C_0$.
\end{lem}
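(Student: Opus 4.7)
The argument is a case-by-case analysis of the three regions, based on the algebraic identity
$$h(\lambda t)h(t/\lambda) - h(t)^2 = h(t)\bigl[h(\lambda t) - 2h(t) + h(t/\lambda)\bigr] + \bigl[h(\lambda t) - h(t)\bigr]\bigl[h(t/\lambda) - h(t)\bigr]$$
combined with Taylor's theorem. In each region I would invoke the hypothesis that affords the strongest control of $h$ near $t$.

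For $t \in (1-\tau, 1+\tau)$, I would instead use the simpler split $h(\lambda t)h(t/\lambda) - h(t)^2 = h(\lambda t)[h(t/\lambda) - h(t)] + h(t)[h(\lambda t) - h(t)]$ together with hypothesis~(i). Since $t \leq 2$, each difference is bounded by $C_0 t^{\alpha}(\lambda-1)^{\alpha} \leq C(\lambda-1)^{\alpha}$, and $\|h\|_{\infty} \leq C_0$; this gives $|h(\lambda t)h(t/\lambda) - h(t)^2| \leq C(\lambda-1)^{\alpha}$. The triangle inequality $|t-\lambda| + |t-1/\lambda| \geq \lambda - 1/\lambda \geq \lambda-1$ then yields $\max\{|t-\lambda|,|t-1/\lambda|\} \geq (\lambda-1)/2$, so the bound $C(\lambda-1)^{\alpha}$ implies the claim.

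For $t \in (0, 1-\tau) \cup (1+\tau, 2)$, I would apply the identity above and Taylor-expand $h$ to first order around $t$ via hypothesis~(ii) with $\beta = 1+\gamma$. The constraint $\tau/2 > \lambda - 1$ forces $\lambda < 1 + \tau/2$, so the segment $[t/\lambda, \lambda t]$ lies at distance at least $|t-1|/2$ from the singular point $1$; hence (ii) gives $|h'(s)| \leq C|t-1|^{-1}$ and $[h']_{C^{\gamma}} \leq C|t-1|^{-1-\gamma}$ on this segment. The centered second difference then satisfies
$$h(\lambda t) - 2h(t) + h(t/\lambda) = \frac{h'(t)\, t\,(\lambda-1)^2}{\lambda} + O\bigl(|t-1|^{-1-\gamma}(\lambda-1)^{1+\gamma}\bigr),$$
and both terms are $O\bigl((\lambda-1)^{1+\gamma}|t-1|^{-1-\gamma}\bigr)$ after using $\lambda-1 < |t-1|/2 < 1$. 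The product term is dominated by $|h'(t)|^2 t^2 (\lambda-1)^2 \leq C(\lambda-1)^2 |t-1|^{-2}$ and is absorbed into the same estimate.

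For $t \in (2,\infty)$, I would again apply the identity but Taylor-expand to second order via hypothesis~(iii). For $t \geq 4$ both $\lambda t$ and $t/\lambda$ lie in $(t/2, 3t/2) \subset (2,\infty)$, where $|h''| \leq Ct^{-3-\gamma}$; the centered second difference is then $O(t^{-1-\gamma}(\lambda-1)^2)$ and the product term is $O(t^{-2-2\gamma}(\lambda-1)^2)$, giving the claim via the identity. For $t \in (2,4)$, where $t/\lambda$ may drop slightly below $2$, one patches (iii) on $(t, \lambda t)$ with (ii) applied with $\rho \sim 1$ on $(t/\lambda, t)$; the required bound then follows because $t^{-1-\gamma}$ is bounded above and below by positive constants on this compact range.

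The main obstacle is the bookkeeping: for each $t$ one must verify that the segment joining $t/\lambda$ and $\lambda t$ remains in a region where the relevant regularity hypothesis yields the correct quantitative seminorm estimate. The constraint $\tau/2 > \lambda-1$ is precisely what makes this work in Case 2, and the split $t \geq 4$ vs.\ $t \in (2,4)$ handles the mild discontinuity at $t=2$ between hypotheses~(ii) and~(iii). Beyond these bookkeeping issues, the proof reduces to routine Taylor-remainder accounting.
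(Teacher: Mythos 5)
The proposal is correct, and its organization is genuinely different from the paper's. For fixed $t$, the paper introduces the auxiliary one-variable function $\psi(\mu)=h(\mu t)h(t/\mu)-h(t)^2$ and applies the mean-value theorem: since $\psi(1)=\psi'(1)=0$, one has $|\psi(\lambda)|\le(\lambda-1)\,|\psi'(\mu)-\psi'(1)|$ for some $\mu\in(1,\lambda)$, and the H\"older modulus of $\psi'$ is then estimated via (ii) (resp.\ (iii)). You instead work directly with the three values $h(\lambda t),h(t),h(t/\lambda)$, splitting $h(\lambda t)h(t/\lambda)-h(t)^2$ into $h(t)$ times a centered second difference plus a cross product of first differences, and Taylor-expand each piece; the analytical content (the quantity is ``second order in $\lambda-1$'' because both its value and its $\lambda$-derivative vanish at $\lambda=1$) is the same, so the resulting bounds agree. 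For $t\in(1-\tau,1+\tau)$, the paper subtracts $h(1)$ and uses $|\tilde h(s)|\le C_0|s-1|^\alpha$; your simpler telescoping plus the elementary observation $\max\{|t-\lambda|,|t-1/\lambda|\}\ge(\lambda-1)/2$ is also valid and gives a bound ($C(\lambda-1)^\alpha$) that dominates the stated one. One subtlety — present in \emph{both} your proposal and the paper's own proof — is the range $t\in(2,2\lambda)$, where $t/\lambda$ falls just below $2$ and hypothesis (iii) no longer supplies a second-derivative bound; there, patching with (ii) at $\rho\sim 1$ yields only $C(\lambda-1)^{1+\gamma}$, not the stated $C(\lambda-1)^2t^{-1-\gamma}$, so the phrase ``the required bound then follows'' is not literally justified. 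This is harmless for the downstream estimate of $I_\lambda$ (the interval $(2,2\lambda)$ has length $O(\lambda-1)$, so its contribution is $O((\lambda-1)^{2+\gamma})$), but it should be flagged rather than asserted.
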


\begin{proof} Let $t\in(1-\tau,1+\tau)$. Let us denote $\widetilde h=h-h(1)$. Then,
\[h\left(\lambda t\right)h\left(\frac{t}{\lambda}\right)-h(t)^2=
\widetilde h\left(\lambda t\right)\widetilde h\left(\frac{t}{\lambda}\right)-\widetilde h(t)^2+h(1)\left(\widetilde h\left(\lambda t\right)+\widetilde h\left(\frac{t}{\lambda}\right)-2\widetilde h(t)\right).\]
Therefore, using that $|\widetilde h(t)|\leq C_0|t-1|^\alpha$ and $\|h\|_{L^\infty(\R)}\leq C_0$, we obtain
\begin{eqnarray*}
\left|h\left(\lambda t\right)h\left(\frac{t}{\lambda}\right)-h(t)^2\right|&\leq&
C\left|\lambda t-1\right|^{\alpha}\left|\frac{t}{\lambda}-1\right|^{\alpha}+
C|t-1|^{2\alpha}+C|\lambda t-1|^{\alpha}+\\
& & +C\left|\frac{t}{\lambda}-1\right|^{\alpha}+C|t-1|^\alpha \\
&\leq&C\max\left\{\left|t-\lambda\right|^{\alpha},\left|t-\frac{1}{\lambda}\right|^{\alpha}\right\}.
\end{eqnarray*}

Let now $t\in(0,1-\tau)\cup(1+\tau,2)$ and recall that $\lambda\in(1,1+\tau/2)$.
Define, for $\mu\in[1,\lambda]$,
\[\psi(\mu)=h\left(\mu t\right)h\left(\frac{t}{\mu}\right)-h(t)^2.\]
By the mean value theorem, $\psi(\lambda)=\psi(1)+\psi'(\mu)(\lambda-1)$ for some $\mu\in(1,\lambda)$.
Moreover, observing that $\psi(1)=\psi'(1)=0$, we deduce
\[|\psi(\lambda)|\leq (\lambda-1)|\psi'(\mu)-\psi'(1)|.\]

Next we claim that
\begin{equation}\label{psimu} |\psi'(\mu)-\psi'(1)|\leq C|\mu-1|^\gamma|t-1|^{-1-\gamma}.\end{equation}
This yields the desired bound for $t\in(0,1-\tau)\cup(1+\tau,2)$.

To prove this claim, note that
\[\psi'(\mu)=th'\left(\mu t\right)h\left(\frac{t}{\mu}\right)-\frac{t}{\mu^2}h\left(\mu t\right)h'\left(\frac{t}{\mu}\right).\]
Thus, using the bounds from (ii) with $\beta$ replaced by $\gamma$, $1$, and $1+\gamma$,
\[\begin{split}
|\psi'(\mu)-&\psi'(1)|\leq t|h'(\mu t)-h'(t)| \left|h\left(\frac{t}{\mu}\right)\right|+t\left|h\left(\frac{t}{\mu}\right)-h(t)\right| |h'(t)|+\\
&\hspace{20mm}+t\left|h'\left(\frac{t}{\mu}\right)-h'(t)\right| \frac{|h(\mu t)|}{\mu^2}+t\left|\frac{h(\mu t)}{\mu^2}-h(t)\right||h'(t)|\\
&\leq C|\mu t-t|^\gamma m^{-1-\gamma}+
C\left|\frac t\mu-t\right|^\gamma m^{-\gamma}|t-1|^{-1}+
\frac{C}{\mu^2}\left|\frac t\mu-t\right|^\gamma m^{-1-\gamma}+\\
&\hspace{10mm}+\frac{C}{\mu^2}|\mu t-t|^\gamma m^{-\gamma}|t-1|^{-1}+C(\mu-1)|t-1|^{-1}\\
&\leq  C(\mu-1)^\gamma m^{-1-\gamma},
\end{split}\]
where $m=\min\left\{|\mu t-1|,|t-1|,|t/\mu-1|\right\}$.

Furthermore, since $\mu-1<|t-1|/2$, we have $m\geq \frac14|t-1|$, and hence \eqref{psimu} follows.

Finally, if $t\in(2,\infty)$, with a similar argument but using the bound (iii) instead of (ii), we obtain
\[|\psi(\lambda)|\leq C(\lambda-1)^2t^{-1-\gamma},\]
and we are done.
\end{proof}

Let us now give the

\begin{proof}[Proof of Lemma \ref{lema1}] Let us call
\[I_\lambda=\int_{0}^{\infty}\left\{h\left(\lambda t\right)h\left(\frac{t}{\lambda}\right)-h(t)^2\right\}dx.\]
For each $\lambda\in(1,3/2)$, take $\tau\in(0,1)$ such that $\lambda-1<\tau/2$ to be chosen later. Then, by Lemma \ref{h},
\begin{eqnarray*}
|I_\lambda| &\leq &
C(\lambda-1)^{1+\gamma}\int_0^{1-\tau}|t-1|^{-1-\gamma}dt+C\int_{1-\tau}^1\left|t-\lambda\right|^{\alpha}dt+\\
& &+C\int_1^{1+\tau}\left|t-\frac{1}{\lambda}\right|^{\alpha}dt+C(\lambda-1)^{1+\gamma}\int_{1+\tau}^2 |t-1|^{-1-\gamma}dt+\\
&& + C(\lambda-1)^2\int_{2}^\infty t^{-1-s}dt\\
&\leq& C(\lambda-1)^{1+\gamma}\tau^{-\gamma}+C\left(\tau+\lambda-1\right)^{\alpha+1}+C(\lambda-1)^{1+\gamma}\tau^{-\gamma}+\\
&&+C\left(\tau+1-\frac{1}{\lambda}\right)^{\alpha+1}+C(\lambda-1)^2.
\end{eqnarray*}

Choose now
\[\tau=(\lambda-1)^{\theta},\]
with $\theta<1$ to be chosen later.
Then,
\[\tau+\lambda-1\leq 2\tau\qquad \mbox{and} \qquad \tau+1-\frac1\lambda\leq 2\tau,\]
and hence
\[\left|I_\lambda\right|\leq
C(\lambda-1)^{(\alpha+1)\theta}+C(\lambda-1)^{1+\gamma-\theta\gamma}+C(\lambda-1)^2.\]

Finally, choose $\theta$ such that $(\alpha+1)\theta>1$ and $1+\gamma-\theta\gamma>1$, that is, satisfying
\[\frac{1}{1+\alpha}<\theta<1.\]
Then, for $\nu=\min\{(\alpha+1)\theta, 1+\gamma-\gamma\theta\}>1$, it holds
\[\left|\int_{0}^{\infty}\left\{h\left(\lambda t\right)h\left(\frac{t}{\lambda}\right)-h(t)^2\right\}dt\right|\leq C|\lambda-1|^\nu,\]
as desired.
\end{proof}

Next we prove Lemma \ref{c2}.

\begin{proof}[Proof of Lemma \ref{c2}] Let
\[w_1(t)=\log^-|t-1|\qquad\textrm{and}\qquad w_2(t)=\chi_{[0,1]}(t).\]
We will compute first $\mathfrak I(w_1)$.

Define
\[\Psi(t)=\int_0^t\frac{\log|r-1|}{r}dr.\]
It is straightforward to check that, if $\lambda>1$, the function
\begin{eqnarray*} \vartheta_\lambda(t)&=&\left(t-\frac{1}{\lambda}\right)\log|\lambda t-1|\log\left|\frac{t}{\lambda}-1\right|+
(\lambda-t)\log\left|\frac{t}{\lambda}-1\right|\\
& &-\frac{\lambda^2-1}{\lambda}\log(\lambda^2-1)\log\left|\frac{t}{\lambda}-1\right|-
\frac{\lambda^2-1}{\lambda}\Psi\left(\frac{\lambda(\lambda-t)}{\lambda^2-1}\right)\\
& &+2t-\frac{\lambda t-1}{\lambda}\log|\lambda t-1|
\end{eqnarray*}
is a primitive of $\log|\lambda t-1|\log\left|\frac{t}{\lambda}-1\right|$.
Denoting $I_\lambda=\int_{0}^{\infty} w_1\left(\lambda t\right)w_1\left(\frac{t}{\lambda}\right)dt$, we have
\begin{eqnarray*}I_\lambda-I_1&=&\int_{0}^{\frac{2}{\lambda}}\log|\lambda t-1|\log\left|\frac{t}{\lambda}-1\right|dt-\int_0^2\log^2|t-1|dt\\
&=&\vartheta_\lambda\left(\frac{2}{\lambda}\right)-\vartheta_\lambda(0)-4\\
&=&\left(\frac{\lambda^2-1}{\lambda}\right)
\left\{\Psi\left(\frac{\lambda^2}{\lambda^2-1}\right)-
\Psi\left(\frac{\lambda^2-2}{\lambda^2-1}\right)\right\}+\left(\lambda-\frac{2}{\lambda}\right)\log\left(\frac{2}{\lambda^2}-1\right)+\\
&&+\left(\lambda-\frac{1}{\lambda}\right)\log(\lambda^2-1)\log\left(\frac{2}{\lambda^2}-1\right)
-\frac{4(\lambda-1)}{\lambda},
\end{eqnarray*}
where we have used that
\[I_1=\int_0^2\log^2|t-1|dt=2\int_0^1\log^2t'dt'=2\int_0^\infty r^2e^{-r}dr=2\Gamma(3)=4.\]

Therefore, dividing by $\lambda-1$ and letting $\lambda\downarrow1$,
\begin{eqnarray*}\left.\frac{d}{d\lambda}\right|_{\lambda=1^+}I_\lambda&=&
2\lim_{\lambda\downarrow 1}\int_{\frac{\lambda^2-2}{\lambda^2-1}}^{\frac{\lambda^2}{\lambda^2-1}}\frac{\log|t-1|}{t}\,dt+\\
& &+\lim_{\lambda\downarrow 1}\left\{2\log(\lambda^2-1)\log\left(\frac{2}{\lambda^2}-1\right)
-\frac{\log\left(\frac{2}{\lambda^2}-1\right)}{\lambda-1}-\frac{4}{\lambda}\right\}.\end{eqnarray*}
The first term equals to
\[\lim_{M\rightarrow +\infty}\int_{-M}^{M}\frac{2\log|t-1|}{t}dt,\]
while the second, using that $\log(1+x)\sim x$ for $x\sim 0$, equals to
\[\lim_{\lambda\downarrow 1}\left\{2\log(\lambda^2-1)\left(\frac{2}{\lambda^2}-2\right)-\frac{\frac{2}{\lambda^2}-2}{\lambda-1}-\frac{4}{\lambda}\right\}=0+4-4=0.\]
Hence,
\begin{eqnarray*}\left.\frac{d}{d\lambda}\right|_{\lambda=1^+}I_\lambda&=&\lim_{M\rightarrow +\infty}\int_{-M}^{M}\frac{2\log|t-1|}{t}dt=\lim_{M\rightarrow +\infty}\int_{-M}^{M}\frac{2\log|t|}{t+1}dt\\
&=&\lim_{M\rightarrow +\infty}\left\{\int_{-M}^{0}\frac{2\log(-t)}{t+1}dt+\int_{0}^{M}\frac{2\log t}{t+1}dt\right\}\\
&=&\lim_{M\rightarrow +\infty}\left\{\int_{0}^{M}\frac{2\log t}{1-t}dt+\int_{0}^{M}\frac{2\log t}{t+1}dt\right\}=\int_{0}^{+\infty}\frac{4\log t}{1-t^2}dt\\
&=&\int_{0}^{1}\frac{4\log t}{1-t^2}dt+\int_{1}^{+\infty}\frac{-4\log\frac{1}{t}}{\frac{1}{t^2}-1}\frac{dt}{t^2}=2\int_{0}^{1}\frac{4\log t}{1-t^2}dt.
\end{eqnarray*}
Furthermore, using that $\frac{1}{1-t^2}=\sum_{n\geq0}t^{2n}$
and that
\[\int_0^1t^n\log t\ dt=-\int_0^1\frac{t^{n+1}}{n+1}\frac{1}{t}dt=-\frac{1}{(n+1)^2},\]
we obtain
\[\int_{0}^{1}\frac{\log t}{1-t^2}dt=-\sum_{n\geq0}\frac{1}{(2n+1)^2}=-\frac{\pi^2}{8},\]
and thus
\[\mathfrak I(w_1)=-\left.\frac{d}{d\lambda}\right|_{\lambda=1^+}I_\lambda=\pi^2.\]

Let us evaluate now $\mathfrak I(w_2)=\mathfrak I(\chi_{[0,1]})$. We have
\[\int_{0}^{+\infty} \chi_{[0,1]}\left(\lambda t\right)\chi_{[0,1]}\left(\frac{t}{\lambda}\right)dt=\int_{0}^{\frac{1}{\lambda}}dt=\frac{1}{\lambda}.\]
Therefore, differentiating with respect to $\lambda$ we obtain $\mathfrak I(w_2)=1$.

Let us finally prove that $(w_1,w_2)=0$, i.e., that
\begin{equation}\label{derAB}
\left.\frac{d}{d\lambda}\right|_{\lambda=1^+}\left\{\int_0^{\lambda}\log|1-\lambda t|dt+\int_0^{\frac{1}{\lambda}}\log\left|1-\frac{t}{\lambda}\right|dt\right\}=0.
\end{equation}
We have
\begin{eqnarray*}\int_0^{\lambda}\log|1-\lambda t|dt&=&\frac{1}{\lambda}\bigl[(\lambda t-1)\log|1-\lambda t|-\lambda t\bigr]_0^\lambda\\
&=&\left(\lambda-\frac{1}{\lambda}\right)\log(\lambda^2-1)-\lambda,
\end{eqnarray*}
and similarly,
\[\int_0^{\frac{1}{\lambda}}\log\left|1-\frac{t}{\lambda}\right|dt= \left(\frac{1}{\lambda}-\lambda\right)\log\left(1-\frac{1}{\lambda^2}\right)-\frac{1}{\lambda}.
\]
Thus,
\[\left|\int_0^{\lambda}\log|1-\lambda t|dt+\int_0^{\frac{1}{\lambda}}\log\left|1-\frac{t}{\lambda}\right|dt-2\int_0^1\log|1-t|dt\right|=\]
\[=\left|\frac{2(\lambda^2-1)}{\lambda}\log\lambda-\frac{(\lambda-1)^2}{\lambda}\right|\leq 4(\lambda-1)^2.\]
Therefore \eqref{derAB} holds, and the proposition is proved.
\end{proof}

Finally, to end this section, we give the:

\begin{proof}[Proof of Proposition \ref{propoperador}] Let us write $\varphi=w_0+h$, where $w_0$ is given by \eqref{w0}. Then, for each $\lambda>1$ we have
\[(\varphi,\varphi)_\lambda=(w_0,w_0)_\lambda+2(w_0,h)_\lambda+(h,h)_\lambda,\]
where $(\cdot,\cdot)_\lambda$ is defined by \eqref{pelambda}.
Using Lemma \ref{scalar} (c) and Lemma \ref{lema1}, we deduce
\[\left|(\varphi,\varphi)_\lambda-A^2\pi^2-B^2\right|\leq \left|(w_0,w_0)_\lambda-A^2\pi^2-B^2\right|+C|\lambda-1|^\nu.\]
The constants $C$ and $\nu$ depend only on $\alpha$, $\gamma$, and $C_0$, and by Lemma \ref{c2} the right hand side goes to $0$ as $\lambda\downarrow1$, since $(w_0,w_0)_\lambda\rightarrow \mathfrak I(w_0)$ as $\lambda\downarrow1$.
\end{proof}

\section{Proof of the Pohozaev identity in non-star-shaped domains} \label{sec8}

In this section we prove Proposition \ref{intparts} for general $C^{1,1}$ domains.
The key idea is that every $C^{1,1}$ domain is locally star-shaped, in the sense that its intersection with any small ball is star-shaped with respect to some point.
To exploit this, we use a partition of unity to split the function $u$ into a set of functions $u_1$, ..., $u_m$, each one with support in a small ball.
However, note that the Pohozaev identity is quadratic in $u$, and hence we must introduce a bilinear version of this identity, namely
\begin{equation}\label{bilinear}\begin{split}\int_\Omega(x\cdot\nabla &u_1)(-\Delta)^su_2\, dx+\int_\Omega(x\cdot\nabla u_2)(-\Delta)^su_1\,dx=\frac{2s-n}{2}\int_\Omega u_1(-\Delta)^su_2\, dx+\\
&+\frac{2s-n}{2}\int_{\Omega}u_2(-\Delta)^su_1\, dx-\Gamma(1+s)^2\int_{\partial\Omega}\frac{u_1}{\delta^{s}}\frac{u_2}{\delta^{s}}(x\cdot\nu)\, d\sigma.\end{split}\end{equation}

The following lemma states that this bilinear identity holds whenever the two functions $u_1$ and $u_2$ have disjoint compact supports.
In this case, the last term in the previous identity equals 0, and since $(-\Delta)^s u_i$ is evaluated only outside the support of $u_i$, we only need to require $\nabla u_i\in L^1(\R^n)$.

\begin{lem}\label{duesboles} Let $u_1$ and $u_2$ be $W^{1,1}(\R^n)$ functions with disjoint compact supports $K_1$ and $K_2$.
Then,
\[\begin{split}\int_{K_1}(x\cdot\nabla u_1)(-\Delta)^su_2\, dx&+\int_{K_2}(x\cdot\nabla u_2)(-\Delta)^su_1\,dx=\\
&=\frac{2s-n}{2}\int_{K_1}u_1(-\Delta)^su_2\, dx+\frac{2s-n}{2}\int_{K_2}u_2(-\Delta)^su_1\, dx.\end{split}\]
\end{lem}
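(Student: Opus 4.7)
The crucial simplification is that, since $K_1$ and $K_2$ are disjoint and compact, we have $\mathrm{dist}(K_1,K_2)>0$, so the kernel $|x-y|^{-(n+2s)}$ is smooth and bounded on $K_1\times K_2$, and no principal value is needed. Specifically, for $x\in K_1$ we have $u_2(x)=0$, and hence
\[(-\Delta)^s u_2(x)=-c_{n,s}\int_{K_2}\frac{u_2(y)}{|x-y|^{n+2s}}\,dy,\]
and symmetrically for $(-\Delta)^su_1$ on $K_2$. Substituting these expressions into both sides of the claimed identity, and relabelling the dummy variables in the second summand of each side so that always $x\in K_1$ and $y\in K_2$, the lemma reduces to the single identity
\[\int_{K_1}\!\!\int_{K_2}\frac{(x\cdot\nabla u_1(x))\,u_2(y)+u_1(x)\,(y\cdot\nabla u_2(y))}{|x-y|^{n+2s}}\,dy\,dx=(2s-n)\!\int_{K_1}\!\!\int_{K_2}\frac{u_1(x)\,u_2(y)}{|x-y|^{n+2s}}\,dy\,dx.\]

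To prove this reduced identity, the plan is to evaluate the quantity
\[J:=\int_{\R^n}\!\!\int_{\R^n}(x\cdot\nabla_x+y\cdot\nabla_y)\!\left[\frac{u_1(x)\,u_2(y)}{|x-y|^{n+2s}}\right]dx\,dy\]
in two different ways. On the one hand, integrating by parts in $x$ and in $y$ separately ---each $u_i$ is $W^{1,1}$ with compact support, so boundary terms vanish and $\int_{\R^n} x\cdot\nabla_x G\,dx=-n\int_{\R^n}G\,dx$ for any sufficiently integrable $G(\cdot,y)$--- we obtain
\[J=-2n\int_{\R^n}\!\!\int_{\R^n}\frac{u_1(x)\,u_2(y)}{|x-y|^{n+2s}}\,dx\,dy.\]
On the other hand, applying the product rule directly and using that $(x,y)\mapsto |x-y|^{-(n+2s)}$ is homogeneous of degree $-(n+2s)$ under the joint scaling $(x,y)\mapsto(\lambda x,\lambda y)$, Euler's identity gives
\[(x\cdot\nabla_x+y\cdot\nabla_y)\frac{1}{|x-y|^{n+2s}}=-\frac{n+2s}{|x-y|^{n+2s}}\]
(which one may also check by the direct computation $\nabla_x|x-y|^{-\alpha}=-\alpha(x-y)|x-y|^{-\alpha-2}$ and the opposite for $\nabla_y$). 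Thus
\[J=\int\!\!\int\frac{(x\cdot\nabla u_1)u_2+u_1(y\cdot\nabla u_2)}{|x-y|^{n+2s}}\,dx\,dy-(n+2s)\!\int\!\!\int\frac{u_1\,u_2}{|x-y|^{n+2s}}\,dx\,dy.\]
Equating the two expressions for $J$ and simplifying yields exactly the reduced identity above.

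The only point requiring some care is the justification of the integration by parts under the mere $W^{1,1}$ assumption. However, this is routine: since $\mathrm{supp}\,u_1$ and $\mathrm{supp}\,u_2$ are compact and disjoint, the kernel $|x-y|^{-(n+2s)}$ is $C^\infty$ on a neighborhood of $K_1\times K_2$, so the function $y\mapsto\int u_1(x)|x-y|^{-(n+2s)}\,dx$ is smooth on a neighborhood of $K_2$ and similarly with the roles swapped. The integration by parts $\int x\cdot\nabla u_i\cdot \phi\,dx=-\int u_i(n\phi+x\cdot\nabla\phi)\,dx$ against a $C^1$ test function $\phi$ is a standard property of $W^{1,1}_c$ functions (and can be verified by a mollification argument). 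I expect no genuine obstacle beyond carefully bookkeeping signs and variable swaps; the disjointness of the supports removes every issue that would otherwise arise from the singularity of the kernel.
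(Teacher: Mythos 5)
Your proof is correct, and it takes a somewhat different route from the paper's, though both proofs ultimately exploit the same underlying fact: the kernel $|x-y|^{-(n+2s)}$ is homogeneous of degree $-(n+2s)$ under the joint dilation $(x,y)\mapsto(\lambda x,\lambda y)$. Where you diverge is in how this homogeneity is packaged. You go directly to the double-integral representation over $K_1\times K_2$ (licensed by the disjointness of the supports) and compute the quantity $\int\!\int(x\cdot\nabla_x+y\cdot\nabla_y)[u_1(x)u_2(y)|x-y|^{-(n+2s)}]$ in two ways: by integrating by parts in $x$ and in $y$ (giving $-2n\int\!\int$), and by the product rule combined with Euler's identity for the homogeneous kernel (giving the term involving $x\cdot\nabla u_1$ and $y\cdot\nabla u_2$ minus $(n+2s)\int\!\int$). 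The paper instead first proves the pointwise commutator identity
$(-\Delta)^s(x\cdot\nabla u_i)=x\cdot\nabla(-\Delta)^s u_i+2s(-\Delta)^s u_i$ in $\R^n\setminus K_i$ (essentially a dilation-invariance statement for $(-\Delta)^s$, and derived by the very same computation of $\mathrm{div}_y\bigl((x-y)|x-y|^{-(n+2s)}\bigr)$ that underlies your Euler-identity step), and then integrates by parts in $x$ over $K_1$, using the symmetric bilinear pairing $\int_{K_1}w_1(-\Delta)^sw_2=\int_{K_2}w_2(-\Delta)^sw_1$ for disjointly supported functions to transfer terms between $K_1$ and $K_2$. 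Your version treats $u_1$ and $u_2$ symmetrically from the start, which makes the bookkeeping a bit cleaner and makes it transparent why the final answer depends on the single number $n+2s$; the paper's version isolates a reusable pointwise identity for $(-\Delta)^s$ that is also of independent interest. Both handle the $W^{1,1}$ regularity in the same way, via the smoothness of the kernel away from the diagonal, and your justification of the integration by parts is adequate. I see no gap.
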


\begin{proof} We claim that
\begin{equation}\label{idcabre}
(-\Delta)^s(x\cdot\nabla u_i)=x\cdot\nabla(-\Delta)^su_i+2s(-\Delta)^su_i\qquad \mbox{in}\ \  \R^n\backslash K_i.
\end{equation}
Indeed, using $u_i \equiv 0$ in $\R^n\setminus K_i$ and the definition of $(-\Delta)^s$ in \eqref{laps}, for each $x\in \R^n\backslash K_i$ we have
\[\begin{split}
(-\Delta)^s(x\cdot\nabla u_i)(x)&=c_{n,s}\int_{K_i}\frac{-y\cdot\nabla u_i(y)}{|x-y|^{n+2s}}dy\\
&=c_{n,s}\int_{K_i}\frac{(x-y)\cdot\nabla u_i(y)}{|x-y|^{n+2s}}dy+c_{n,s}\int_{K_i}\frac{-x\cdot\nabla u_i(y)}{|x-y|^{n+2s}}dy\\
&=c_{n,s}\int_{K_i}{\rm div}_y\left(\frac{x-y}{|x-y|^{n+2s}}\right)u_i(y)dy+x\cdot(-\Delta)^s\nabla u_i(x)\\
&=c_{n,s}\int_{K_i}\frac{-2s}{|x-y|^{n+2s}}u_i(y)dy+x\cdot\nabla(-\Delta)^su_i(x)\\
&=2s(-\Delta)^su_i(x)+x\cdot\nabla(-\Delta)^su_i(x),
\end{split}\]
as claimed.

We also note that for all functions $w_1$ and $w_2$ in $L^1(\R^n)$ with disjoint compact supports $W_1$ and $W_2$, it holds the integration by parts formula
\begin{equation}\label{above}
\int_{W_1}w_1(-\Delta)^sw_2=\int_{W_1}\int_{W_2}\frac{-w_1(x)w_2(y)}{|x-y|^{n+2s}}dy\,dx
=\int_{W_2}w_2(-\Delta)^sw_1.
\end{equation}

Using that $(-\Delta)^s u_2$ is smooth in $K_1$ and integrating by parts,
\[\int_{K_1}(x\cdot \nabla u_1)(-\Delta)^su_2= -n\int_{K_1}u_1(-\Delta)^su_2-\int_{K_1}u_1x\cdot\nabla(-\Delta)^su_2.\]
Next we apply the previous claim and also the integration by parts formula \eqref{above} to $w_1=u_1$ and $w_2=x\cdot \nabla u_2$.
We obtain
\[\begin{split}
\int_{K_1}u_1x\cdot\nabla(-\Delta)^su_2&=\int_{K_1}u_1(-\Delta)^s(x\cdot \nabla u_2)-2s\int_{K_1}u_1(-\Delta)^su_2\\
&=\int_{K_2}(-\Delta)^su_1(x\cdot \nabla u_2)-2s\int_{K_1}u_1(-\Delta)^su_2.
\end{split}\]
Hence,
\[\int_{K_1}(x\cdot \nabla u_1)(-\Delta)^su_2=-\int_{K_2}(-\Delta)^su_1(x\cdot \nabla u_2)+(2s-n)\int_{K_1}u_1(-\Delta)^su_2.\]

Finally, again by the integration by parts formula \eqref{above} we find
\[\int_{K_1}u_1(-\Delta)^su_2=\frac12\int_{K_1}u_1(-\Delta)^su_2+\frac12\int_{K_2}u_2(-\Delta)^su_1,\]
and the lemma follows.
\end{proof}

The second lemma states that the bilinear identity \eqref{bilinear} holds whenever the two functions $u_1$ and $u_2$ have compact supports in a ball $B$ such that $\Omega\cap B$ is star-shaped with respect to some point $z_0$ in $\Omega\cap B$.

\begin{lem}\label{unabola} Let $\Omega$ be a bounded $C^{1,1}$ domain, and let $B$ be a ball in $\R^n$.
Assume that there exists $z_0\in \Omega\cap B$ such that
\[(x-z_0)\cdot\nu(x)>0\qquad\mbox{for all}\ x\in\partial\Omega\cap \overline B.\]
Let $u$ be a function satisfying the hypothesis of Proposition \ref{intparts}, and let $u_1=u\eta_1$ and $u_2=u\eta_2$, where $\eta_i\in C^\infty_c(B)$, $i=1,2$.
Then, the following identity holds
\[\int_B(x\cdot\nabla u_1)(-\Delta)^su_2\, dx+\int_B(x\cdot\nabla u_2)(-\Delta)^su_1\,dx=\frac{2s-n}{2}\int_Bu_1(-\Delta)^su_2\, dx+\]
\[+\frac{2s-n}{2}\int_{B}u_2(-\Delta)^su_1\, dx
-\Gamma(1+s)^2\int_{\partial\Omega\cap B}\frac{u_1}{\delta^{s}}\frac{u_2}{\delta^{s}}(x\cdot\nu)\, d\sigma.\]
\end{lem}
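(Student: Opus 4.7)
The approach is to deduce Lemma \ref{unabola} from the already--established strictly star-shaped case of Proposition \ref{intparts} by polarization. Introduce the quadratic expression
\[
Q_{\widetilde{\Omega}}(v) = \int_{\widetilde{\Omega}}(x\cdot\nabla v)(-\Delta)^{s}v\,dx - \frac{2s-n}{2}\int_{\widetilde{\Omega}}v\,(-\Delta)^{s}v\,dx + \frac{\Gamma(1+s)^{2}}{2}\int_{\partial\widetilde{\Omega}}\!\!\left(\frac{v}{\widetilde{\delta}^{\,s}}\right)^{\!2}(x\cdot\nu)\,d\sigma,
\]
so that Proposition \ref{intparts} reads $Q_{\widetilde{\Omega}}(v)=0$ whenever $\widetilde{\Omega}$ is strictly star-shaped and $v$ satisfies (a)--(c). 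Since $Q_{\widetilde{\Omega}}$ is quadratic in $v$, the identity $Q_{\widetilde{\Omega}}(u_{1}+u_{2})-Q_{\widetilde{\Omega}}(u_{1})-Q_{\widetilde{\Omega}}(u_{2})=0$ produces exactly the bilinear identity (with coefficient $\Gamma(1+s)^{2}$ after multiplying by $2$). It therefore suffices to construct one strictly star-shaped auxiliary domain $\widetilde{\Omega}$ on which Proposition \ref{intparts} applies simultaneously to $u_{1}$, $u_{2}$, $u_{1}+u_{2}$, and then to localize the resulting identity back to $B$ and $\partial\Omega\cap B$.

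To build $\widetilde{\Omega}$, pick a smaller concentric ball $B'\Subset B$ with $\mathrm{supp}(\eta_{1})\cup\mathrm{supp}(\eta_{2})\subset B'$. The hypothesis $(x-z_{0})\cdot\nu(x)>0$ on $\partial\Omega\cap\overline{B}$ implies, by a convexity argument on rays from $z_{0}$ (each ray intersects $\partial\Omega\cap\overline{B}$ at most once, since otherwise the sign of the radial derivative of the signed distance function would be forced to change), that $\partial\Omega\cap\overline{B}$ is a radial $C^{1,1}$ graph $r=r(\theta)$ over a subset of $S^{n-1}$ from $z_{0}$. I extend this graph to a $C^{1,1}$ function $\widetilde{r}$ on all of $S^{n-1}$, requiring $\widetilde{r}=r$ on the directions hitting $\partial\Omega\cap\overline{B'}$ and interpolating smoothly outside so as to keep $\widetilde{r}$ bounded and strictly positive. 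The domain $\widetilde{\Omega}=\{z_{0}+t\theta\,:\,0\le t<\widetilde{r}(\theta)\}$ is then bounded, $C^{1,1}$, strictly star-shaped with respect to $z_{0}$, satisfies $\partial\widetilde{\Omega}\cap B'=\partial\Omega\cap B'$, and contains $\mathrm{supp}(u_{i})\subset\overline{\Omega}\cap\overline{B'}$; consequently $\widetilde{\delta}=\delta$ on a neighborhood of the supports of $\eta_{i}\cdot u$.

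The remaining task is to verify hypotheses (a)--(c) for $u_{1}$, $u_{2}$, $u_{1}+u_{2}$ on $\widetilde{\Omega}$ and then to pull the identity back. Hypothesis (b) holds because near $\partial\widetilde{\Omega}\cap B'$ one has $u_{i}/\widetilde{\delta}^{\,s}=\eta_{i}\cdot(u/\delta^{s})$, which is $C^{\alpha}(\overline{\widetilde{\Omega}})$ by hypothesis on $u$ and smoothness of $\eta_{i}$, while near the remainder of $\partial\widetilde{\Omega}$ the function $u_{i}$ vanishes identically. Hypothesis (a) transfers from $u$ to $u_{i}$ via the smoothness of $\eta_{i}$, and (c) follows from the elementary Leibniz formula
\[
(-\Delta)^{s}(u\eta_{i})=\eta_{i}(-\Delta)^{s}u+u(-\Delta)^{s}\eta_{i}-I_{s}(u,\eta_{i}),
\]
each term of which is pointwise bounded on $\widetilde{\Omega}$ by the hypotheses on $u$ and by $\eta_{i}\in C_{c}^{\infty}(B)$. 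Applying $Q_{\widetilde{\Omega}}(v)=0$ to $v=u_{1},u_{2},u_{1}+u_{2}$ and polarizing yields the bilinear identity on $\widetilde{\Omega}$; since $u_{1},u_{2}$ vanish outside $B$ the volume integrals collapse from $\widetilde{\Omega}$ to $B$, and since $u_{1}u_{2}$ vanishes on a neighborhood of $\partial\widetilde{\Omega}\setminus(\partial\Omega\cap B')$ the boundary integral reduces to one over $\partial\Omega\cap B$. The main obstacle is the geometric construction of $\widetilde{\Omega}$: the radial extension $\widetilde{r}$ must be produced with the required $C^{1,1}$ regularity and strict star-shapedness near the transition between directions hitting $\partial\Omega\cap\overline{B'}$ and those that do not; everything else is routine polarization together with the product rule for $(-\Delta)^{s}$ already used in Lemma \ref{duesboles}.
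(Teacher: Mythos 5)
Your proposal follows the same overall route as the paper: reduce to the strictly star-shaped case of Proposition~\ref{intparts} already proved in Section~\ref{sec2}, and recover the bilinear identity by polarization of the quadratic one. The polarization step (the paper uses $\tilde u = (\eta_1\pm\eta_2)u$ and subtracts; you use $Q(u_1+u_2)-Q(u_1)-Q(u_2)$, which is the same thing) and the verification of hypotheses (a)--(c) for the truncated function are essentially identical to the paper's, including the Leibniz formula for $(-\Delta)^s(u\eta_i)$.

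The one substantive deviation is in how the auxiliary star-shaped domain is produced. The paper takes $\tilde\Omega$ \emph{inside} $\Omega\cap B$: since $\Omega\cap B$ is a Lipschitz polar graph about $z_0$ and ${\rm supp}\,\tilde u\Subset B'\Subset B$, one rounds the corner at $\partial\Omega\cap\partial B$ to get a $C^{1,1}$ strictly star-shaped $\tilde\Omega$ with $\{\tilde u\neq 0\}\subset\tilde\Omega\subset\Omega\cap B$. You instead extend the radial graph $r(\theta)$ \emph{outward} beyond the directions hitting $\partial\Omega\cap\overline{B'}$, so $\widetilde\Omega$ may contain points outside $\Omega$. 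This works, but it requires one observation you did not make explicit and that the paper's construction sidesteps: hypothesis (c) only gives boundedness of $(-\Delta)^s u$ \emph{inside} $\Omega$, and indeed $(-\Delta)^s u$ typically blows up as $x\to\partial\Omega$ from the exterior. So to conclude $(-\Delta)^s u_i$ is bounded on $\widetilde\Omega$ via the Leibniz formula, you need to know that $\widetilde\Omega\setminus\Omega$ stays a positive distance from ${\rm supp}\,u_i$. With your convention $\widetilde r = r$ on directions that hit $\partial\Omega\cap\overline{B'}$, and using that $\overline{B'}$ is convex with $z_0\in B'$, one has $z_0+r(\theta)\theta\notin\overline{B'}\Rightarrow z_0+t\theta\notin\overline{B'}$ for all $t\ge r(\theta)$, so $\widetilde\Omega\setminus\Omega\subset\R^n\setminus\overline{B'}$, which is uniformly far from ${\rm supp}\,u_i\Subset B'$. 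This closes the gap, but it is extra bookkeeping that the paper's inward construction avoids automatically. Also note that your claim $\partial\widetilde\Omega\cap B'=\partial\Omega\cap B'$ implicitly requires $\widetilde r$ large enough on the remaining directions to push $\partial\widetilde\Omega$ out of $B'$; this too must be built into the interpolation of $\widetilde r$, which you flag as the ``main obstacle'' without fully carrying out. The paper is equally informal about the rounding; neither construction is spelled out in full analytic detail.
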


\begin{proof} We will show that given $\eta\in C^\infty_c(B)$ and letting $\tilde u=u\eta$ it holds
\begin{equation}\label{y}
\int_B(x\cdot \nabla \tilde u)(-\Delta)^{s}\tilde u\,dx=\frac{2s-n}{2}\int_B\tilde u(-\Delta)^s\tilde u\,dx-\Gamma(1+s)^2\int_{\partial\Omega\cap B}\left(\frac{\tilde u}{\delta^{s}}\right)^2(x\cdot\nu) d\sigma.
\end{equation}
From this, the lemma follows by applying \eqref{y} with $\tilde u$ replaced by $(\eta_1+\eta_2)u$ and by $(\eta_1-\eta_2)u$, and subtracting both identities.

We next prove \eqref{y}.
For it, we will apply the result for strictly star-shaped domains, already proven in Section \ref{sec2}.
Observe that there is a $C^{1,1}$ domain $\tilde \Omega$ satisfying
\[ \{\tilde u>0\}\subset \tilde\Omega\subset \Omega\cap B\quad \mbox{and}\quad(x-z_0)\cdot\nu(x)>0 \quad
\mbox{for all }x\in \partial \tilde \Omega.\]
This is because, by the assumptions, $\Omega\cap B$ is a Lipschitz polar graph about the point $z_0\in \Omega\cap B$ and
${\rm supp}\,\tilde u\subset B'\subset\subset B$ for some smaller ball $B'$; see Figure \ref{figura2}.
Hence, there is room enough to round the corner that $\Omega \cap B$ has on $\partial \Omega\cap \partial B$.

\begin{figure}
\begin{center}
\includegraphics[]{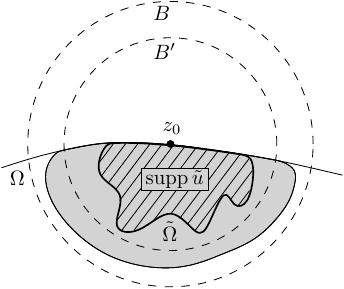}
\end{center}
\caption{\label{figura2} }
\end{figure}

Hence, it only remains to prove that $\tilde u$ satisfies the hypotheses of Proposition \ref{intparts}.
Indeed, since $u$ satisfies (a) and $\eta$ is $C^\infty_c(B')$ then $\tilde u$ satisfies
\[ [\tilde u]_{C^\beta(\{x\in \tilde \Omega\,:\, \tilde \delta(x) >\rho\})} \le C\rho^{s-\beta}\]
for all $\beta \in [s,1+2s)$, where $\tilde\delta(x)= {\rm dist}(x,\partial\tilde\Omega)$.

On the other hand, since $u$ satisfies (b) and we have $\eta \delta^s/{\tilde\delta}^s$ is Lipschitz in ${\rm supp}\, \tilde u$ ---because ${\rm dist}(x,\partial\tilde\Omega\setminus \partial\Omega)\ge c>0$ for all $x\in{\rm supp}\,\tilde u$---, then we find
\[ \bigl[\tilde u/{\tilde \delta}^s\bigr]_{C^\beta(\{x\in \tilde \Omega\,:\, \tilde \delta(x) >\rho\})} \le C\rho^{\alpha-\beta}\]
for all $\beta\in [\alpha,s+\alpha]$.

Let us see now that $\tilde u$ satisfies (c), i.e., that $(-\Delta)^s \tilde u$ is bounded. For it, we use
\[(-\Delta)^s(u\eta) = \eta(-\Delta)^s u + u(-\Delta)^s \eta - I_s(u,\eta)\]
where $I_s$ is given by \eqref{Is}, i.e.,
\[I_s(u,\eta)(x) = c_{n,s}\int_{\R^n}\frac{(u(x)-u(y))(\eta(x)-\eta(y))}{|x-y|^{n+2s}}\,dy\,.\]
The first term is bounded since $(-\Delta)^s u$ so is by hypothesis.
The second term is bounded since $\eta\in C^\infty_c(\R^n)$. The third term is bounded because $u\in C^s(\R^n)$ and $\eta\in {\rm Lip}(\R^n)$.

Therefore, $\tilde u$ satisfies the hypotheses of Proposition \ref{intparts} with $\Omega$ replaced by $\tilde\Omega$, and \eqref{y} follows taking into account that for all $x_0\in \partial \tilde\Omega\cap {\rm supp}\, \tilde u =  \partial \Omega\cap {\rm supp}\, \tilde u$ we have
\[ \lim_{x\to x_0,\, x\in\tilde\Omega} \frac{\tilde u (x)}{{\tilde \delta}^s(x)}=\lim_{x\to x_0,\, x\in\Omega} \frac{\tilde u (x)}{\delta^s(x)}.\]
\end{proof}

We now give the

\begin{proof}[Proof of Proposition \ref{intparts}] Let $B_1,...,B_m$ be balls of radius $r>0$ covering $\overline\Omega$. By regularity of the domain, if $r$ is small enough, for each
$i,j$ such that $\overline{B_i}\cap\overline{B_j}\neq\varnothing$ there exists a ball $B$ containing $B_i\cup B_j$ and a point $z_0\in \Omega\cap B$ such that
\[(x-z_0)\cdot\nu(x)>0\qquad\mbox{for all}\ x\in\partial\Omega\cap B.\]

Let $\{\psi_k\}_{k=1,...,m}$ be a partition of the unity subordinated to $B_1,...,B_m$, that is, a set of smooth functions $\psi_1,...,\psi_m$ such that $\psi_1+\cdots+\psi_m=1$ in $\Omega$ and that $\psi_k$ has compact support in $B_k$ for each $k=1,...,m$. Define $u_k=u\psi_k$.

Now, for each $i,j\in\{1,...,m\}$, if $\overline{B_i}\cap\overline{B_j}=\varnothing$ we use Lemma \ref{duesboles}, while if $\overline{B_i}\cap\overline{B_j}\neq\varnothing$ we use Lemma
\ref{unabola}. We obtain
\[\begin{split}\int_\Omega(x\cdot\nabla u_i)&(-\Delta)^su_j\, dx+\int_\Omega(x\cdot\nabla u_j)(-\Delta)^su_i\,dx=\frac{2s-n}{2}\int_{\Omega}u_i(-\Delta)^su_j\, dx+\\
&+\frac{2s-n}{2}\int_{\Omega}u_j(-\Delta)^su_i\, dx
-\Gamma(1+s)^2\int_{\partial\Omega}\frac{u_i}{\delta^{s}}\frac{u_j}{\delta^{s}}(x\cdot\nu)\, d\sigma\end{split}\]
for each $1\leq i\leq m$ and $1\leq j\leq m$. Therefore, adding these identities for $i=1,...,m$ and for $j=1,...,m$ and taking into account that $u_1+\cdots+u_m=u$, we find
\[\int_\Omega(x\cdot\nabla u)(-\Delta)^su\, dx=\frac{2s-n}{2}\int_{\Omega}u(-\Delta)^su\, dx-\frac{\Gamma(1+s)^2}{2}\int_{\partial\Omega}\left(\frac{u}{\delta^s}\right)^2(x\cdot\nu)\,d\sigma,\]
and the proposition is proved.
\end{proof}

To end this section we prove Theorem \ref{thpoh}, Proposition \ref{proppoh}, Theorem \ref{corintparts}, and Corollaries \ref{cornonexistence}, \ref{cornonexistence2}, and \ref{cornonexistence3}.

\begin{proof}[Proof of Proposition \ref{proppoh} and Theorem \ref{thpoh}] By Theorem \ref{krylov}, any solution $u$ to problem \eqref{eqlin} satisfies the hypothesis of Proposition
\ref{intparts}. Hence, using this proposition and that $(-\Delta)^su=f(x,u)$, we obtain
\[\int_{\Omega}(\nabla u\cdot x)f(x,u)dx=\frac{2s-n}{2}\int_\Omega uf(x,u)dx+\frac{\Gamma(1+s)^2}{2}\int_{\partial\Omega}\left(\frac{u}{\delta^s}\right)^2(x\cdot\nu) d\sigma.\]

On the other hand, note that $(\nabla u\cdot x)f(x,u)=\nabla \left(F(x,u)\right)\cdot x-x\cdot F_x(x,u)$. Then, integrating by parts,
\[\int_{\Omega}(\nabla u\cdot x)f(x,u)dx=-n\int_\Omega F(x,u)dx-\int_\Omega x\cdot F_x(x,u)dx.\]
If $f$ does not depend on $x$, then the last term do not appear, as in Theorem \ref{thpoh}.
\end{proof}

\begin{proof}[Proof of Theorem \ref{corintparts}] As shown in the final part of the proof of Proposition \ref{intparts} for strictly star-shaped domains given in Section \ref{sec2}, the freedom for choosing the origin
in the identity from this proposition leads to
\[\int_\Omega w_{x_i}(-\Delta)^sw\ dx=-\frac{\Gamma(1+s)^2}{2}\int_{\partial\Omega}\left(\frac{w}{\delta^s}\right)^2\nu_i\ d\sigma\]
for each $i=1,...,n$.
Then, the theorem follows by using this identity with $w=u+v$ and with $w=u-v$ and subtracting both identities.
\end{proof}

\begin{proof}[Proof of Corollaries \ref{cornonexistence}, \ref{cornonexistence2}, and \ref{cornonexistence3}] We only have to prove Corollary \ref{cornonexistence3}, since Corollaries \ref{cornonexistence} and \ref{cornonexistence2} follow immediately from it by setting $f(x,u)=f(u)$ and $f(x,u)=|u|^{p-1}u$ respectively.

By hypothesis \eqref{supercritic2}, we have
\[\frac{n-2s}{2}\int_\Omega uf(x,u)dx\geq n\int_\Omega F(x,u)dx+\int_\Omega x\cdot F_x(x,u)dx.\]
This, combined with Proposition \ref{proppoh} gives
\[\int_{\partial\Omega}\left(\frac{u}{\delta^s}\right)^2(x\cdot \nu)d\sigma\leq 0.\]
If $\Omega$ is star-shaped and inequality in \eqref{supercritic2} is strict, we obtain a contradiction.
On the other hand, if inequality in \eqref{supercritic2} is not strict but $u$ is a positive solution of \eqref{eqlin}, then by the Hopf Lemma for the fractional Laplacian (see, for instance, \cite{CRS} or Lemma 3.2 in \cite{RS}) the function $u/\delta^s$ is strictly positive in $\overline\Omega$, and we also obtain a contradiction.
\end{proof}

\appendix
\section{Calculation of the constants $c_1$ and $c_2$}

In Proposition \ref{prop:Lap-s/2-delta-s} we have obtained the following expressions for the constants $c_1$ and $c_2$:
\[c_1=c_{1,\frac s2},\qquad{\rm and}\qquad c_2=\int_0^{\infty}\left\{\frac{1-x^s}{|1-x|^{1+s}}+\frac{1+x^s}{|1+x|^{1+s}}\right\}dx,\]
where $c_{n,s}$ is the constant appearing in the singular integral expression for $(-\Delta)^{s}$ in dimension $n$.

Here we prove that the values of these constants coincide with the ones given in Proposition \ref{thlaps/2}. We start by calculating $c_1$.

\begin{prop} Let $c_{n,s}$ be the normalizing constant of $(-\Delta)^{s}$ in dimension $n$. Then,
\[c_{1,\frac s2}=\frac{\Gamma(1+s)\sin\left(\frac{\pi s}{2}\right)}{\pi}.\]
\end{prop}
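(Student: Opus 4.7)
The plan is to rely on the known explicit formula for the normalising constant of the fractional Laplacian and reduce the identity to a purely algebraic manipulation involving the Gamma function. Recall that $c_{n,s}$ is characterised by matching the singular-integral representation \eqref{laps} with the Fourier multiplier $|\xi|^{2s}$. Computing the Fourier transform of the radial kernel $|x|^{-n-2s}$ (e.g.\ via a Riesz-potential computation) yields the well-known formula
\[
c_{n,s}=\frac{2^{2s}\,s\,\Gamma\!\left(\tfrac{n+2s}{2}\right)}{\pi^{n/2}\,\Gamma(1-s)}.
\]
Specialising to $n=1$ and replacing $s$ by $s/2$ gives
\[
c_{1,\frac s2}=\frac{2^{s}\,\tfrac{s}{2}\,\Gamma\!\left(\tfrac{1+s}{2}\right)}{\sqrt{\pi}\,\Gamma\!\left(1-\tfrac{s}{2}\right)}.
\]
So the task reduces to showing this expression equals $\Gamma(1+s)\sin(\pi s/2)/\pi$.

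To handle the right-hand side I would invoke two classical identities. First, the Legendre duplication formula $\Gamma(2z)=\tfrac{2^{2z-1}}{\sqrt{\pi}}\Gamma(z)\Gamma(z+\tfrac12)$ with $z=(1+s)/2$ yields
\[
\Gamma(1+s)=\frac{2^{s}}{\sqrt{\pi}}\,\Gamma\!\left(\tfrac{1+s}{2}\right)\Gamma\!\left(1+\tfrac{s}{2}\right).
\]
Second, Euler's reflection formula $\Gamma(z)\Gamma(1-z)=\pi/\sin(\pi z)$ with $z=s/2$, combined with $\Gamma(1+s/2)=(s/2)\Gamma(s/2)$, gives
\[
\Gamma\!\left(1+\tfrac{s}{2}\right)\Gamma\!\left(1-\tfrac{s}{2}\right)=\frac{(s/2)\,\pi}{\sin(\pi s/2)},
\]
so that $\Gamma(1+s/2)=\dfrac{(s/2)\,\pi}{\Gamma(1-s/2)\sin(\pi s/2)}$.

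Substituting this last expression into the duplication identity and solving for $\Gamma(1+s)\sin(\pi s/2)/\pi$ produces precisely
\[
\frac{\Gamma(1+s)\sin(\pi s/2)}{\pi}=\frac{2^{s}\,\tfrac{s}{2}\,\Gamma\!\left(\tfrac{1+s}{2}\right)}{\sqrt{\pi}\,\Gamma\!\left(1-\tfrac{s}{2}\right)},
\]
which matches the displayed value of $c_{1,s/2}$, completing the proof. The only conceptual step is recalling (or briefly deriving) the explicit formula for $c_{n,s}$; once that is in hand, everything else is a routine application of duplication and reflection for $\Gamma$, so I expect no real obstacle beyond bookkeeping.
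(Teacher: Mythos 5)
Your proposal is correct and follows essentially the same route as the paper: both start from the explicit formula $c_{n,s}=\frac{s\,2^{2s}\Gamma\left(\frac{n+2s}{2}\right)}{\pi^{n/2}\Gamma(1-s)}$, specialize to $n=1$ and $s\mapsto s/2$, and then reduce the identity to the Legendre duplication formula and Euler's reflection formula for $\Gamma$ (the paper applies duplication with $z=s/2$ after multiplying top and bottom by $\Gamma(s/2)$, while you apply it with $z=(1+s)/2$, but this is only a cosmetic rearrangement of the same three facts).
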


\begin{proof} Recall that
\begin{equation}\label{cns}
c_{n,s}=\frac{s2^{2s}\Gamma\left(\frac{n+2s}{2}\right)}{\pi^{n/2}\Gamma(1-s)}.
\end{equation}
Thus,
\[c_{1,\frac{s}{2}}=\frac{s2^{s-1}\Gamma\left(\frac{1+s}{2}\right)}{\sqrt{\pi}\Gamma\left(1-\frac{s}{2}\right)}.\]
Now, using the properties of the Gamma function (see for example \cite{AAR})
\[\Gamma(z)\Gamma\left(z+\frac 12\right)=2^{1-2z}\sqrt\pi\Gamma(2z)\qquad \textrm{and}\qquad\Gamma(z)\Gamma(1-z)=\frac{\pi}{\sin(\pi z)},\]
we obtain
\[ c_{1,\frac{s}{2}}=\frac{s2^{s-1}}{\sqrt{\pi}}\cdot\frac{\Gamma\left(\frac{1+s}{2}\right)\Gamma\left(\frac s2\right)}{\Gamma\left(1-\frac{s}{2}\right)\Gamma\left(\frac s2\right)}=\frac{s 2^{s-1}}{\sqrt{\pi}}\cdot\frac{2^{1-s}\sqrt\pi \Gamma(s)}{\pi/\sin\left(\frac{\pi s}{2}\right)}=\frac{s\Gamma(s)\sin\left(\frac{\pi s}{2}\right)}{\pi}.\]
The result follows by using that $z\Gamma(z)=\Gamma(1+z)$.
\end{proof}

Let us now compute the constant $c_2$.

\begin{prop}\label{constc2} Let $0<s<1$. Then,
\[\int_0^\infty \left\{\frac{1-x^s}{|1-x|^{1+s}}+\frac{1+x^s}{|1+x|^{1+s}}\right\}dx=\frac{\pi}{\tan\left(\frac{\pi s}{2}\right)}.\]
\end{prop}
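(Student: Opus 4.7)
The plan is to evaluate $c_2$ by introducing a Mellin-type regularization and reducing to Beta-function integrals. First I would check that the integrand
\[F(x):=\frac{1-x^s}{|1-x|^{1+s}}+\frac{1+x^s}{(1+x)^{1+s}}\]
is integrable on $(0,\infty)$: each summand has an integrable singularity $|1-x|^{-s}$ at $x=1$, and as $x\to\infty$ the apparent $\mp 1/x$ tails of the two summands cancel so that $F(x)=O(x^{-1-s})$. Thus $c_2=\int_0^\infty F(x)\,dx$ is well defined but neither of the two summands is individually integrable at infinity, forcing us to work with the combined integral.

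Next I would introduce, for $0<a<1$, the auxiliary parametric family
\[g(a):=\int_0^\infty x^{a-1}F(x)\,dx\]
and split it as $g(a)=g_1(a)+g_2(a)+g_3(a)$, where
\[g_1(a)=\int_0^1\frac{x^{a-1}(1-x^s)}{(1-x)^{1+s}}dx,\quad g_2(a)=\int_1^\infty\frac{x^{a-1}(1-x^s)}{(x-1)^{1+s}}dx,\]
\[g_3(a)=\int_0^\infty\frac{x^{a-1}(1+x^s)}{(1+x)^{1+s}}dx.\]
For $0<a<1$ each piece converges absolutely and reduces to Gamma-function expressions via $\int_0^\infty x^{b-1}(1+x)^{-c}dx=\Gamma(b)\Gamma(c-b)/\Gamma(c)$ (directly for $g_3$, via the substitution $y=x-1$ for $g_2$, and via a single integration by parts for $g_1$). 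Explicitly,
\[g_1(a)=\Gamma(-s)\left[\frac{\Gamma(a)}{\Gamma(a-s)}-\frac{\Gamma(a+s)}{\Gamma(a)}\right],\quad g_2(a)=\Gamma(-s)\left[\frac{\Gamma(1+s-a)}{\Gamma(1-a)}-\frac{\Gamma(1-a)}{\Gamma(1-a-s)}\right],\]
\[g_3(a)=\frac{\Gamma(a)\Gamma(1+s-a)+\Gamma(a+s)\Gamma(1-a)}{\Gamma(1+s)}.\]

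The heart of the argument is the passage to $a=1$, where $F$ itself was integrated. Setting $\epsilon=1-a$, the factor $\Gamma(1-a)=\Gamma(\epsilon)=1/\epsilon-\gamma+O(\epsilon)$ produces a $1/\epsilon$ pole in both $g_2$ and $g_3$; a careful Laurent expansion of the Gamma quotients shows these poles cancel, with $g_2(a)=-1/\epsilon+\psi(-s)+\gamma+O(\epsilon)$ and $g_3(a)=1/\epsilon+1/s-\gamma-\psi(1+s)+O(\epsilon)$, while $g_1(1)=-1/s+\pi/\sin(\pi s)$. Summing yields the finite value
\[c_2=g(1)=\frac{\pi}{\sin(\pi s)}+\psi(-s)-\psi(1+s),\]
where $\psi=\Gamma'/\Gamma$ is the digamma function and $\gamma$ is the Euler--Mascheroni constant.

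Finally I simplify: the recurrence $\psi(z+1)=\psi(z)+1/z$ gives $\psi(-s)-\psi(1+s)=\psi(1-s)-\psi(s)$, and logarithmic differentiation of the reflection formula $\Gamma(s)\Gamma(1-s)=\pi/\sin(\pi s)$ yields $\psi(1-s)-\psi(s)=\pi\cot(\pi s)$. Hence
\[c_2=\frac{\pi}{\sin(\pi s)}+\pi\cot(\pi s)=\frac{\pi(1+\cos(\pi s))}{\sin(\pi s)}=\frac{\pi}{\tan(\pi s/2)}\]
by the half-angle identities $1+\cos\theta=2\cos^2(\theta/2)$ and $\sin\theta=2\sin(\theta/2)\cos(\theta/2)$. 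The main obstacle is the Laurent-expansion step at $a=1$: one must track the $1/\epsilon$ singular parts and the $O(1)$ corrections precisely across $g_2$ and $g_3$ to confirm the poles cancel and to identify the correct digamma values; everything else reduces to standard Beta/Gamma calculus.
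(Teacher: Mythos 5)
Your computation is correct, but it follows a genuinely different route from the paper. The paper finds explicit antiderivatives of $\frac{z^s}{(1-z)^{1+s}}$ in terms of the hypergeometric function $\,_2F_1(1+s,1+s;2+s;\cdot)$, evaluates $I_1=\int_0^1$ and $I_2=\int_1^\infty$ separately, and extracts the boundary behavior at $x=1$ and $x=\infty$ using the Euler and Pfaff transformations, Gauss's evaluation of $\,_2F_1$ at $1$, and a Ramanujan-type logarithmic asymptotic for $\,_2F_1$ near $1$ (Lemma A.3). You instead introduce the Mellin regularization $g(a)=\int_0^\infty x^{a-1}F(x)\,dx$, reduce each of $g_1,g_2,g_3$ to Beta-function evaluations $\Gamma(-s)\bigl[\Gamma(a)/\Gamma(a-s)-\Gamma(a+s)/\Gamma(a)\bigr]$, etc., track the simple poles of $\Gamma(1-a)$ as $a\to 1^-$, show they cancel in the sum, and finish with the digamma reflection formula $\psi(1-s)-\psi(s)=\pi\cot(\pi s)$. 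I verified all three closed forms, the Laurent expansions at $\epsilon=1-a$, the pole cancellation, the limit $g(a)\to c_2$ via dominated convergence, and the final trigonometric simplification; everything checks out. Your route replaces the hypergeometric machinery (and in particular the logarithmic asymptotic of $\,_2F_1$ at $1$) with standard Gamma/Beta/digamma calculus, which is arguably more elementary; the cost is the careful Laurent bookkeeping at $a=1$.

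One small point of rigor worth flagging: the formula $g_1(a)=\Gamma(-s)\bigl[\Gamma(a)/\Gamma(a-s)-\Gamma(a+s)/\Gamma(a)\bigr]$ is correct, but the phrase ``via a single integration by parts'' glosses over the fact that the naive IBP with $v=\frac{1}{s}(1-x)^{-s}$ produces a divergent boundary term at $x=0$ when $a<1$. The clean justification is analytic continuation in the second Beta parameter: for $\Re b>0$ one has $\int_0^1 x^{a-1}(1-x^s)(1-x)^{b-1}dx=\Gamma(b)\bigl[\Gamma(a)/\Gamma(a+b)-\Gamma(a+s)/\Gamma(a+s+b)\bigr]$; the left side extends analytically to $\Re b>-1$ because $1-x^s$ vanishes to first order at $x=1$, and the right side has a removable singularity at $b=0$ (the bracket vanishes there), so the identity persists at $b=-s$. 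The same remark applies to $g_2$ after the substitution $u=1/x$, which in fact gives $g_2(a)=-g_1(1-a)$ directly and avoids the additional manipulation needed for the $y=x-1$ route.
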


For it, we will need some properties of the hypergeometric function $\,_2F_1$, which we prove in the next lemma.
Recall that this function is defined as
\[\,_2F_1(a,b;c;z)=\sum_{n\geq0}\frac{(a)_n(b)_n}{(c)_n}\frac{z^n}{n!}\qquad\textrm{for }\ |z|<1,\]
where $(a)_n=a(a+1)\cdots(a+n-1)$, and by analytic continuation in the whole complex plane.

\begin{lem}\label{hypergeom} Let $\,_2F_1(a,b;c;z)$ be the ordinary hypergeometric function, and $s\in\mathbb R$. Then,
\begin{itemize}
\item[(i)] For all $z\in\mathbb C$, \[\frac{d}{dz}\left\{\frac{z^{s+1}}{s+1}\,_2F_1(1+s,1+s;2+s;z)\right\}=\frac{z^s}{(1-z)^{1+s}}.\]
\item[(ii)] If $s\in(0,1)$, then \[\lim_{x\rightarrow 1}\left\{\frac{1}{s+1}\,_2F_1(1+s,1+s;2+s;x)-\frac{1}{s(1-x)^s}\right\}=-\frac{\pi}{\sin(\pi s)}.\]
\item[(iii)] If $s\in(0,1)$, then \[\lim_{x\rightarrow +\infty}\left\{\frac{(-x)^{s+1}}{s+1}
    \,_2F_1(1\hspace{-0.5mm}+\hspace{-0.5mm}s,1\hspace{-0.5mm}+\hspace{-0.5mm}s;
    2\hspace{-0.5mm}+\hspace{-0.5mm}s;x)
    -\frac{x^{s+1}}{s+1}
    \,_2F_1(1\hspace{-0.5mm}+\hspace{-0.5mm}s,1\hspace{-0.5mm}+\hspace{-0.5mm}s;2\hspace{-0.5mm}+\hspace{-0.5mm}s;
    -x)\right\}=i \pi,\]
where the limit is taken on the real line.
\end{itemize}
\end{lem}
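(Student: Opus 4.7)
Part (i) will follow from a direct power-series manipulation. Using the binomial expansion $(1-z)^{-(1+s)}=\sum_{n\geq 0}\frac{(1+s)_n}{n!}z^n$, the integrand becomes $\frac{z^s}{(1-z)^{1+s}}=\sum_{n\geq 0}\frac{(1+s)_n}{n!}z^{n+s}$, and integrating termwise from $0$ yields $\sum_{n\geq 0}\frac{(1+s)_n}{n!(n+s+1)}z^{n+s+1}$. The Pochhammer identity $\frac{s+1}{n+s+1}=\frac{(1+s)_n}{(2+s)_n}$, immediate from $(a)_{n+1}=a\,(a+1)_n$, rewrites this sum as $\frac{z^{s+1}}{s+1}\sum_{n\geq 0}\frac{(1+s)_n(1+s)_n}{(2+s)_n\,n!}z^n=\frac{z^{s+1}}{s+1}\,_2F_1(1+s,1+s;2+s;z)$. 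Differentiating in $z$ gives (i).

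For (ii), I invoke the classical Gauss--Kummer connection formula at $z=1$ (valid because $c-a-b=-s\notin\mathbb Z_{\geq 0}$ for $s\in(0,1)$) with $a=b=1+s$, $c=2+s$. The first right-hand hypergeometric collapses via Euler's reduction $\,_2F_1(a,b;b;w)=(1-w)^{-a}$ to $z^{-(1+s)}$, and the Gamma-function coefficients simplify through $\Gamma(1+s)\Gamma(1-s)=\pi s/\sin(\pi s)$ to give
\[\frac{1}{s+1}\,_2F_1(1+s,1+s;2+s;x)=-\frac{\pi\,x^{-(1+s)}}{\sin(\pi s)}+\frac{1}{s(1-x)^s}\,_2F_1(1,1;1-s;1-x).\]
Subtracting $\frac{1}{s(1-x)^s}$ and letting $x\to 1$: the first summand tends to $-\pi/\sin(\pi s)$, while the second equals $\frac{\,_2F_1(1,1;1-s;1-x)-1}{s(1-x)^s}$, which is $O((1-x)^{1-s})\to 0$ since $\,_2F_1(1,1;1-s;0)=1$ and the Taylor series in $1-x$ starts at order $1$.

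Part (iii) is the most delicate. I plan to use the integral representation from (i). Set $\Phi(z)=\int_0^z \frac{t^s\,dt}{(1-t)^{1+s}}$, analytic on $\mathbb C\setminus((-\infty,0]\cup[1,\infty))$ with principal branches. With the convention $(-x)^{s+1}=x^{s+1}e^{i\pi(s+1)}$ and upper boundary values of $\,_2F_1$ on the cut $[1,\infty)$, the two terms in (iii) become $e^{i\pi(s+1)}\Phi(x+i0^+)$ and $e^{-i\pi(s+1)}\Phi(-x+i0^+)$; substituting $t=-u$ on a path in the upper half plane yields the explicit formula $\Phi(-x+i0^+)=-e^{i\pi s}\int_0^x \frac{u^s\,du}{(1+u)^{1+s}}$. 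The central step is to evaluate the difference $\Phi(x+i0^+)-\Phi(-x+i0^+)$ by closing the contour with a large semicircle in the upper half plane: parameterizing $t=xe^{i\theta}$ with $\theta\in(0,\pi)$, one has $(1-t)^{1+s}\sim(-t)^{1+s}=x^{1+s}e^{i(\theta-\pi)(1+s)}$, so
\[\frac{t^s}{(1-t)^{1+s}}\,dt=ie^{i\pi(1+s)}\,d\theta+O(1/x),\]
and the semicircle contributes $i\pi e^{i\pi(1+s)}+o(1)$ as $x\to\infty$. Combining with the phase factors $e^{\pm i\pi(s+1)}$ and verifying that the logarithmically divergent parts of $\Phi(\pm x+i0^+)$ cancel in the specific linear combination asserted in (iii) produces the finite limit $i\pi$. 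The main obstacle is precisely this branch-tracking and log-cancellation step: one must confirm that the $\log x$ asymptotics of the two boundary values combine with the phases to leave only the topological semicircle contribution.
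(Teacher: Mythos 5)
Your part (i) is the same power-series manipulation as the paper, just run in the opposite direction (you integrate the target and match; the paper differentiates the series and matches), and is correct. Your part (ii) is a genuinely different and arguably cleaner route: you invoke the Gauss connection formula at $z=1$ and need only the leading Taylor coefficient of $\,_2F_1(1,1;1-s;1-x)$, whereas the paper uses the Euler transformation and an explicit L'H\^opital computation. Both are valid; yours avoids the limit computation altogether. Your part (iii) is also a genuinely different route from the paper --- the paper uses Ramanujan's logarithmic asymptotic of $\,_2F_1(a,b;a+b;\cdot)$ near $1$ (the formula \eqref{ramanujan} from Berndt) combined with the Pfaff transformation to read off the behavior at infinity, while you propose contour integration of the integral representation from (i). The idea is reasonable, but your branch conventions as written do not produce the claimed finite limit.

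Concretely: writing $\Phi(z)=\frac{z^{s+1}}{s+1}\,_2F_1(1+s,1+s;2+s;z)$ so that $\Phi'=z^s(1-z)^{-1-s}$ on $\mathbb C\setminus\big((-\infty,0]\cup[1,\infty)\big)$, one computes for large $x$ that
\[
\Phi(x\pm i0^+)=e^{\pm i\pi(1+s)}\log x+C_\pm+o(1),\qquad
\Phi(-x+ i0^+)=-e^{i\pi s}\bigl(\log x+J_\infty\bigr)+o(1),
\]
since the boundary value of $(1-t)^{-1-s}$ on $[1,\infty)$ from above is $e^{i\pi(1+s)}(t-1)^{-1-s}$, and the boundary value on $(-\infty,0]$ is obtained by the substitution you describe. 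You take both the prefactor $(-x)^{s+1}=x^{s+1}e^{i\pi(s+1)}$ \emph{and} the boundary value $\,_2F_1(\dots;x+i0^+)$ from the upper half plane. With those two ``upper'' choices your target expression is
\[
e^{i\pi(s+1)}\Phi(x+i0^+)-e^{-i\pi(s+1)}\Phi(-x+i0^+),
\]
whose $\log x$ coefficient is $e^{i\pi(s+1)}\cdot e^{i\pi(1+s)}-e^{-i\pi(s+1)}\cdot(-e^{i\pi s})=e^{2\pi i s}-1\neq 0$ for $s\in(0,1)$. So the quantity diverges, and no amount of semicircle bookkeeping will yield $i\pi$. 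The semicircle contribution $i\pi e^{i\pi(1+s)}$ that you derive is for $\Phi(-x+i0^+)-\Phi(x+i0^+)$ (both upper), which \emph{does} converge --- but that is not the linear combination appearing in (iii). The fix is to take the two boundary choices from opposite sides, e.g.\ $(-x)^{s+1}=x^{s+1}e^{i\pi(s+1)}$ together with $\,_2F_1(\dots;x-i0^+)$, so that the $\log x$ coefficient becomes $e^{i\pi(s+1)}e^{-i\pi(1+s)}-e^{-i\pi(s+1)}(-e^{i\pi s})=1-1=0$. With that consistent pairing the contour argument would have to be redone (the relevant path from $x-i0^+$ to $-x+i0^+$ in the doubly-slit plane is not a semicircle in the upper half plane; it must pass through the gap $(0,1)$), and the ``topological'' contribution must be recomputed on the correct path. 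So the gap you flagged is real, and as written the argument does not close.

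For comparison, the paper sidesteps the contour bookkeeping entirely: it uses $\frac{(1-x)^{1+s}}{1+s}\,_2F_1(1+s,1+s;2+s;x)=\frac{1}{1+s}\,_2F_1\!\left(1+s,1;2+s;\frac{x}{x-1}\right)=\log\frac{1}{1-x}+R+o(1)$ (Pfaff plus \eqref{ramanujan}, noting $\frac{x}{x-1}\to1$), replaces $(1-x)^{1+s}$ by $(-x)^{1+s}$ up to $o(1)$, does the same for the argument $-x$, subtracts, and reads off $\lim\log\frac{1+x}{1-x}=i\pi$. The branch bookkeeping there is absorbed into the single logarithm of a negative ratio.
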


\begin{proof}
(i) Let us prove the equality for $|z|<1$. In this case,
\[\begin{split} \frac{d}{dz}\biggl\{\frac{z^{s+1}}{s+1}\,_2F_1&(1+s,1+s;2+s;z)\biggr\} =\frac{d}{dz}\sum_{n\geq0}\frac{(1+s)^2_n}{(2+s)_n}\frac{z^{n+1+s}}{n!(s+1)}=\\
&= \sum_{n\geq0}\frac{(1+s)_n}{n!}z^{n+s}= z^s\sum_{n\geq0}{-1-s \choose n}(-z)^n= z^s(1-z)^{-1-s},
\end{split}\]
where we have used that $(2+s)_n=\frac{n+1+s}{1+s}(1+s)_n$ and that $\frac{(a)_n}{n!}=(-1)^n{-a \choose n}$.
Thus, by analytic continuation the identity holds in $\mathbb C$.

(ii) Recall the Euler transformation (see for example \cite{AAR})
\begin{equation}\label{euler}\,_2F_1(a,b;c;x)=(1-x)^{c-a-b}\,_2F_1(c-a,c-b;c;x),\end{equation}
and the value at $x=1$
\begin{equation}\label{2F1at1}\,_2F_1(a,b;c;1)=\frac{\Gamma(c)\Gamma(c-a-b)}{\Gamma(c-a)\Gamma(c-b)}\qquad \textrm{whenever}\qquad a+b<c.\end{equation}
Hence,
\[\frac{1}{s+1}\,_2F_1(1+s,1+s;2+s;x)-\frac{1}{s(1-x)^s}=\frac{\frac{1}{s+1}\,_2F_1(1,1;2+s;x)-\frac1s}{(1-x)^s},\]
and we can use l'H\^opital's rule,
\begin{eqnarray*} \lim_{x\rightarrow 1}\frac{\frac{1}{s+1}\,_2F_1(1,1;2+s;x)-\frac1s}{(1-x)^s}&=&
\lim_{x\rightarrow 1}\frac{\frac{1}{s+1}\frac{d}{dx}\,_2F_1(1,1;2+s;x)}{-s(1-x)^{s-1}}\\
&=&-\lim_{x\rightarrow 1}\frac{(1-x)^{1-s}}{s(s+1)(s+2)}\,_2F_1(2,2;3+s;x)\\
&=&-\lim_{x\rightarrow 1}\frac{1}{s(s+1)(s+2)}\,_2F_1(1+s,1+s;3+s;x)\\
&=&-\frac{1}{s(s+1)(s+2)}\,_2F_1(1+s,1+s;3+s;1)\\
&=&-\frac{1}{s(s+1)(s+2)}\frac{\Gamma(3+s)\Gamma(1-s)}{\Gamma(2)\Gamma(2)}\\
&=&-\Gamma(s)\Gamma(1-s)\\
&=&-\frac{\pi}{\sin(\pi s)}.
\end{eqnarray*}
We have used that
\[\frac{d}{dx}\,_2F_1(1,1;2+s;x)=\frac{1}{s+2}\,_2F_1(2,2;3+s;x),\]
the Euler transformation \eqref{euler}, and the properties of the $\Gamma$ function
\[x\Gamma(x)=\Gamma(x+1),\qquad \Gamma(x)\Gamma(1-x)=\frac{\pi}{\sin(\pi x)}.\]

(iii) In \cite{B} it is proved that
\begin{equation}\label{ramanujan}\frac{\Gamma(a)\Gamma(b)}{\Gamma(a+b)}\,_2F_1(a,b;a+b;x)=
\log\frac{1}{1-x}+R+o(1)\qquad \textrm{for}\ \ x\sim 1,\end{equation}
where
\[R=-\psi(a)-\psi(b)-\gamma,\]
$\psi$ is the digamma function, and $\gamma$ is the Euler-Mascheroni constant.
Using the Pfaff transformation \cite{AAR}
\[\,_2F_1(a,b;c;x)=(1-x)^{-a}\,_2F_1\left(a,c-b;c;\frac{x}{x-1}\right)\]
and \eqref{ramanujan}, we obtain
\begin{eqnarray*} \frac{(1-x)^{1+s}}{1+s}\,_2F_1(1+s,1+s;2+s;x)
&=&\frac{1}{1+s}\,_2F_1\left(1+s,1;2+s;\frac{x}{x-1}\right)\\
&=&\log\frac{1}{1-x}+R+o(1)\ \ \ \textrm{for }\ x\sim \infty.
\end{eqnarray*}
Thus, it also holds
\[\frac{(-x)^{1+s}}{1+s}\,_2F_1(1+s,1+s;2+s;x)=\log\frac{1}{1-x}+R+o(1)\ \ \ \textrm{for }\ x\sim \infty,\]
and therefore the limit to be computed is now
\[\lim_{x\rightarrow +\infty}\left\{\left(\log\frac{1}{1-x}+R\right)-\left(\log\frac{1}{1+x}+R\right)\right\}=i \pi.\]
\end{proof}

Next we give the:

\begin{proof}[Proof of Proposition \ref{constc2}] Let us compute separately the integrals
\[I_1=\int_0^1\left\{\frac{1-x^s}{|1-x|^{1+s}}+\frac{1+x^s}{|1+x|^{1+s}}\right\}dx\]
and
\[I_2=\int_1^\infty \left\{\frac{1-x^s}{|1-x|^{1+s}}+\frac{1+x^s}{|1+x|^{1+s}}\right\}dx.\]
By Lemma \ref{hypergeom} (i), we have that
\[\int\left\{\frac{1-x^s}{(1-x)^{1+s}}+\frac{1+x^s}{(1+x)^{1+s}}\right\}dx=
\frac{1}{s}(1-x)^{-s}-\frac{x^{s+1}}{s+1}\,_2F_1(1+s,1+s;2+s;x)\]
\[-\frac{1}{s}(1+x)^{-s}+\frac{x^{s+1}}{s+1}\,_2F_1(1+s,1+s;2+s;-x).\]
Hence, using \ref{hypergeom} (ii),
\[I_1=\frac{\pi}{\sin(\pi s)}-\frac{1}{s 2^s}+\frac{1}{s+1}\,_2F_1(1+s,1+s;2+s;-1).\]
Let us evaluate now $I_2$.
As before, by Lemma \ref{hypergeom} (i),
\[\int\left\{\frac{1-x^s}{(x-1)^{1+s}}+\frac{1+x^s}{(x+1)^{1+s}}\right\}dx=
\frac{1}{s}(x-1)^{-s}+(-1)^s\frac{x^{s+1}}{s+1}\,_2F_1(1+s,1+s;2+s;x)\]
\[-\frac{1}{s}(1+x)^{-s}+\frac{x^{s+1}}{s+1}\,_2F_1(1+s,1+s;2+s;-x).\]
Hence, using \ref{hypergeom} (ii) and (iii),
\begin{eqnarray*}I_2&=&-i \pi+(-1)^s\frac{\pi}{\sin(\pi s)}+\frac{1}{s 2^s}-\frac{1}{s+1}\,_2F_1(1+s,1+s;2+s;-1)\\
&=&-i\pi+\cos(\pi s)\frac{\pi}{\sin(\pi s)}+i\sin(\pi s)\frac{\pi}{\sin(\pi s)}+\\ &&\qquad\qquad\qquad+\frac{1}{s 2^s}-\frac{1}{s+1}\,_2F_1(1+s,1+s;2+s;-1)\\
&=&\frac{\pi}{\tan(\pi s)}+\frac{1}{s 2^s}-\frac{1}{s+1}\,_2F_1(1+s,1+s;2+s;-1).
\end{eqnarray*}
Finally, adding up the expressions for $I_1$ and $I_2$, we obtain
\begin{eqnarray*}\int_0^\infty \left\{\frac{1-x^s}{|1-x|^{1+s}}+\frac{1+x^s}{|1+x|^{1+s}}\right\}dx&=&\frac{\pi}{\sin(\pi s)}+\frac{\pi}{\tan(\pi s)}=
\pi\cdot\frac{1+\cos(\pi s)}{\sin(\pi s)}\\
&=&\pi\cdot\frac{2\cos^2\left(\frac{\pi s}{2}\right)}{2\sin\left(\frac{\pi s}{2}\right)\cos\left(\frac{\pi s}{2}\right)}=\frac{\pi}{\tan\left(\frac{\pi s}{2}\right)},
\end{eqnarray*}
as desired.
\end{proof}

\begin{rem}\label{A4}
It follows from Proposition \ref{propoperador} that the constant appearing in \eqref{partdificil} (and thus in the Pohozaev identity), $\Gamma(1+s)^2$,  is given by
\[c_3=c_1^2(\pi^2+c_2^2).\]
We have obtained the value of $c_3$  by computing explicitly $c_1$ and $c_2$. However, an alternative way to obtain $c_3$ is to
exhibit an explicit solution of \eqref{eq} for some nonlinearity $f$ and apply
the Pohozaev identity to this solution.
For example, when $\Omega=B_1(0)$, the solution of
\[\left\{ \begin{array}{rcll} (-\Delta)^s u &=&1&\textrm{in }B_1(0) \\
u&=&0&\textrm{in }\mathbb R^n\backslash B_1(0)\end{array}\right.\]
can be computed explicitly \cite{G,BGR}:
\begin{equation}\label{eq:explicit-sol-ball}
u(x)=\frac{2^{-2s}\Gamma(n/2)}{\Gamma\left(\frac{n+2s}{2}\right)\Gamma(1+s)}\left(1-|x|^2\right)^s.
\end{equation}
Thus, from the identity
\begin{equation}\label{jfk}(2s-n)\int_{B_1(0)}u\ dx+2n\int_{B_1(0)}u\ dx=c_3\int_{\partial B_1(0)}\left(\frac{u}{\delta^s}\right)^2(x\cdot\nu)d\sigma\end{equation}
we can obtain the constant $c_3$, as follows.

On the one hand,
\begin{eqnarray*}\int_{B_1(0)}u\ dx&=&\frac{2^{-2s}\Gamma(n/2)}{\Gamma\left(\frac{n+2s}{2}\right)\Gamma(1+s)}\int_{B_1(0)}\left(1-|x|^2\right)^sdx\\
&=&\frac{2^{-2s}\Gamma(n/2)}{\Gamma\left(\frac{n+2s}{2}\right)\Gamma(1+s)}|S^{n-1}|\int_0^1r^{n-1}(1-r^2)^sdr\\
&=&\frac{2^{-2s}\Gamma(n/2)}{\Gamma\left(\frac{n+2s}{2}\right)\Gamma(1+s)}|S^{n-1}|\frac12\int_0^1r^{n/2-1}(1-r)^sdr\\
&=&\frac{2^{-2s}\Gamma(n/2)}{\Gamma\left(\frac{n+2s}{2}\right)\Gamma(1+s)}|S^{n-1}|\frac12\frac{\Gamma(n/2)\Gamma(1+s)}{\Gamma(n/2+1+s)},
\end{eqnarray*}
where we have used the definition of the Beta function
\[B(a,b)=\int_0^1t^{a-1}(1-t)^{b-1}dt\]
and the identity
\[B(a,b)=\frac{\Gamma(a)\Gamma(b)}{\Gamma(a+b)}.\]
On the other hand,
\begin{eqnarray*} \int_{\partial B_1(0)}\left(\frac{u}{\delta^s}\right)^2(x\cdot\nu)d\sigma
&=&\left(\frac{2^{-2s}\Gamma(n/2)}{\Gamma\left(\frac{n+2s}{2}\right)\Gamma(1+s)}\right)^2|S^{n-1}|2^{2s}.
\end{eqnarray*}

Thus, \eqref{jfk} is equivalent to
\[(n+2s)\frac{2^{-2s}\Gamma(n/2)}{\Gamma\left(\frac{n+2s}{2}\right)\Gamma(1+s)} \frac12\frac{\Gamma(n/2)\Gamma(1+s)}{\Gamma(n/2+1+s)}=
c_3\left(\frac{2^{-2s}\Gamma(n/2)}{\Gamma\left(\frac{n+2s}{2}\right)\Gamma(1+s)}\right)^22^{2s}.\]
Hence, after some simplifications,
\[c_3=\frac{\Gamma(1+s)^2}{\Gamma(n/2+1+s)}\frac{n+2s}{2}\Gamma\left(\frac{n+2s}{2}\right),\]
and using that
\[z\Gamma(z)=\Gamma(1+z)\]
one finally obtains
\[c_3=\Gamma(1+s)^2,\]
as before.
\end{rem}

\section*{Acknowledgements}

The authors thank Xavier Cabr\'e for his guidance and useful discussions on the topic of this paper.


\begin{thebibliography}{00}

\bibitem{AAR} G. E. Andrews, R. Askey, R. Roy, \emph{Special Functions}, Cambridge University Press, 2000.

\bibitem{B} B. C. Berndt, \emph{Ramanujan's Notebooks, Part II}, Springer-Verlag, 1989.

\bibitem{BGR} K. Bogdan, T. Grzywny, M. Ryznar, \emph{Heat kernel estimates for the fractional Laplacian with Dirichlet conditions}, Ann. of Prob. 38 (2010), 1901-1923.

\bibitem{BCP} C. Brandle, E. Colorado, A. de Pablo, \emph{A concave-convex elliptic problem involving the fractional laplacian},  To appear in Proc. Roy. Soc. Edinburgh Sect. A.

\bibitem{CC} X. Cabr\'e, E. Cinti, \emph{Sharp energy estimates for nonlinear fractional diffusion equations}, preprint.

\bibitem{CT} X. Cabr\'e, J. Tan, \emph{Positive solutions of nonlinear problems involving the square root of the Laplacian}, Adv. Math. 224 (2010), 2052-2093.

\bibitem{CRS} L. Caffarelli, J. M. Roquejoffre, Y. Sire, \emph{Variational problems in free boundaries for the fractional Laplacian}, J. Eur. Math. Soc. 12 (2010), 1151-1179.

\bibitem{CS} L. Caffarelli, L. Silvestre, \emph{Regularity theory for fully nonlinear integro-differential equations}, Comm. Pure Appl. Math. 62 (2009), 597-638.

\bibitem{CSext} L. Caffarelli, L. Silvestre, \emph{An extension problem related to the fractional Laplacian}, Comm. Partial Differential Equations 32 (2007), 1245-1260.

\bibitem{CLO} W. Chen, C. Li, B. Ou, \emph{Classification of solutions to an integral equation}, Comm. Pure Appl. Math. 59 (2006), 330-343.

\bibitem{EG} L. C. Evans, R. F. Gariepy, \emph{Measure Theory And Fine Properties Of Functions}, Studies in Advanced Mathematics, CRC Press, 1992.

\bibitem{FW} M. M. Fall, T. Weth, \emph{Nonexistence results for a class of fractional elliptic boundary value problems}, arXiv:1201.4007v1.

\bibitem{G} R. K. Getoor, \emph{First passage times for symmetric stable processes in space}, Trans. Amer. Math. Soc. 101 (1961), 75–90.

\bibitem{GT} D. Gilbarg, N. S. Trudinger, \emph{Elliptic Partial Differential Equations Of Second Order}, Grundlehren der Mathematischen Wissenschaften, Vol. 224. Springer-Verlag, Berlin-New York, 1977.

\bibitem{PS} A. de Pablo, U. S\'anchez, \emph{Some Liouville-type results for a fractional equation}, preprint.

\bibitem{P} S. I. Pohozaev, \emph{On the eigenfunctions of the equation $\Delta u + \lambda f(u) = 0$}, Dokl. Akad. Nauk SSSR 165 (1965), 1408-1411.

\bibitem{RS-CRAS} X. Ros-Oton, J. Serra, \emph{Fractional Laplacian: Pohozaev identity and nonexistence results}, C. R. Math. Acad. Sci. Paris 350 (2012), 505-508.

\bibitem{RS} X. Ros-Oton, J. Serra, \emph{The Dirichlet problem for the fractional Laplacian: regularity up to the boundary}, arXiv:1207.5985.

\bibitem{SV} R. Servadei, E. Valdinoci, \emph{Mountain pass solutions for non-local elliptic operators}, J. Math. Anal. Appl. 389 (2012), 887-898.

\end{thebibliography}
\end{document}